\theoremstyle{remark}
\newtheorem{para}{\bf}[section]
\theoremstyle{definition}
\theoremstyle{plain}
\newtheorem{thm}[para]{Theorem}
\newtheorem{lemma}[para]{Lemma}
\newtheorem{cor}[para]{Corollary}
\newtheorem{prop}[para]{Proposition}
\newenvironment{numequation}
{\addtocounter{para}{1}\begin{equation}}{\end{equation}}
\newcommand{\bbF}{{\mathbb F}}
\newcommand{\bbN}{{\mathbb N}}
\newcommand{\bbQ}{{\mathbb Q}}
\newcommand{\bbR}{{\mathbb R}}
\newcommand{\bbZ}{{\mathbb Z}}
\newcommand{\bT}{{\bf T}}
\newcommand{\br}{{\bf r}}
\newcommand{\cA}{{\mathcal A}}
\newcommand{\cL}{{\mathcal L}}
\newcommand{\sM}{{\mathscr M}}
\newcommand{\tA}{\tilde{A}}
\newcommand{\tgA}{\tilde{A}_\bullet}
\newcommand{\Aleq}{A_{\leq\gamma}}
\newcommand{\Aeq}{A_{<\gamma}}
\newcommand{\tgB}{\tilde{B}_\bullet}\newcommand{\tgellB}{(\tilde{B_\ell})_\bullet}
\newcommand{\tk}{\tilde{k}}\newcommand{\tgk}{\tilde{k}_\bullet}
\newcommand{\tell}{\tilde{\ell}}\newcommand{\tgell}{\tilde{\ell}_\bullet}
\newcommand{\ta}{\tilde{a}}
\newcommand{\Aut}{{\rm Aut}}
\newcommand{\Cl}{{\rm Cl}}
\newcommand{\Frac}{{\rm Frac}}
\newcommand{\Gal}{{\rm Gal}}
\newcommand{\Hom}{{\rm Hom}}
\newcommand{\Pic}{{\rm{Pic}}}
\newcommand{\Spec}{{\rm Spec}}
\newcommand{\dv}{{\rm div}}
\newcommand{\car}{\stackrel{\cong}{\longrightarrow}}
\newcommand{\De}{\partial}
\newcommand{\aut}{{\rm Aut}\hskip3pt B_S}
\newcommand{\autl}{{\rm Aut}\hskip3pt B_{\ell}}
\newcommand{\autgrl}{{\rm Aut}^{\rm gr}\tgellB}
\newcommand{\autgrlo}{{\rm Aut}^{\rm gr}B_{\ell}}
\newcommand{\vg}{|\ell^\times|/|k^\times|}
\newcommand{\ellc}{\ell^{\circ}}\newcommand{\ellcc}{\ell^{\circ\circ}}
\newcommand{\tn}{\langle n\rangle}
\newcommand{\RA}{R(A)_\bullet}
\newcommand {\AUT}{{\cA}}
\begin{document}

\title{Forms of an affinoid disc and ramification}
\author{Tobias Schmidt}
\address{Institut f\"ur Mathematik, Humboldt-Universit\"at zu Berlin, Rudower Chaussee 25, D-12489 Berlin, Germany}
\email{ Tobias.Schmidt@math.hu-berlin.de}
\date{}
\maketitle

\scriptsize
%\begin{center} PRELIMINARY VERSION \end{center}
\normalsize

\normalsize

\begin{abstract}
Let $k$ be a complete nonarchimedean field and let $X$ be an
affinoid closed disc over $k$. We classify the tamely ramified
twisted forms of $X$. Generalizing classical work of P. Russell on
inseparable forms of the affine line we construct explicit
families of wildly ramified forms of $X$. We finally compute the
class group and the Grothendieck group of forms of $X$ in certain
cases.

\end{abstract}

%\tableofcontents

\vskip30pt

\section{Introduction}

\vskip8pt

Let $k$ be a field complete with respect to a nontrivial
nonarchimedean absolute value and let $X=X(r)$ be an affinoid
closed disc over $k$ of some real radius $r>0$. A form of $X$ is
an isomorphism class of a $k$-affinoid space that becomes
isomorphic to $X$ over some complete field extension
$k\subseteq\ell$. In this paper we prove some classification
results for such forms. Let $\AUT$ be the automorphism functor of
$X$.

\vskip8pt

%Let $B=k\{r^{-1}T\}$ be the affinoid algebra of $X$.
Let $k\subseteq\ell$ be an extension and let $H^1(\ell/k,\AUT)$ be
the corresponding pointed set of forms of $X$. The behavior of the
latter set depends crucially on the ramification properties of the
extension $k\subseteq\ell$. To simplify things in this
introduction, let us assume for a moment that this extension is
finite and Galois. If $k\subseteq k^{\rm tr}\subseteq\ell$ denotes
the maximal tamely ramified subextension we have a short exact
sequence
$$ 1\longrightarrow  H^1(k^{\rm tr}/k, \AUT) \longrightarrow
H^1(\ell/k, \AUT)\longrightarrow H^1(\ell/k^{\rm tr},
\mathcal{A}_{k^{\rm tr}}).$$ %and the map $H^1(k^{\rm tr}/k, {\bf
%Aut}\; X)\rightarrow H^1(\ell/k, {\bf Aut}\; X)$ is injective.
Here $\mathcal{A}_{k^{\rm tr}}$ denotes the automorphism functor
of the base change $X\otimes k^{\rm tr}$. We propose to study the
two outer terms in the sequence. So suppose that $k\subseteq\ell$
is tamely ramified. We establish a canonical bijection of pointed
sets
$$ |\ell^{\times}|/|k^\times|\car H^1(\ell/k, \AUT),\hskip10pt {\rm class ~of~ |a|}\mapsto {\rm class~ of~}
X(r|a|)$$ where $X(r|a|)$ denotes the closed disc over $k$ of
radius $r|a|$. In particular, there are no unramified forms of
$X$. These results are in accordance with the corresponding
results for the open (poly-)disc obtained by A. Ducros
\cite{Ducros}. As in loc.cit. our proof depends on the theory of
graded reduction as introduced into analytic geometry by M. Temkin
\cite{TemkinII}. For example, any affinoid $k$-algebra $A$
leads, in a functorial way, to a $\bbR_+^\times$-graded algebra
$\tgA$ over the graded field
$\tgk$ (a graded field is a graded
ring in which any nonzero homogeneous element is invertible). It is induced by the spectral semi norm filtration on $A$.
In particular, the homogeneous degree $1$ part $\tA_1$ equals
the usual reduction of $A$. %as well as the graded commutative
%algebra developed afterwards by himself and B. Conrad
%(\cite{ConradTemkin},\cite{ConradTemkinAnalytification}).
These graded rings are not local and so the local arguments of
\cite{Ducros} do not apply in our affinoid situation. In turn, we
only deal with $1$-dimensional spaces which makes the functor
$\cA$ more accessible to computation. Since $k\subseteq\ell$ is
tamely ramified, the extension between graded fields
$\tgk\subseteq\tgell$ is Galois with Galois group isomorphic to
$\Gal(\ell/k)$. We then use Galois descent
properties of the automorphism functors of $X$ and its graded
reduction to establish the result. For more information we refer
to the main body of the text.

%For example, there is a functor from reduced affinoid $k$-algebras
%to $\bbR^*_+$-graded algebras given by {\it graded reduction}. It
%behaves well with respect to tamely ramified base change.

\vskip8pt

We turn to the case of wild ramification. Here, we do not obtain a
complete picture. As a starting point, if $k\subseteq\ell$ is a
wildly ramified extension, its graded reduction
$\tgk\subseteq\tgell$ is purely inseparable. We show that the
classical construction of purely inseparable forms of the additive
group by P. Russell \cite{Russell} has a version over graded
fields. Each of these graded forms can then be lifted to a form of
the additive group on the analytic space $X$. This provides an
abundance of forms of $X$. Following \cite{KMT} we tentatively
call these forms {\it of Russell type}. Let us give more details
in the simplest case where $X$ is actually the unit disc. Let
$p=\;{\rm char}~\tk_1>0$ and $\tgk^{\rm alg}$ denote a graded
algebraic closure of the graded field $\tgk$. Let $k\subseteq\ell$
be an extension such that $(\tgk)^{p^{-n}}\subseteq\tgell$ for
some $n\geq 1$. Let $\tk_1[F]$ be the endomorphism ring of the
additive group over $\tk_1$ where $F$ is the Frobenius morphism.
The image $U_n$ in the quotient $\tk_1[F]/(F^n)$ of the
multiplicative monoid of separable endomorphisms is a group. Let
$G_n=U_n\times \tk_1^\times$ be the direct product. In
\cite{Russell} the author constructs an explicit action of $G_n$
on the pointed set $U_n$. We deduce a canonical injection of
pointed sets
$$U_n/G_n\hookrightarrow H^1(\ell/k, \AUT)$$
given by mapping the residue class mod $(F^n)$ of a separable
endomorphism $\sum_{i=1,...,m} a_iF^{i}$ to the isomorphism class
of the closed subgroup of the two dimensional additive group $X^2$
cut out by the equation
$$ T_2^{p^n}=a_0T_1+a_1T_1^p+\cdot\cdot\cdot +a_mT_1^{p^{m}}.$$
Here, $T_i$ are two parameters on $X^2$. For example, $U_1/G_1$
equals the quotient of a certain $\tk_1^\times$-action on an
infinite direct sum of copies of the space $\tk_1/\tk_1^p$. For
more details we refer to loc.cit. and the main body of our text.

\vskip8pt

The forms of the additive group on $X$ of Russell type have
geometrically reduced graded reduction, i.e. $\tgA\otimes
\tgk^{\rm alg}$ is reduced where $A$ denotes the affinoid algebra
of the form. In loc.cit. it is shown that there are many
inseparable forms of the affine line that fail to have a group
structure. In this light it is likely that, dropping the group
structure, there are many more wildly ramified forms of the space
$X$ with geometrically reduced graded reduction. In the final part
of our article we are concerned with basic invariants of such
forms such as the Picard group and the Grothendieck group. We work
under the assumptions that $k$ is discretely valued and that all
affinoid spaces are strict. These restrictions are technicalities
and should not be essential in the end. Let $Y$ be a wildly
ramified form of $X$ with geometrically reduced graded reduction.
Let $A$ be its affinoid algebra. Serre's theorem from algebraic
$K$-theory \cite{Bass} implies
$K_0(A)=\bbZ\oplus \Pic(A)$ (and this holds for {\it any}
form of $X$ regardless of ramification and reduction properties). We then use a
version of the $K_0$-part of Quillen's theorem \cite{LVOVB} to obtain a canonical isomorphism $\Pic(A)\simeq
\Pic(\tgA).$ Here, $\tgA$ is viewed as an abstract ring, i.e. we
forget the gradation here and in the following. Since $\tgA$ is
geometrically reduced, it equals a form of the affine line
relative to the ring extension $\tgk\subseteq\tgell$. Let $p^n$ be
the degree of the latter (finite free) extension. A choice of
uniformizer $\varpi\in \ell$ induces identifications
$$ \tgk=\tk_1[t^{\pm p^n}] \hskip10pt \subseteq \hskip10pt \tgell=\tk_1[t^{\pm 1}]$$
with rings of Laurent polynomials. We then have the classical
standard higher derivation associated with this $p$-radical
extension \cite{Jacobson}. To allow for $n>1$ we build on K. Baba's generalization to
higher exponents \cite{Baba} of P. Samuel's classical $p$-radical descent theory \cite{Samuel2} and give a fairly explicit
description of $\Pic(\tgA)$ in terms of logarithmic derivatives. We
deduce that the abelian group $\Pic(A)$ always has exponent $p^n$.
We also deduce a criterion when $\Pic(A)$ is a cyclic group and
thus a finite cyclic $p$-group. In this case, a generator is given
by the logarithmic derivative of a parameter on the affine line.
We discuss this criterion for forms of Russell type.

It is not unlikely that one may obtain more precise results for
the Picard group by developing a graded version of $p$-radical descent theory and then incorporate the gradings into all our arguments.
We leave this as an open question for future work.

\vskip8pt

We assemble some notions and results of graded commutative algebra
in an appendix.

%\vskip8pt

%This work is an expansion of the former preprint {\it Every
%unramified form of a nonarchimedean closed disc is trivial}.

~\\{\it Acknowledgement.} I thank Michael Temkin and Brian
Conrad for explaining to me some points in analytic geometry.

\section{Tamely ramified forms}

\subsection{$G$-groups}

A reference for the following is \cite[I.\S5]{SerreG}. Let $G$ be
a finite group. A {\it $G$-group} is a set $A$ with a $G$-action
together with a group structure which is invariant under $G$ (i.e.
$g(aa')=g(a)g(a')$). Let $A$ be a $G$-group. We denote by $A^G$ or
$H^0(G,A)$ the subgroup of $A$ consisting of elements $a$ with
$g(a)=a$ for all $g\in G$. A {\it (1-)cocycle of $G$ in $A$} is a map
$g\mapsto a_g$ from $G$ to $A$ such that
$$a_{gh}=a_g \hskip3pt g(a_h)$$
for all $g,h\in G$. Two cocycles $a_g$ and $a'_g$ are {\it
cohomologous} if there is $b\in A$ such that $ba_g=a'_{g}g(b)$ for
all $g\in G$. This is an equivalence relation on the set of
cocycles and the set of equivalence classes is denoted by
$H^1(G,A)$. The set $H^1(G,A)$ is a {\it pointed set}, i.e. a set
with a distinguished element, namely the class of the trivial
cocycle $a_g=1$ for all $g\in G$. A {\it morphism of pointed sets}
is a map preserving distinguished elements. There is an obvious
notion of kernel and exact sequence for pointed sets and their
morphisms.

The sets $H^0(G,A)$ and $H^1(G,A)$ are functorial in $A$ and
coincide with the usual cohomology groups of dimension $0$ and $1$
in case $A$ is abelian.

\vskip8pt

Given a normal and $G$-invariant subgroup $B$ of $A$ the quotient $C=A/B$ is a
$G$-group and there is an exact sequence of pointed sets
\begin{numequation}\label{equ-longexact} 1\longrightarrow B^G\longrightarrow A^G\longrightarrow
C^G\longrightarrow H^1(G,B)\longrightarrow H^1(G,A)\longrightarrow
H^1(G,C).\end{numequation}

Given a normal subgroup $H\subseteq G$ there is an exact sequence
of pointed sets
\begin{numequation}\label{equ-longexactII} 1\longrightarrow H^1(G/H,A^H)\longrightarrow H^1(G,A)\longrightarrow
H^1(H,A)\end{numequation} and the map $H^1(G/H,A^H)\rightarrow
H^1(G,A)$ is injective.

\subsection{Twisted forms and Galois
cohomology}\label{subsection-twisted}

A reference for the following is \cite[II.\S9]{KnusOjanguren}.
Consider an extension $R\subseteq S$ of commutative rings and a
finite subgroup $G$ of automorphisms of the $R$-algebra $S$. We
assume that the extension is {\it Galois} with Galois group $G$.
This means that $S$ is a finitely generated projective $R$-module
whose endomorphism ring ${\rm End}_R(S)$ admits a basis as (left)
$S$-module consisting of $\sigma\in G$. The table of
multiplication is therefore
$(s\sigma)(t\tau)=s\sigma(t)\sigma\tau$ for all $s,t\in S$ and
$\sigma,\tau\in G$.

Let $B$ be an $R$-algebra and let $B_S:=S\otimes_R B$. The ring
$B_S$ has the obvious $G$-action induced by $g(s\otimes
b)=g(s)\otimes b$. Let $\aut$ be the group of automorphisms of the
$S$-algebra $B_S$. Then $\aut$ becomes a $G$-group via
$g.\alpha=g\alpha g^{-1}$, i.e.
$$(g.\alpha)(s\otimes b)=g(\alpha(g^{-1}(s)\otimes b))$$
where $\alpha\in\aut$. Let ${\bf \Aut}\hskip2pt B$ be the
automorphism functor of $B$.

\vskip8pt

Let now $(A)$ be an isomorphism class of $R$-algebras and $A$ a
representative of this class. We call $(A)$ a {\it twisted form of
$B$ with respect to $R\subseteq S$} if there is an isomorphism of
$S$-algebras
$$\beta: S\otimes_R A\car S\otimes_R B.$$ Let $H^1(S/R, {\bf \Aut}\hskip2pt B)$ be the set of such
forms (by abuse of language we will also call {\it forms}
the representatives $A$ of a form $(A)$). It is a pointed set,
the distinguished element being $(B)$. Given a form $(A,\beta)$
the map $g\mapsto\theta_g$ where
$$\theta_g=\beta g\beta^{-1}g^{-1}$$
is a cocycle of $G$ in $\aut$. If $(A',\beta)$ is a different
representative of $(A)$ with cocycle $\theta_g'$, one has with
$\alpha:=\beta'\beta^{-1}$ that
$$ \alpha\theta_g=
\beta'g\beta^{-1}g^{-1}=\beta'g\beta'^{-1}g^{-1}
g\beta'\beta^{-1}g^{-1}=\theta_g'g.\alpha$$ for $g\in G$, i.e.
$\theta_g$ and $\theta'_g$ are cohomologous. We obtain a bijection
of pointed sets \begin{numequation}\label{equ-isoclasses}H^1(S/R,
{\bf \Aut}\hskip2pt B)\car H^1(G,\aut)\end{numequation} according
to loc.cit., II. Thm. 9.1. The inverse map is explicitly given as
follows. Let $\theta_g$ be a cocycle of $G$ in $\aut$. Given $g\in
G$ we let $\bar{g}:=\theta_g g.$ Hence any $\bar{g}$ acts
$S$-semilinearly on $B_S$. Let
$$A:=\{x\in B_S: \bar{g}(x)=x {\rm ~for~all~}g\in G\}.$$
Then $A$ is a form of $B$ the isomorphism $\beta:A_S\car B_S$
being induced by the inclusion $A\subset B_S$. The class $(A)$ is
the preimage of the class of $\theta_g$.

\subsection{Graded forms}
Let $\Gamma=\bbR_{+}^\times$ (or any other commutative
multiplicative group). The reader is referred to the appendix for
all occuring notions from graded ring theory that we will use in
the following. All occuring graded rings will be $\Gamma$-graded
rings and so we will frequently omit the group $\Gamma$ from the
notation. Let $k$ be a graded field and denote by $\rho:
k^\times\rightarrow\Gamma$ its grading.

\vskip8pt

Let $k\subseteq\ell$ be an extension (finite or not) of graded
fields and let $B$ be a graded $k$-algebra. Then
$B_\ell=\ell\otimes_k B$ is a graded $\ell$-algebra with respect
to the tensor product grading

$$(B_\ell)_\gamma =\sum_{\delta\tau=\gamma} \ell_\delta \otimes
B_\tau$$ for $\gamma\in\Gamma$. Let ${\bf \Aut}^{\rm gr}\; B$ be
the automorphism functor of $B$. A graded $k$-algebra $A$ is a
{\it graded} twisted form of $B$ with respect to $k\subseteq\ell$
if there is an isomorphism (of degree $1$) of graded
$\ell$-algebras $A_\ell\car B_\ell$. As above we then have the
pointed set $H^1(\ell/k, {\bf \Aut}^{\rm gr} B)$ of graded forms of
$B$ relative to the extension $\ell/k$. Let $\autgrlo$ be the
group of automorphisms of the graded $\ell$-algebra $B_\ell$. If
$k\subseteq\ell$ is a finite Galois extension with group $G$, then
$\autgrlo$ becomes a $G$-group in the same manner as above. In
this case, we have the cohomology set $H^1(G,\autgrlo)$ and the
isomorphism of pointed sets
\begin{numequation}\label{equ-grisoclasses}H^1(\ell/k, {\bf
\Aut}^{\rm gr} B)\car H^1(G,\autgrlo)\end{numequation} constructed
as in the ungraded case.

\subsection{Graded reductions}\label{subsec-gradedreductions}

%We keep the assumptions on $\Gamma$ but suppose additionally that
%$\Gamma$ is totally ordered. We consider the totally ordered
%monoid $\bGamma:=\Gamma\cup\{0\}$ where $0\cdot\gamma:=0$ and
%$0<\gamma$ for all $\gamma\in\Gamma$.
We let $\Gamma=\bbR^\times_+$ have its usual total ordering. Let
$k$ be a field endowed with a nontrivial nonarchimedean absolute
value $|.|: k\rightarrow\mathbb{R}_+$. Let $A$ be a $k$-algebra
with a submultiplicative nonarchimedean seminorm $|.|:
A\rightarrow\mathbb{R}_+$ extending the absolute value on $k$ (and
therefore denoted by the same symbol). We let $\tgA$ be the graded
ring equal to
$$\tgA:=\oplus_{\gamma\in\Gamma}\hskip3pt\Aleq/\Aeq$$
where $\Aleq$ and $\Aeq$ consists of the elements $a\in A$ with
$|a|\leq\gamma$ and $|a|<\gamma$ respectively. Following
\cite{Ducros} we call $\tgA$ the {\it graded reduction} of $A$ in
the sense of M. Temkin \cite{TemkinI},\cite{TemkinII}. The
homogeneous part $\tA_1$ is a subring, the {\it residue ring} of
$A$ (in the sense of \cite{BGR}).

\vskip8pt

If $r\in\Gamma$ and $a\in A$ is an element with $|a|\leq r$ we
denote by $\ta_r$ the corresponding element in $A_{\leq
r}/A_{<r}$. If $|a|=r$ we simply write $\ta$ instead of $\ta_r$
and call $\ta$ the {\it principal symbol} of $a$. If $|a|=0$ we
put $\ta=0$.

\vskip8pt

In the case $A=k$ the graded reduction $\tgk$ is a graded field:
indeed, any homogeneous nonzero element is a principal symbol
$\ta$ and the principal symbol of $a^{-1}$ provides the inverse.
It is called the {\it graded residue field} of $k$. The
homogeneous part $\tk_1$ is a field, the {\it residue field} of
$k$ (in the sense of \cite{BGR}).

\vskip8pt

Suppose $k\subseteq\ell$ is a finite extension between
nonarchimedean fields where the absolute value on $\ell$ restricts
to the one on $k$. Let $$e:=(|\ell^\times|:|k^\times|)\hskip10pt
{\rm and}\hskip10pt f:=[\tell_1:\tk_1].$$ Then
$\tgk\subseteq\tgell$ is a finite extension of graded fields with

$$ef=[\tgell:\tgk]\leq [\ell:k]$$
as follows from \cite[Prop. 2.10]{Ducros}.
 %and \cite[Lem. 5.3.1]{ConradTemkinAnalytification}).

\vskip8pt Remark: According to \cite[Prop. 3.6.2/4]{BGR} the
inequality $[\tgell:\tgk]\leq [\ell:k]$ being an equality is
equivalent to the extension $\ell$ being {\it $k$-cartesian} (in
the sense of loc.cit., Def. 2.4.1/1 ). In loc.cit., the ground
field $k$ is defined to be {\it stable} if this property holds for
any finite extension of $k$. Any nonarchimedean field complete
with respect to a discrete valuation is stable (loc.cit., Prop.
3.6.2/1). Any nonarchimedean field which is spherically complete
(or equivalently, maximally complete \cite{Kaplansky}) is
stable \cite[Prop. 3.6.2/12]{BGR}. Finally, any nonarchimedean
complete field which is algebraically closed is stable (loc.cit.,
Prop. 3.6.2/12).

\vskip8pt

Denote by $p$ the {\it characteristic exponent} of the field $k_1$, i.e.
$p={\rm char}\hskip2pt k_1$ in case ${\rm char}\hskip2pt k_1>0$
and $p=1$ else. The extension $k\subseteq\ell$ is called {\it
tamely ramified} if the residue field extension
$\tk_1\subseteq\tell_1$ is separable and $e$ is prime to $p$. This
is equivalent to the extension of graded fields
$\tgk\subseteq\tgell$ being separable \cite[Prop. 2.10]{Ducros}.
We call $k\subseteq\ell$ {\it wildly ramified} if the residue
field extension $\tk_1\subseteq\tell_1$ is purely inseparable and
$e$ is a $p$-power.
\begin{lemma}\label{lem-purelyinsep}
The extension $k\subseteq\ell$ is wildly ramified if and only if
$\tgk\subseteq\tgell$ is purely inseparable.
\end{lemma}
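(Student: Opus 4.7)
The plan is to verify the two implications directly from the definitions, using two elementary observations about the graded reduction: the homogeneous piece $(\tgk)_\gamma$ is nonzero precisely when $\gamma\in|k^\times|$, and $(\tgk)_1=\tk_1$ (and likewise for $\ell$). These two facts let the two halves of the definition of wildly ramified, namely the residue field condition and the condition on $e$, be decoupled from (and recombined into) the single condition that $\tgk\subseteq\tgell$ be purely inseparable.

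For the implication $(\Rightarrow)$, assume $k\subseteq\ell$ is wildly ramified with $e=p^m$. Given a nonzero homogeneous element $\tilde{x}\in\tgell$ of degree $\gamma\in|\ell^\times|$, I would first raise to the $p^m$-th power so that the resulting degree $\gamma^{p^m}$ automatically lies in $|k^\times|$ (since $|\ell^\times|/|k^\times|$ has order $p^m$). Choosing $c\in k^\times$ with $|c|=\gamma^{p^m}$, the quotient $\tilde{x}^{p^m}/\tilde{c}$ is a homogeneous element of degree $1$ in $\tgell$, i.e.\ an element of $\tell_1$. Pure inseparability of $\tk_1\subseteq\tell_1$ then produces some $n\geq 0$ with $(\tilde{x}^{p^m}/\tilde{c})^{p^n}\in\tk_1\subseteq\tgk$, and multiplying back by $\tilde{c}^{p^n}\in\tgk$ yields $\tilde{x}^{p^{m+n}}\in\tgk$, as required.

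For the converse, assuming $\tgk\subseteq\tgell$ is purely inseparable, the idea is to extract the two pieces of the wild ramification condition one at a time. Restricting to degree $1$ is immediate: a $p^n$-th power of an element of $\tell_1$ stays in degree $1$ and eventually lies in $(\tgk)_1=\tk_1$, so $\tk_1\subseteq\tell_1$ is purely inseparable. For the value group, given $\gamma\in|\ell^\times|$ pick $b\in\ell^\times$ with $|b|=\gamma$; pure inseparability gives $\tilde{b}^{p^n}\in\tgk$ for some $n$, and since this is a nonzero homogeneous element of degree $\gamma^{p^n}$, the support observation forces $\gamma^{p^n}\in|k^\times|$. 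Hence $|\ell^\times|/|k^\times|$ is a $p$-primary torsion group, and since the extension $\ell/k$ is finite we have $e\leq [\ell:k]<\infty$, so $e$ is a power of $p$.

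There is no serious obstacle here; the argument is essentially bookkeeping once the definitions are unfolded. The only point requiring care is the two observations about the graded reduction recalled at the start (support in degrees $|k^\times|$ and the identification $(\tgk)_1=\tk_1$), which are precisely the hinges on which both directions turn.
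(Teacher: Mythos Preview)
Your proof is correct and follows essentially the same approach as the paper's: both directions are handled by splitting into the value-group piece and the residue-field piece, raising to a suitable $p$-power to land in $|k^\times|$ and then invoking pure inseparability of $\tk_1\subseteq\tell_1$. The only cosmetic difference is that for the converse the paper deduces that $e$ is a $p$-power from the formula $ef=[\tgell:\tgk]$, whereas you argue directly that $|\ell^\times|/|k^\times|$ is a finite $p$-group; both are immediate.
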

\begin{proof}
If $\tk_1\subseteq\tell_1$ is purely inseparable, so is the
extension $\tgk\subseteq \tell_1\cdot \tgk$. Take
$\tilde{x}\in\tgell$ for some $x\in\ell^\times$. If $e=p^n$, say,
then $|x^{p^n}|\in |k^\times|$. This means there is $z\in\tell_1$
and a homogeneous nonzero $w\in\tgk$ such that
$\tilde{x}^{p^n}=zw$ and therefore $\tilde{x}$ is purely
inseparable over $\tell_1\cdot\tgk$. Conversely, suppose that
$\tgk\subseteq\tgell$ is purely inseparable. The homogeneous
minimal polynomial $f$ over $\tgk$ of any nonzero homogeneous
element $x$ of $\tgell$ is purely inseparable. If $x\in\tell_1$,
then $f$ has coefficients in $\tk_1$. Hence $\tell_1$ is purely
inseparable over $\tk_1$. Since $[\tgell:\tgk]$ is a $p$-power, so
is $e$.
\end{proof}
Remark: Let $\ell/k$ be an arbitrary $k$-cartesian finite
extension. If $f=1$ (e.g. if $\ell/k$ is wildly ramified with
$\tk_1$ being perfect), then $[\ell:k]=e$ and so $\ell/k$ is {\it
totally ramified}. %We will not make much use of this special case.

\subsection{Tame ramification and Galois cohomology}
%We let $\Gamma$ be a commutative multiplicative group which is
%torsionfree and divisible (i.e. a $\bbQ$-vector
%space).\footnote{Such a group can be made into a totally ordered
%group, cf. \cite{LeviOrdGroups}. We fix such an order on
%$\Gamma$.}
Let $k$ be a field complete with respect to a nontrivial
nonarchimedean absolute value $|.|$. Let $p$ be the characteristic
exponent of the residue field $\tk_1$ of $k$.

\vskip8pt

We will work relatively to a finite tamely ramified field extension
$k\subseteq\ell$ which is Galois with group $G$. Let $n=[\ell:k]$.
The finite extensions between the fields $\tk_1\subseteq\tell_1$
and the graded fields $\tgk\subseteq\tgell$ are then normal
\cite[Prop. 2.11]{Ducros} and therefore Galois and have degrees
$f$ and $n$ respectively. Since $G$ acts by isometries on $\ell$
we have two natural homomorphisms
\begin{numequation}\label{equ-galoisident}G=\Gal(\ell/k)\car \Gal(\tgell/\tgk)\longrightarrow
G(\tell_1/\tk_1)\end{numequation} for the corresponding Galois
groups. Both maps are surjective and the first map is even an
isomorphism (loc.cit., Prop. 2.11). Let $I=I(\ell/k)$ be the {\it
inertia subgroup}, i.e. the kernel of the composite homomorphism.
Then $\Gal(\ell/k)/I\car \Gal(\tell_1/\tk_1)$ and $n=ef$ implies $\#
I=e$. Hence $I$ has no $p$-torsion which implies the familiar
isomorphism
\begin{numequation}\label{equ-inertia}I\car {\rm \Hom}(|\ell^\times|/|k^\times|,
\tell_1^\times),\hskip3pt g\mapsto \psi_g\end{numequation} where
$\psi_g(\gamma\hskip3pt {\rm mod}\hskip3pt |k^\times|)$ equals the
reduction of $\frac{g(x)}{x}$ for some element $x\in\ell^\times$
of absolute value $\gamma$ (loc.cit., Prop. 2.14). In particular,
$I$ is abelian.

\vskip8pt

In the following we will collect some results on cohomology groups
associated with the ring of integers $\ell^\circ$ in $\ell$ and
its residue field $\tell_1$. These results are certainly
well-known but we give complete proofs for lack of suitable
reference.

\begin{lemma}\label{lem-normalbasis}
One has $H^n(G,\tell_1)=0$ for all $n\geq 1$.
\end{lemma}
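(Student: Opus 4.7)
The plan is to combine the normal basis theorem for the Galois extension $\tk_1\subseteq\tell_1$ with the Hochschild--Serre spectral sequence for $1\to I\to G\to G/I\to 1$.

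First, I would reduce the problem from $G$ to the quotient $G/I$. Since $I$ is by definition the kernel of the composite map in \eqref{equ-galoisident}, it acts trivially on $\tell_1$. Hence for $q\geq 1$, multiplication by $\#I=e$ is simultaneously zero on $H^q(I,\tell_1)$ (by the standard corestriction--restriction argument on an abelian group of order $e$) and invertible on $\tell_1$ (because $e$ is prime to the characteristic exponent $p$ of $\tk_1$ by tameness). Therefore $H^q(I,\tell_1)=0$ for all $q\geq 1$, and the Hochschild--Serre spectral sequence
\[
E_2^{p,q}=H^p(G/I,H^q(I,\tell_1))\Rightarrow H^{p+q}(G,\tell_1)
\]
degenerates at $E_2$ and yields a canonical isomorphism $H^n(G,\tell_1)\simeq H^n(G/I,\tell_1)$ for every $n\geq 1$.

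Next, I would handle the $G/I$-cohomology. Via \eqref{equ-galoisident} the quotient $G/I$ identifies with the Galois group $\Gal(\tell_1/\tk_1)$ of the (separable, since the extension is tame) residue field extension. The normal basis theorem then furnishes a $\tk_1$-basis of $\tell_1$ permuted simply transitively by $G/I$, so that $\tell_1$ is a free $\tk_1[G/I]$-module of rank one, i.e.\ an induced (equivalently, coinduced, since $G/I$ is finite) module. Induced modules have vanishing cohomology in positive degrees, giving $H^n(G/I,\tell_1)=0$ for all $n\geq 1$.

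Combining the two steps yields $H^n(G,\tell_1)=0$ for $n\geq 1$, as required. The only nontrivial input is the Galois-theoretic fact that tameness forces the residue extension to be separable and Galois with group $G/I$ — which is precisely what has already been recorded just before the statement via the reference to \cite[Prop.~2.11]{Ducros} — so no real obstacle arises; the argument is essentially a formal consequence of tameness together with the additive Hilbert~90/normal basis phenomenon.
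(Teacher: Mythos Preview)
Your proof is correct and follows essentially the same approach as the paper: both arguments vanish $H^q(I,\tell_1)$ for $q\geq 1$ using that $\#I=e$ is invertible in $\tell_1$ (you phrase this via cor--res, the paper via semisimplicity of $\tell_1[I]$), then invoke the normal basis theorem for $\tk_1\subseteq\tell_1$ to kill $H^n(G/I,\tell_1)$, and finally combine the two via Hochschild--Serre (you use the full spectral sequence, the paper uses the equivalent higher inflation--restriction sequence, which is valid precisely because the $I$-cohomology vanishes in lower degrees).
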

\begin{proof}
We have $H^n(I,\tell_1)=0$ for all $n\geq 1$ since the order $\#I$
is invertible in $\tell_1$ and therefore the group ring
$\tell_1[I]$ is semisimple. Now $I$ acts trivially on $\tell_1$ and
the normal basis theorem \cite[Prop. X.\S1.1]{SerreL} implies
$H^n(G/I, \tell_1)=0.$ The assertion follows therefore from the
exact inflation-restriction sequence
$$ 0\longrightarrow H^n(G/I, (\tell_1)^{I})\longrightarrow
H^n(G,\tell_1)\longrightarrow H^n(I,\tell_1)$$ (loc.cit., Prop.
VII.\S6.5).
%Similarly, $H^1(I,\tell_1)={\rm
%Hom}(I,\tell_1)=0$ since the $\#I$-torsion in $\tell_1$ is zero.
%indeed, if p=1 there is no torsion at all since \tell_1 has char
%=0 if p>1 then \tell_1 has char = p>0 $ and therefore has no
%prim-to-p torsion. Since $(\# I,p)=1$ the claim follows.
\end{proof}
Let $\ell^\circ\subset\ell$ be the subring of elements $x$ with
$|x|\leq 1$. Let $G$ act trivially on the value group
$|\ell^\times|$. We have the $G$-equivariant short exact sequence
$$
1\longrightarrow(\ell^\circ)^\times\longrightarrow\ell^\times\stackrel{|.|}{\longrightarrow}
|\ell^\times|\longrightarrow 1.$$
\begin{prop}\label{prop-ram}
The above sequence induces an isomorphism $$\vg\car
H^1(G,(\ell^{\circ})^\times)$$ given by
$\gamma=|x|\mapsto\psi(\gamma)$ where
$\psi(\gamma)_g=\frac{gx}{x}$ for all $g\in G$.
\end{prop}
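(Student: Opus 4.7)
The plan is to read the statement off the long exact sequence in Galois cohomology attached to the short exact sequence of $G$-modules
$$ 1 \longrightarrow (\ell^\circ)^\times \longrightarrow \ell^\times \stackrel{|.|}{\longrightarrow} |\ell^\times| \longrightarrow 1. $$
First, I would identify the relevant invariants. Since $G$ acts trivially on $|\ell^\times|$, one has $H^0(G,|\ell^\times|)=|\ell^\times|$. Since $\ell/k$ is Galois, $H^0(G,\ell^\times)=k^\times$, and the restriction of $|.|$ to $k^\times$ has image precisely $|k^\times|$.

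Next I would invoke Hilbert's Theorem 90, which gives $H^1(G,\ell^\times)=0$ for any Galois extension $\ell/k$. The piece of the long exact sequence of interest then reads
$$ k^\times \longrightarrow |\ell^\times| \longrightarrow H^1(G,(\ell^{\circ})^\times) \longrightarrow 0, $$
from which the connecting homomorphism descends to an isomorphism $|\ell^\times|/|k^\times| \stackrel{\sim}{\longrightarrow} H^1(G,(\ell^\circ)^\times)$, as claimed.

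It remains to check that the connecting map has the explicit form stated in the proposition. By the standard recipe, given $\gamma=|x|\in|\ell^\times|$ one lifts to $x\in\ell^\times$, forms the $1$-cocycle $g\mapsto g(x)/x$ for $g\in G$, and notes that $|g(x)/x|=1$ (because $G$ acts by isometries), so that this cocycle takes values in $(\ell^\circ)^\times$. This is precisely the map $\gamma\mapsto\psi(\gamma)$ in the statement.

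No genuine obstacle arises here: the only nontrivial input is Hilbert 90 for the Galois extension $\ell/k$, and the rest is bookkeeping with the coboundary. The one point worth being careful about is that the well-definedness of the map $\gamma\mapsto[\psi(\gamma)]$ on $|\ell^\times|/|k^\times|$ (independence of the lift $x$) is already built into the general exactness argument, but it can also be verified directly: replacing $x$ by $ux$ with $|u|=1$ changes the cocycle by the coboundary $g\mapsto g(u)/u$, and multiplying $x$ by an element of $k^\times$ changes it trivially.
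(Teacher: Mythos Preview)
Your proof is correct and follows essentially the same approach as the paper: both apply the long exact cohomology sequence to the short exact sequence $1\to(\ell^\circ)^\times\to\ell^\times\to|\ell^\times|\to 1$ and invoke Hilbert's Theorem 90 to kill $H^1(G,\ell^\times)$, then identify the connecting map explicitly. Your write-up is slightly more detailed in spelling out the form of the boundary map and its well-definedness, but the argument is the same.
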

\begin{proof}
The long exact cohomology sequence gives
$$ k^\times=H^0(G,\ell^\times)\stackrel{|.|}{\longrightarrow}
H^0(G,|\ell^\times|)\stackrel{\delta}{\longrightarrow}
H^1(G,(\ellc)^\times)\longrightarrow H^1(G,\ell^\times)=1$$ where
the right hand identity is {\it Hilbert theorem 90} (e.g.
\cite[Prop. X.\S1.2]{SerreL}). The boundary map $\delta$ gives the
required map.
\end{proof}
The group $G/I$ acts on $I$ via conjugation since $I$ is abelian
and it acts on $\tell_1^\times$ via the natural action. We
therefore have an induced action of $G/I$ on the set of group
homomorphisms ${\rm \Hom}(I,\tell_1^\times)$ via
$(g.f)(h)=gf(g^{-1}h)$ for all $h\in I$.

\begin{lemma}Let $\ell^{\circ\circ}$ be the kernel of the reduction map
$\ell^\circ\rightarrow \tell_1$. There is an isomorphism
$$\vg\car {\rm \Hom}(I,\tell_1^\times)^{G/I}={\rm \Hom}(I,\tell_1^\times)$$ mapping
$\gamma=|x|$ to $g\mapsto (\psi(\gamma)_g{\rm ~mod~}\ellcc)$.
\end{lemma}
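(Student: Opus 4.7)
The plan is to construct the map directly from the bilinear pairing $\vg\times I\to \tell_1^\times$, $(|x|,g)\mapsto\overline{gx/x}$, that underlies (\ref{equ-inertia}), and then to deduce bijectivity together with the equality of invariants by finite abelian duality.

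First I will define $\phi\colon \vg \lra \Hom(I,\tell_1^\times)$ by $|x|\mapsto\bigl(g\mapsto\overline{gx/x}\bigr)$, where the overline denotes reduction $(\ell^\circ)^\times\to\tell_1^\times$. Well-definedness is immediate: since $g$ is an isometry, $gx/x\in(\ell^\circ)^\times$, and if $x'=cxu$ with $c\in k^\times$ and $u\in(\ell^\circ)^\times$, then $gx'/x'=(gx/x)(gu/u)$, whose second factor reduces to $1$ because $g\in I$ acts trivially on $\tell_1$. The same device shows that $\phi(|x|)$ is $G/I$-equivariant: for $h\in G$ one writes $h(x)=xu$ with $u\in(\ell^\circ)^\times$ (possible since $h$ is an isometry), and a short computation gives
\[ (h.\phi(|x|))(g)=\overline{gh(x)/h(x)}=\overline{(gx/x)(gu/u)}=\overline{gx/x}. \]
Hence the image of $\phi$ lies in $\Hom(I,\tell_1^\times)^{G/I}$.

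Second, I will read (\ref{equ-inertia}) as the statement that the adjoint $\psi\colon I\car\Hom(\vg,\tell_1^\times)$ of the pairing above is an isomorphism of finite abelian groups of order $e$. Decomposing $\vg\cong\bigoplus_i\bbZ/m_i\bbZ$ into invariant factors and noting that $|\Hom(\bbZ/m,\tell_1^\times)|=|\mu_m(\tell_1)|$, comparing orders in (\ref{equ-inertia}) forces $|\mu_{m_i}(\tell_1)|=m_i$ for each $i$; in particular $\tell_1$ contains a primitive root of unity of order $\exp(\vg)$. With this input the elementary finite-abelian-group duality gives that the evaluation map $\vg\to\Hom(\Hom(\vg,\tell_1^\times),\tell_1^\times)$ is an isomorphism, and via $\psi$ it is identified with $\phi$ itself. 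Hence $\phi$ is bijective onto the entire group $\Hom(I,\tell_1^\times)$.

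Combining the two steps, the image of $\phi$ is on the one hand contained in $\Hom(I,\tell_1^\times)^{G/I}$ and on the other equal to the whole of $\Hom(I,\tell_1^\times)$; this forces the invariants to coincide with the full group and yields the claimed isomorphism. The main obstacle I foresee is the bookkeeping in the second step: one must carefully extract from (\ref{equ-inertia}) the containment $\mu_{\exp(\vg)}\subset\tell_1$, without which the double-dual evaluation map need not be an isomorphism.
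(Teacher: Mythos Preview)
Your proposal is correct and follows essentially the same route as the paper: both arguments identify the map $\phi$ with the bidual (evaluation) map $\vg\to(\vg)^{**}$ via the isomorphism (\ref{equ-inertia}), use the cardinality equality $\#\vg=e=\#I$ to see that the bidual is bijective, and conclude that the $G/I$-invariants fill up all of $\Hom(I,\tell_1^\times)$. The only organizational difference is that the paper first checks equivariance of (\ref{equ-inertia}) and then invokes the abstract equivariance of the bidual map (for a trivially-acted-upon $M$), whereas you verify $G/I$-invariance of $\phi$ by a direct cocycle computation; your version is also more explicit about extracting $\mu_{\exp(\vg)}\subset\tell_1^\times$ from the order count, a step the paper leaves implicit in the sentence ``Since $\#\vg=e=\#I$ the bidual map is bijective.''
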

\begin{proof}
Quite generally, if $M,A$ are abelian $G$-groups where $G$ acts
trivially on $M$ and the sets of group homomorphisms $M^*:={\rm
\Hom}(M,A)$ and $M^{**}:={\rm \Hom}(M^*,A)$ have their induced
$G$-actions, then the natural bidual homomorphism $M\rightarrow
M^{**}$ is equivariant. This is elementary. Now let $M:=\vg$ and
$A:=\tell_1^\times$ and recall the isomorphism $I\car(\vg)^*$ from
(\ref{equ-inertia}). Since $\#\vg=e=\# I$ the bidual map
$\vg\car(\vg)^{**}$ is bijective. By the above remark it is
equivariant (and so the action on both sides is trivial). But
$I\car(\vg)^*$ is also equivariant. Indeed, let $g_0\in G, g\in I$
and $\gamma=|x|$. Then
$$ \frac{(g_0.g)(x)}{x}=\frac{g_0gg_0^{-1}(x)}{x}=
g_0(\frac{gg_0^{-1}(x)}{g_0^{-1}(x)})$$ and since
$\gamma=|g_0^{-1}(x)|$ passing to the reduction mod ${\rm \ellcc}$
implies
$\psi_{g_0.g}(\gamma)=g_0(\psi_g(\gamma))=(g_0.\psi_g)(\gamma).$
The bidual map gives therefore an equivariant isomorphism
$$\vg\car {\rm \Hom}(I,\tell_1^\times)$$
(and so the action on both sides is trivial). The definition of
the map (\ref{equ-inertia}) shows that this isomorphism has the
required form.
\end{proof}

\begin{prop}\label{prop-unitsvsresiduefield}
The reduction map $\ell^\circ\rightarrow \tell_1$ induces an
isomorphism $$\vg=H^1(G,(\ell^\circ)^\times)\car
H^1(G,\tell_1^\times).$$
\end{prop}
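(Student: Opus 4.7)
The plan is to sandwich the map $\alpha\colon H^1(G,(\ellc)^\times)\to H^1(G,\tell_1^\times)$ in the chain
$$\vg\;\car\;H^1(G,(\ellc)^\times)\xrightarrow{\;\alpha\;}H^1(G,\tell_1^\times)\xrightarrow{\;\beta\;}\Hom(I,\tell_1^\times)\;\stackrel{\sim}{\longleftarrow}\;\vg,$$
whose outer isomorphisms are the preceding proposition (on the left) and the preceding lemma (on the right). To conclude that $\alpha$ is an isomorphism it then suffices to show that $\beta$ is injective and that the total composite around the chain is the identity of $\vg$; the identity $\alpha=\beta^{-1}(\beta\alpha)$ will then force $\alpha$ to be bijective.

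The map $\beta$ and its injectivity will come from the inflation-restriction sequence attached to the normal subgroup $I\subset G$ acting on $\tell_1^\times$:
$$0\longrightarrow H^1(G/I,(\tell_1^\times)^I)\longrightarrow H^1(G,\tell_1^\times)\xrightarrow{\;\beta\;}H^1(I,\tell_1^\times)^{G/I}.$$
By the very definition of $I$ its action on $\tell_1$ is trivial, so $(\tell_1^\times)^I=\tell_1^\times$, and Hilbert 90 applied to the Galois extension $\tk_1\subset\tell_1$ with group $G/I$ gives $H^1(G/I,\tell_1^\times)=0$. Exploiting triviality of the $I$-action once more, $H^1(I,\tell_1^\times)=\Hom(I,\tell_1^\times)$, and by the preceding lemma the induced $G/I$-action on this $\Hom$-set is trivial. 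Hence $\beta$ is an injection $H^1(G,\tell_1^\times)\hookrightarrow\Hom(I,\tell_1^\times)$.

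For the cocycle chase, a class $\gamma=|x|\in\vg$ is sent by the preceding proposition to $[g\mapsto gx/x]$ in $H^1(G,(\ellc)^\times)$, and hence by $\alpha$ to $[g\mapsto\widetilde{gx/x}]$ in $H^1(G,\tell_1^\times)$. Restricting this cocycle to $I$ yields the homomorphism $g\mapsto\widetilde{gx/x}$, which is precisely the image $\psi(\gamma)\in\Hom(I,\tell_1^\times)$ recorded in the preceding lemma. Thus the outer composite around the chain is indeed the identity on $\vg$, and the argument concludes as above.

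The only delicate point I anticipate is verifying that the $G/I$-action on $H^1(I,\tell_1^\times)=\Hom(I,\tell_1^\times)$ produced by inflation-restriction (conjugation on $I$, natural action on $\tell_1^\times$) genuinely agrees with the action considered in the preceding lemma, so that the triviality recorded there can legitimately be invoked to land $\beta$ inside $\Hom(I,\tell_1^\times)$ rather than only its invariants.
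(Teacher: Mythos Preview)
Your proof is correct and follows essentially the same route as the paper: both use the inflation--restriction sequence for $I\lhd G$ on $\tell_1^\times$, kill the inflation term with Hilbert~90, identify the restriction target with $\Hom(I,\tell_1^\times)=\vg$ via the preceding lemma, and then check that the composite $\vg\to H^1(G,(\ellc)^\times)\to H^1(G,\tell_1^\times)\hookrightarrow\vg$ is the identity. Your flagged ``delicate point'' is not actually an obstacle: the $G/I$-action arising in inflation--restriction (conjugation on $I$, natural action on $\tell_1^\times$) is exactly the action introduced before the preceding lemma, so the triviality established there applies directly.
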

\begin{proof}
The inflation-restriction exact sequence
$$ 1\longrightarrow H^1(G/I,\tell^\times_1)\longrightarrow
H^1(G,\tell^\times_1)\longrightarrow H^1(I,\tell^\times_1)^{G/I}$$
together with the {\it Hilbert theorem 90} and the preceding lemma
yields an injection
$$ H^1(G,\tell_1^\times)\hookrightarrow {\rm
\Hom}(I,\tell_1^\times)=\vg.$$ Precomposing this injection with the
map $\vg=H^1(G,(\ell^\circ)^\times)\longrightarrow
H^1(G,\tell_1^\times)$ gives the identity on $\vg$. This yields
the assertion.
\end{proof}

\subsection{Spectral norms and affinoid algebras}
We work in the framework of Berkovich analytic spaces
\cite{Berkovichbook}. However, we deal only with affinoid
algebras and our methods are entirely algebraic. We let $k$ be a
nonarchimedean field which is complete with respect to a
nontrivial absolute value $|.|$. Let $\Gamma=\bbR^\times_+.$ %The
%aim of this subsection is to determine the tamely ramified forms
%of a $k$-affinoid disc.

\vskip8pt

In the following, graded reductions of affinoid algebras $A$ are
always computed relatively to their spectral seminorm
$$ |f|=\sup_{x\in\sM(A)} |f|_x$$
for $f\in A$ and where $\sM(A)$ denotes the corresponding affinoid
space. If the ring $A$ is reduced, this seminorm is a norm and
induces the Banach topology on $A$ \cite[Prop. 6.2.1/4 and Thm.
6.2.4/1]{BGR}.

\vskip8pt

Suppose $k\subseteq\ell$ is an extension between nonarchimedean
fields where the absolute value on $\ell$ restricts to the one on
$k$.

\begin{lemma}\label{lem-keyreduced}
Let $A$ be a reduced $k$-affinoid algebra with its spectral norm.
If the ring $\tgell\otimes_{\tgk}\tgA$ is reduced, then so is the
ring $A_\ell:=\ell\hskip2pt\hat{\otimes}_{k}\hskip2pt A$. In this
case, the graded reduction of the $\ell$-affinoid algebra $A_\ell$
equals $\tgell\otimes_{\tgk} \tgA$.
\end{lemma}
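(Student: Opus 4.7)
The plan is to endow $A\otimes_k\ell$ with the tensor-product seminorm
\[|f|_\pi=\inf\Big\{\max_i|a_i|_A|b_i|_\ell \;:\; f=\sum_i a_i\otimes b_i\Big\},\]
built from the spectral norm on $A$ (an honest norm since $A$ is reduced) and the absolute value on $\ell$, and then to compute the graded reduction of $B:=A_\ell$ after extending $|\cdot|_\pi$ continuously to the completion. Sending homogeneous pure tensors to principal symbols yields a canonical graded $\tgell$-algebra homomorphism
\[\Phi\colon\tgell\otimes_{\tgk}\tgA\longrightarrow\widetilde{B}_\bullet^{\,\pi}\]
(with $\widetilde{B}_\bullet^{\,\pi}$ the graded reduction with respect to $|\cdot|_\pi$), which is surjective by the very definition of $|\cdot|_\pi$.

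For injectivity I would use that $\tgell$ is a graded vector space over the graded field $\tgk$ (appendix): pick a homogeneous basis $(\tilde b_j)_{j\in J}$ of $\tgell/\tgk$ and lift it to $b_j\in\ell^\times$ with $|b_j|_\ell=\deg(\tilde b_j)$. Linear independence of the $\tilde b_j$ over $\tgk$ amounts to the ultrametric orthogonality identity $|\sum_j c_j b_j|_\ell=\max_j|c_j||b_j|_\ell$ for $c_j\in k$, and an elementary estimate using uniqueness of basis expansions in $A\otimes_k\ell$ together with the triangle inequality in $A$ upgrades this to
\[\Big|\sum_j a_j\otimes b_j\Big|_\pi=\max_j|a_j|_A|b_j|_\ell\qquad(a_j\in A).\]
Injectivity of $\Phi$ is an immediate consequence, so $\Phi$ is an isomorphism of graded $\tgell$-algebras.

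By hypothesis the source of $\Phi$ is reduced, so $\widetilde B_\bullet^{\,\pi}$ is reduced and the displayed formula shows $|\cdot|_\pi$ is a genuine norm on $B$. A nonzero nilpotent $f\in B$ would then produce a nonzero nilpotent principal symbol in $\widetilde B_\bullet^{\,\pi}$, a contradiction; hence $B=A_\ell$ is reduced. Its spectral seminorm $|\cdot|_{\mathrm{sp}}$ is therefore an honest norm with $|\cdot|_{\mathrm{sp}}\le|\cdot|_\pi$, and reducedness of $\widetilde B_\bullet^{\,\pi}$ makes $|\cdot|_\pi$ power-multiplicative (a nonzero $\tilde f$ with $\tilde f^n=0$ would contradict reducedness of the graded reduction). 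Hence $|f|_{\mathrm{sp}}=\lim_n|f^n|_\pi^{1/n}=|f|_\pi$, i.e., $|\cdot|_\pi=|\cdot|_{\mathrm{sp}}$ on $B$, and so $\widetilde{B}_\bullet^{\mathrm{sp}}=\widetilde{B}_\bullet^{\,\pi}\cong\tgell\otimes_{\tgk}\tgA$ as claimed.

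The main obstacle is the orthogonality identity underlying the injectivity of $\Phi$: one must translate linear independence in the graded world into a quantitative estimate on the tensor-product seminorm, and this relies both on $\tgell$ being a graded-free $\tgk$-module and on $A$ being reduced so that its spectral seminorm is actually a norm. Once this identity is in hand, the remaining arguments — reducedness of $B$ and the passage from $|\cdot|_\pi$ to $|\cdot|_{\mathrm{sp}}$ via power-multiplicativity — are essentially formal.
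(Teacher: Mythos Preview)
Your proof is correct and follows essentially the same route as the paper's: compute the graded reduction of the tensor-product norm as $\tgell\otimes_{\tgk}\tgA$ (the paper states this in one line, citing freeness of $\tgell$ over $\tgk$, whereas you unpack the underlying orthogonality argument), deduce power-multiplicativity from the reducedness hypothesis, and then identify $|\cdot|_\pi$ with the spectral norm (the paper invokes \cite[Prop.~6.2.3/3]{BGR} for a complete power-multiplicative norm, while you use the equivalent spectral-radius formula $|f|_{\mathrm{sp}}=\lim_n|f^n|_\pi^{1/n}$).
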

\begin{proof}
Consider the tensor product norm on $A_\ell$. The corresponding
graded reduction equals $\tgell\otimes_{\tgk}\tgA$ because the
extension of graded fields $\tgk\subseteq\tgell$ is free. Since
this graded reduction is reduced, the tensor product norm must be
power-multiplicative (i.e. $||f^n||=||f||^n$) and therefore
$A_\ell$ must be reduced. Since the tensor product norm is
furthermore a complete norm on $A_\ell$, it must be equal to the
spectral norm on the $\ell$-affinoid algebra $A_\ell$ \cite[Prop.
6.2.3/3]{BGR}.
\end{proof}

Let $r\in \Gamma$ and denote by $B=k\{r^{-1}T\}$ the $k$-affinoid
algebra of the closed disc of radius $r$. We write $B=k\{T\}$ in
case $r=1$. The spectral seminorm on $B$ is a multiplicative norm
given by
$$ |\sum_n a_nT^n|=\sup_n |a_n|r^n.$$ The associated graded
reduction is given by $\tgB=\tgk[r^{-1}T]$ according to \cite[Prop.
3.1.i]{TemkinII}. %If $r\in |\bar{k}^\times|$ the affinoid
%algebra $B$ is called {\it strict}.

\begin{lemma}\label{lem-gradediso}
Let $A$ be a reduced $k$-affinoid algebra such that
$\tgell\otimes_{\tgk} \tgA$ is reduced for any complete extension
field $\ell$ of $k$. Let $s\in\bbR_+$. If $f:
k\{s^{-1}T\}\rightarrow A$ is a homomorphism of $k$-algebras whose
graded reduction is an isomorphism, then $f$ is an isomorphism.
\end{lemma}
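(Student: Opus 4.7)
The plan is to first upgrade $f$ to an isometry for the spectral seminorms (exploiting that $\tilde{f}_\bullet$ is injective), then conclude that $f$ is injective with closed image, and finally establish surjectivity by iterative approximation driven by the surjectivity of $\tilde{f}_\bullet$ at each graded piece.

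For the isometry: given any $b\in B$ with $|b|=\gamma>0$, the principal symbol $\tilde{b}\in \tilde{B}_\gamma$ is nonzero, so by injectivity of $\tilde{f}_\bullet$ the element $\tilde{f}_\bullet(\tilde{b})\in \tilde{A}_\gamma$ is nonzero as well. But $\tilde{f}_\bullet(\tilde{b})$ coincides with the class of $f(b)$ in $A_{\leq\gamma}/A_{<\gamma}$, which forces $|f(b)|=\gamma$. Since $A$ is reduced its spectral seminorm is a norm (\cite[6.2.1/4, 6.2.4/1]{BGR}), so $f$ is injective and $f(B)\subseteq A$ is closed, being the isometric image of the Banach algebra $B$.

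For surjectivity I would argue by successive approximation. Given $a\in A$ with $|a|=\gamma_0>0$, surjectivity of $\tilde{f}_\bullet$ at degree $\gamma_0$ produces $b_0\in B$ with $|b_0|\leq\gamma_0$ and $\tilde{f}_\bullet(\tilde{b}_0)=\tilde{a}$, whence $a_1:=a-f(b_0)$ satisfies $|a_1|<\gamma_0$. Iterating produces $b_n\in B$ with $|b_n|\leq\gamma_n:=|a_n|$, the sequence $\gamma_n$ being strictly decreasing; the partial sums $S_n=b_0+\cdots+b_n\in B$ satisfy $|S_{n+1}-S_n|=|b_{n+1}|\leq\gamma_n$, so once $\gamma_n\to 0$ they form a Cauchy sequence in $B$ converging to some $b$ with $f(b)=a$. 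After surjectivity is in hand, $f$ is a continuous isometric bijection of $k$-Banach algebras whose inverse is automatically isometric, giving the desired isomorphism of $k$-affinoid algebras.

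The main obstacle is establishing the convergence $\gamma_n\to 0$. The graded iso forces the nonzero spectral norm values of $A$ to coincide with those of $B=k\{s^{-1}T\}$, namely the set $|k^\times|\cdot s^{\mathbb{Z}_{\geq 0}}$; in the strict setting (or more generally whenever this subset of $\mathbb{R}_+^\times$ happens to be discrete), any strictly decreasing sequence in it must tend to zero, and the argument closes. More robustly, I would pass to the Banach quotient $Q=A/f(B)$ and argue via a filtered Nakayama-type reasoning: the one-step approximation shows that for every $q\in Q$ represented by some $a\in A$ with $|a|=\gamma$ there exists a representative of strictly smaller norm, so iterating inside $Q$ forces the quotient norm $|q|_Q$ to vanish, and Hausdorffness of $Q$ then yields $q=0$, hence $A=f(B)$.
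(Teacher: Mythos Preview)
Your isometry and injectivity argument is correct and matches the paper's opening step. The surjectivity argument, however, has a genuine gap.

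In the successive approximation, you obtain a strictly decreasing sequence $\gamma_n=|a_n|$, but strict decrease does not force $\gamma_n\to 0$: when $|k^\times|$ is dense in $\bbR_+^\times$ (e.g.\ $k=\bbC_p$), the value set $|k^\times|\cdot s^{\bbZ_{\geq 0}}$ is dense and the sequence may well stabilize above some $\epsilon>0$. Your ``more robust'' quotient argument does not escape this: what the one-step approximation actually shows is that for every representative $a$ of $q\in Q=A/f(B)$ there is another representative of strictly smaller norm, i.e.\ the quotient norm $|q|_Q$ is never attained by a representative. But in a Banach space the distance to a closed subspace need not be attained, so this does \emph{not} imply $|q|_Q=0$. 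Iterating just reproduces the same strictly decreasing but possibly non-null sequence inside the coset. A related warning sign: your argument nowhere uses the hypothesis that $\tgell\otimes_{\tgk}\tgA$ is reduced for \emph{every} complete extension $\ell$, whereas this hypothesis is exactly what makes the lemma go through.

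The paper closes this gap by a base-change man\oe uvre rather than by direct approximation. After noting that $f$ is isometric with closed image, one uses a result of Gruson to reduce surjectivity of $f$ to surjectivity of $f_\ell$ for a suitably large $\ell$. One then takes $\ell$ to be the completed algebraic closure of a field over which both algebras become strictly affinoid. The hypothesis on $\tgA$ guarantees (via Lemma~\ref{lem-keyreduced}) that the graded reduction of $A_\ell$ is $\tgell\otimes_{\tgk}\tgA$, so $\widetilde{(f_\ell)}_\bullet$ and in particular the ordinary reduction $\widetilde{(f_\ell)}_1$ are isomorphisms. Since $\ell$ is algebraically closed it is stable with divisible value group, and one may then invoke \cite[Cor.~6.4.2/2]{BGR} to conclude that $f_\ell$ is bijective. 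This is precisely the step that replaces your unavailable ``$\gamma_n\to 0$''.
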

\begin{proof}
Since the graded reduction of $f$ is injective, $f$ is isometric
and therefore injective. Let $B':=k\{s^{-1}T\}$. Consider any
complete extension field $k\subseteq \ell$. If the induced map
$f_\ell:=\ell\hat{\otimes}_k f$ between
$B'_\ell:=\ell\hat{\otimes}_k B'$ and $A_\ell:=\ell\hat{\otimes}_k
A$ is surjective, then so is $f$. Indeed, $f$ is strict and has
therefore closed image. We thus have a strict exact sequence
$B'\rightarrow A\rightarrow {\rm coker} f\rightarrow 0$ of
$k$-Banach spaces. By results of L. Gruson \cite[Lem.
A5]{RemyThuillierWerner10} we obtain an embedding ${\rm coker}
f\hookrightarrow \ell\hskip3pt\hat{\otimes}_k\hskip3pt {\rm coker}
f ={\rm coker} f_\ell$, which proves the claim.

There is a complete extension field $k\subseteq\ell'$ such that
$B'_{\ell'}$ and $A_{\ell'}$ are both strictly $\ell'$-affinoid
and such that $|B'_{\ell'}|=|\ell'|$, cf. \cite[2.1]{Berkovichbook}.
We let $\ell$ be the completion of an algebraic closure of
$\ell'$. The graded reductions of the $\ell$-affinoid algebras
$B'_\ell$ and $A_\ell$ are given by
$\tilde{\ell}_\bullet\otimes_{\tgk}\tilde{B'}_\bullet$ and
$\tilde{\ell}_\bullet\otimes_{\tgk}\tgA$ respectively. For $B'$
this follows from \cite[Prop. 3.1(i)]{TemkinII}. For $A$ this
follows from our hypothesis and Lem. \ref{lem-keyreduced}. We see
that the graded reduction of $f_\ell$ is the base change to
$\tgell$ of $\tilde{f}_\bullet$ and therefore still an
isomorphism. In particular, the ordinary reduction
$\tilde{(f_\ell)}_1$ is an isomorphism. Since
$\tilde{(A_\ell)}_\bullet $ and $\tilde{(B'_\ell)}_\bullet$ are
reduced, $A_\ell$ and $B'_\ell$ are reduced (Lem.
\ref{lem-keyreduced}). According to \cite[Prop. 3.4.1/3]{BGR} the
field $\ell$ is algebraically closed. Hence, $\ell$ is a stable
field and $|\ell^\times|$ is divisible (loc.cit., Prop. 3.6.2/12).
We may now apply loc.cit., Cor. 6.4.2/2 to conclude from the
bijectivity of $\tilde{(f_\ell)}_1$ that $f_\ell$ is bijective.
\end{proof}

In the following we are interested in affinoid algebras $A$ that
allow an isomorphism
$$\beta: \ell\hskip2pt \hat{\otimes}_k A\car \ell\hskip2pt \hat{\otimes}_k B$$ of $\ell$-algebras (and
hence of $\ell$-affinoid algebras). We denote the pointed set of
isomorphism classes of such $k$-algebras $A$ by $H^1(\ell/k, {\bf
\Aut}\hskip2pt B)$. If $\ell/k$ is a finite Galois extension and if
$A$ is an {\it arbitrary} $k$-algebra with an isomorphism $\beta$
as above, then $A$ is $k$-affinoid \cite[Prop.
2.1.14(ii)]{Berkovichbook}. Hence our present notation is
consistent with subsection \ref{subsection-twisted}.

\vskip8pt

Let us now assume that $k\subseteq\ell$ is a finite Galois
extension. If $G:=\Gal(\ell/k)$ denotes the Galois group, the
isomorphism (\ref{equ-isoclasses}) implies $H^1(\ell/k, {\bf
\Aut}\hskip2pt B)\simeq H^1(G,\autl).$ We therefore will focus on
the Galois cohomology $H^1(G,\autl).$

\vskip8pt

Let $(A,\beta)$ be a form of $B$. Recall that a commutative ring
of dimension $1$ which is noetherian and integrally closed is
called a {\it Dedekind domain}.
\begin{lemma}\label{lem-Dedekind}
The ring $A$ is a Dedekind domain.
\end{lemma}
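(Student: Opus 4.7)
The plan is to verify the four defining properties of a Dedekind domain---Noetherian, integral domain, Krull dimension one, and integrally closed---using the isomorphism $\beta:A_\ell\simeq B_\ell=\ell\{r^{-1}T\}$ together with the Galois descent picture of subsection \ref{subsection-twisted} in which $A$ is recovered as the ring of invariants of $B_\ell$ under the twisted $G$-action $\bar{g}=\theta_g g$.

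First, $A$ is a $k$-affinoid algebra, hence noetherian. For the domain property I observe that $A\hookrightarrow A\hat{\otimes}_k\ell=B_\ell$ by faithful flatness of $k\to\ell$, and $B_\ell=\ell\{r^{-1}T\}$ is an integral domain (the Tate algebra in one variable over a field is even a principal ideal domain). Hence $A$ is a domain. For the dimension, $B_\ell=A\otimes_k\ell$ is a finite free $A$-module of rank $[\ell:k]$, so the inclusion $A\subseteq B_\ell$ is integral; therefore $\dim A=\dim B_\ell=1$.

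The main point---and the step I expect to require the most care---is integral closedness. The approach is to exploit $A=B_\ell^G$ (with respect to the twisted action). Since $B_\ell$ is an integral domain containing $A$ as a subring, the inclusion extends to $\Frac(A)\hookrightarrow\Frac(B_\ell)$. Any element $x\in\Frac(A)$ is already $G$-invariant in $\Frac(B_\ell)$: writing $x=a/b$ with $a,b\in A=B_\ell^G$, invariance is immediate. Now if $x$ is integral over $A$, it is a fortiori integral over $B_\ell$, and since $B_\ell=\ell\{r^{-1}T\}$ is a principal ideal domain (hence normal) we conclude $x\in B_\ell$. Combined with $G$-invariance, $x\in B_\ell^G=A$. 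Thus $A$ is integrally closed in its fraction field.

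Collecting the four properties yields that $A$ is a Dedekind domain. The only non-routine ingredient is the Galois-invariance argument for normality; the other properties are immediate from faithful flatness, the finiteness of $\ell/k$, and the explicit description of the Tate algebra in one variable.
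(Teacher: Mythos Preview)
Your proof is correct and follows essentially the same route as the paper: noetherianity from affinoidness, domain and dimension~$1$ from the finite free extension $A\subseteq B_\ell$, and integral closedness via the twisted $G$-invariants $A=(B_\ell)^G$, using that $B_\ell$ is normal and that elements of $\Frac(A)$ are $G$-invariant in $\Frac(B_\ell)$. The paper phrases the last step as $\Frac(B_\ell)^G=\Frac(A)$, but only the inclusion you prove is actually needed.
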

\begin{proof}
Any affinoid algebra is noetherian. Since $\beta$ exhibits
$B_\ell$ as a finite free $A$-module, $A$ is an integral domain of
dimension $1$. The semilinear $G$-action on $B_\ell$ extends
naturally to its fraction field ${\rm Frac}(B_\ell)$ with ${\rm
Frac}(B_\ell)^G={\rm Frac}(A)$. Suppose $f\in {\rm Frac}(A)$ is
integral over $A$. Since $B_\ell$ is integrally closed, we have
$f\in B_\ell\cap {\rm Frac}(B_\ell)^G=(B_\ell)^G=A$.
\end{proof}
The isomorphism $\beta$ restricts to an inclusion
$A\hookrightarrow B_\ell$ which is strict with respect to spectral
seminorms. In particular, $\tgA\hookrightarrow\tgellB$ and, hence,
$\tgA$ is an integral domain. Since $\tgellB$ is a graded
$\tgell$-algebra, the inclusion extends to a homomorphism
\begin{numequation}\label{equ-mainhomo}\tgell\otimes_{\tgk}\tgA\rightarrow\tgellB\end{numequation}
of graded $\tgell$-algebras. It will play a central role in the
following. In general, it is neither injective nor surjective (cf.
example after proof of \cite[Prop. 3.1]{TemkinII}).

\vskip8pt

The maximal tamely ramified subextension

$$ k\subseteq k^{\rm tr}\subseteq\ell$$

induces, according to (\ref{equ-longexactII}), a short exact
sequence

$$ 1\longrightarrow  H^1(\Gal(k^{\rm tr}/k), \Aut\hskip2pt B_{k^{\rm tr}}) \longrightarrow
H^1(G, \Aut \hskip2pt B_\ell)\longrightarrow H^1(\Gal(\ell/k^{\rm
tr}), \Aut\hskip2pt B_\ell)$$ where the map $H^1(\Gal(k^{\rm
tr}/k), \Aut\hskip2pt B_{k^{\rm tr}}) \longrightarrow H^1(G,
\Aut\hskip2pt B_\ell)$ is injective. In the following we will
examine the outer two terms in this sequence.

\vskip8pt

Let
$$\sqrt{|k^\times|}:=\{\alpha\in\bbR_+: \alpha^n\in |k^\times| {\rm
~for~some~}n\geq 1\}.$$ If $r\in \sqrt{|k^\times|}$, we may
enlarge $\ell$ so that $r=|\epsilon|\in |\ell^\times|$ for some
$\epsilon\in\ell^\times$ and compose $\beta$ with the isomorphism
$B_\ell=\ell\{r^{-1}T\}\car\ell\{T\}$ induced by $T\mapsto
\epsilon T$. We will therefore restrict ourselves in the following
to the two cases $$r=1 \hskip10pt {\rm or~}\hskip10pt r\notin
\sqrt{|k^\times|}.$$

\subsection{Tamely ramified forms}

We keep all assumptions, but we suppose additionally that the
finite Galois extension $k\subseteq\ell$ is {\it tamely ramified}.
In this case we can determine all corresponding forms of $B$ based
on the following key proposition.

\begin{prop}\label{prop-gradedform} The graded homomorphism (\ref{equ-mainhomo}) is
bijective.
\end{prop}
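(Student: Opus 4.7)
The plan is to deduce the proposition directly from Lemma \ref{lem-keyreduced}. More precisely, I will verify its reducedness hypothesis in our setting, so that the conclusion of the lemma identifies the graded reduction of $A_\ell:=A\hat\otimes_k\ell$ with $\tgell\otimes_{\tgk}\tgA$ via the canonical map. Composing with the graded reduction of $\beta$ (which, being an isomorphism of $\ell$-affinoid algebras, is isometric for spectral seminorms) identifies the graded reduction of $A_\ell$ with $\tgellB$, and unwinding definitions shows that the resulting isomorphism $\tgell\otimes_{\tgk}\tgA\cong\tgellB$ is precisely the map (\ref{equ-mainhomo}).

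The first step is to show that $\tgA$ is a graded domain. The spectral seminorm on $A$ coincides with the restriction of the spectral seminorm on $B_\ell$ via the natural embedding $A\hookrightarrow B_\ell$, since $B_\ell$ is finite free over $A$ (cf.\ Lemma \ref{lem-Dedekind}) and so the map $\sM(B_\ell)\to\sM(A)$ is surjective. Consequently the induced graded homomorphism $\tgA\hookrightarrow\tgellB$ is injective. As $\tgellB=\tgell[r^{-1}T]$ is a graded polynomial ring over the graded field $\tgell$, it is a graded domain, and so is its subring $\tgA$; in particular, $\tgA$ is reduced.

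The second step is to show that $\tgell\otimes_{\tgk}\tgA$ is reduced. Here the tameness of $\ell/k$ is the crucial input: by subsection \ref{subsec-gradedreductions}, it implies that the extension $\tgk\subseteq\tgell$ of graded fields is separable. The graded analog of the classical fact that separable base change preserves reducedness (itself a consequence of the geometric reducedness of a separable graded field extension, cf.\ the appendix) then yields the claim.

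With these verifications, and since $A$ itself is reduced (being a Dedekind domain by Lemma \ref{lem-Dedekind}), the hypothesis of Lemma \ref{lem-keyreduced} is satisfied, and the lemma delivers the isomorphism as described in the first paragraph. The only genuinely nontrivial point is the graded separable-base-change reducedness statement; it is a routine graded analog of a standard fact and is handled in the appendix. All other steps are essentially formal consequences of the framework developed in the preceding subsections.
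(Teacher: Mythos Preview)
Your proposal is correct and follows essentially the same route as the paper: verify that $\tgell\otimes_{\tgk}\tgA$ is reduced (using that $\tgA$ is a graded domain and that $\tgk\subseteq\tgell$ is separable by tameness), then apply Lemma~\ref{lem-keyreduced} so that (\ref{equ-mainhomo}) becomes the graded reduction of $\beta$. The only cosmetic difference is that the paper makes the reducedness step explicit by passing to the graded fraction field $F={\rm Frac}_\Gamma(\tgA)$ and noting that $F\otimes_{\tgk}\tgell$ is \'etale over $F$ (citing \cite[1.14.4]{Ducros}) hence reduced, whereas you invoke the same fact as a general ``separable base change preserves reducedness'' principle.
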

\begin{proof}
Let $F:={\rm Frac}_\Gamma(\tgA)$ be the graded fraction field of
$\tgA$, cf. appendix. Since $k\subseteq\ell$ is tamely ramified,
the finite extension $\tgk\subseteq\tgell$ is separable. The
finite graded $F$-algebra $F\otimes_{\tgk}\tgell$ is therefore
\'etale in the sense of \cite[1.14.4]{Ducros} and therefore
reduced. Thus, the subring
$\tgA\otimes_{\tgk}\tgell\hookrightarrow F\otimes_{\tgk}\tgell$ is
reduced. Now Lem. \ref{lem-keyreduced} implies that
(\ref{equ-mainhomo}) is isomorphic to the graded reduction of
$\beta$. In particular, it is bijective.
\end{proof}
The proposition implies a map of pointed sets
$$H^1(\ell/k, {\bf \Aut}\hskip2pt B)\longrightarrow H^1(\tgell/\tgk, {\bf \Aut}^{\rm gr}\; \tgB),\hskip8pt(A)\mapsto (\tgA).$$
%where the right-hand side refers to the pointed set of graded
%forms of $\tgB$ with respect to the extension
%$\tgk\subseteq\tgell$.
By (\ref{equ-galoisident}) we may identify $G=G(\ell/k)\simeq
G(\tgell/\tgk).$ The obvious group homomorphism
$\autl\rightarrow\autgrl$ is then $G$-equivariant and induces a
map of pointed sets
$$H^1(G,\autl)\rightarrow H^1(G,\autgrl).$$ The following lemma
follows by unwinding the definitions of the maps involved.
\begin{lemma}
The diagram
\[\xymatrix{
 H^1(\ell/k, {\bf \Aut}\hskip2pt B) \ar[r] \ar[d]^{\simeq} & H^1(\tgell/\tgk, {\bf \Aut}^{\rm gr}\; \tgB)  \ar[d]^{\simeq} \\
  H^1(G,\autl) \ar[r] & H^1(G,\autgrl) }
\]

is commutative. Here, the vertical arrows come from
(\ref{equ-isoclasses}) and (\ref{equ-grisoclasses}).
\end{lemma}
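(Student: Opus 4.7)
The plan is to trace a given form around the square and check that both compositions produce the same cocycle on the level of cocycles, after which independence of representative follows from the formalism of subsection~\ref{subsection-twisted} and its graded analogue.

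Fix a form $(A,\beta)$ of $B$, i.e.\ an isomorphism $\beta : \ell\,\hat{\otimes}_k A \car B_\ell$ of $\ell$-affinoid algebras. Its image in $H^1(G,\autl)$ under the left vertical arrow is, by (\ref{equ-isoclasses}), represented by the cocycle $g\mapsto \theta_g := \beta\circ g\circ \beta^{-1}\circ g^{-1}$. On the other hand, Prop.~\ref{prop-gradedform} tells us that the graded reduction $\tilde\beta_\bullet$ of $\beta$ is a bijection $\tgell\otimes_{\tgk}\tgA \car \tgellB$, so that $(\tgA,\tilde\beta_\bullet)$ represents an element of $H^1(\tgell/\tgk,{\bf \Aut}^{\rm gr}\;\tgB)$. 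Under (\ref{equ-grisoclasses}), and using the identification $G\simeq\Gal(\tgell/\tgk)$ from (\ref{equ-galoisident}), its image in $H^1(G,\autgrl)$ is the class of
\[
g \longmapsto \tilde\beta_\bullet \circ \tilde g_\bullet \circ \tilde\beta_\bullet^{-1} \circ \tilde g_\bullet^{-1}.
\]

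It therefore suffices to show that the image of the cocycle $\theta_g$ under the homomorphism $\autl\to\autgrl$, $\alpha\mapsto\tilde\alpha_\bullet$, is exactly this cocycle. This is pure functoriality of the principal-symbol construction: for any isometric $k$-algebra homomorphism $f$ the graded map $\tilde f_\bullet$ is well-defined, and for composable isometries $f,h$ one has $\widetilde{(f\circ h)}_\bullet = \tilde f_\bullet \circ \tilde h_\bullet$ and $\widetilde{(f^{-1})}_\bullet = (\tilde f_\bullet)^{-1}$ whenever $f$ is an isomorphism, both immediate from the definition of the principal symbol in subsection~\ref{subsec-gradedreductions}. Moreover each $g\in G$ acts on $B_\ell$ by an isometric $k$-linear automorphism, and by the very construction of (\ref{equ-galoisident}) its graded reduction is the corresponding element of $\Gal(\tgell/\tgk)$ acting on $\tgellB$. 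Applying these rules to $\theta_g = \beta\circ g\circ\beta^{-1}\circ g^{-1}$ yields
\[
\widetilde{(\theta_g)}_\bullet \;=\; \tilde\beta_\bullet \circ \tilde g_\bullet \circ \tilde\beta_\bullet^{-1} \circ \tilde g_\bullet^{-1},
\]
which is precisely the cocycle obtained by going along the top-right path.

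No serious obstacle arises. The entire argument is bookkeeping around the two identifications $G\simeq\Gal(\tgell/\tgk)$ and $\autl\to\autgrl$ together with the multiplicative and inversion-compatibility of the principal symbol, and care is only needed to verify that the $G$-action on $\tgellB$ obtained by reducing the $G$-action on $B_\ell$ really agrees with the one transported from $\Gal(\tgell/\tgk)$ through (\ref{equ-galoisident}), which is built into the construction of that isomorphism.
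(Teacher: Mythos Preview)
Your proposal is correct and is precisely the ``unwinding the definitions of the maps involved'' that the paper gives as its entire proof. You have made explicit what the paper leaves tacit: the functoriality of graded reduction with respect to composition and inversion of isometries, and the compatibility of the $G$-action on $B_\ell$ with the $\Gal(\tgell/\tgk)$-action on $\tgellB$ via (\ref{equ-galoisident}).
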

%Let $\rho:\tgell^\times\rightarrow\Gamma$ be the grading on
%$\tgell$.
An automorphism $\alpha$ of the graded $\tgell$-algebra
$\tgellB=\tgell[r^{-1}T]$ is, in particular, an automorphism of
the underlying algebra and maps $T$ to $aT+b$ with $a,b\in\tgell$.
Since $\alpha$ preserves the grading, one has $a\in\tell_1^\times$
and a $G$-equivariant map $\autgrl\mapsto \tell_1^\times,
\alpha\mapsto a$.
\begin{prop}
%Suppose that $p\neq 1$ in case $r=1$.
The map induces an
isomorphism $H^1(G,\autgrl)\car H^1(G,\tell_1^\times)$.
\end{prop}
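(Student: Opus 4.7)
The plan is to identify $\autgrl$ with a split semidirect product and combine the long exact sequence of non-abelian cohomology with a twisting argument.

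\emph{Structure of $\autgrl$.} The grading condition $\deg T = r$ forces every graded $\tgell$-algebra automorphism $\alpha$ of $\tgellB = \tgell[r^{-1}T]$ to satisfy $\alpha(T) = aT + b$ with $a \in \tell_1^\times$ and $b \in N := (\tgell)_r$. The composition law identifies $\autgrl$ with the semidirect product $N \rtimes \tell_1^\times$, and the assignment $a \mapsto (T \mapsto aT)$ is a $G$-equivariant section of the projection to $\tell_1^\times$. This yields a split short exact sequence of $G$-groups
\begin{equation*}
1 \to N \to \autgrl \to \tell_1^\times \to 1,
\end{equation*}
so surjectivity of the induced map $H^1(G,\autgrl) \to H^1(G,\tell_1^\times)$ is immediate from the splitting.

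\emph{Reduction to $r=1$.} By the discussion preceding this subsection we may assume $r = 1$ or $r \notin \sqrt{|k^\times|}$. Finiteness of $\ell/k$ forces $|\ell^\times| \subseteq \sqrt{|k^\times|}$, so in the second case $N = (\tgell)_r = 0$ and the proposition holds trivially. I therefore restrict to $r=1$, where $N = \tell_1$ with its natural $G$-action.

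\emph{Injectivity via twisting.} Given cocycles $\theta, \theta'$ in $\autgrl$ with the same image in $H^1(G, \tell_1^\times)$, the $G$-equivariant splitting allows us to modify $\theta'$ by a coboundary so that the linear parts of $\theta$ and $\theta'$ agree on the nose, say both equal to $a_g$. Writing $\theta_g(T) = a_g T + b_g$ and $\theta'_g(T) = a_g T + b'_g$, the cocycle identity in $\autgrl$ unpacks to $a_{gh} = a_g g(a_h)$ together with $b_{gh} = a_g g(b_h) + b_g$, which is the 1-cocycle condition for $b$ in the twisted $G$-module $N_\theta = \tell_1$ with action $g *_\theta n := a_g g(n)$. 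A short computation with $\beta(T) = T + d$ shows $\theta, \theta'$ are cohomologous exactly when $\delta := b' - b$ is a coboundary in $N_\theta$, so injectivity reduces to the vanishing $H^1(G, N_\theta) = 0$ for every choice of $a$.

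\emph{Vanishing of $H^1(G, N_\theta)$.} The inertia $I$ fixes $\tell_1$ pointwise, so its action on $N_\theta = \tell_1$ is multiplication by the scalar $a_h \in \tell_1^\times$. Tame ramification makes $|I| = e$ invertible in $\tell_1$, whence $H^1(I, N_\theta) = 0$, and inflation--restriction yields $H^1(G, N_\theta) \simeq H^1(G/I, N_\theta^I)$. If $a|_I \neq 1$ then $N_\theta^I = 0$; otherwise $a$ descends to a cocycle of $G/I = \Gal(\tell_1/\tk_1)$ in $\tell_1^\times$, Hilbert 90 presents it as a coboundary $a_{\bar g} = \bar g(c)/c$, and multiplication by $c$ gives a $G/I$-equivariant isomorphism between $\tell_1$ with its standard action and $N_\theta^I$ with its twisted action, so Lemma~\ref{lem-normalbasis} supplies $H^1(G/I, \tell_1) = 0$. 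The main obstacle is precisely this twisting step: because $\autgrl$ is non-abelian, the long exact sequence only controls the fiber over the base point, and eliminating every other fiber forces the case split on $a|_I$ together with the tame-ramification input.
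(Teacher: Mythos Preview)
Your proof is correct and follows the same structural outline as the paper: both reduce to $r=1$, identify $\autgrl$ with the affine group $\tell_1 \rtimes \tell_1^\times$, use the $G$-equivariant splitting for surjectivity, and appeal to a vanishing of $H^1$ on the additive kernel. The paper, however, is terser at the final step: after recording the exact sequence $H^1(G,\tell_1) \to H^1(G,\autgrl) \to H^1(G,\tell_1^\times)$ and the section, it simply says ``it remains to see $H^1(G,\tell_1)=0$'' and quotes Lemma~\ref{lem-normalbasis}. Taken literally, exactness of pointed sets together with $H^1(G,\tell_1)=0$ only shows that the fiber over the \emph{base point} is a singleton; for non-abelian $H^1$ this does not by itself give injectivity. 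Your twisting argument is exactly the standard way to fill this gap: showing $H^1(G,N_a)=0$ for \emph{every} cocycle $a$---via the case split on $a|_I$, tameness for the $I$-cohomology, and Hilbert~90 to untwist the $G/I$-action---controls every fiber, not just the distinguished one. So your argument is in fact more complete than the paper's as written.

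One small inaccuracy that does not affect the conclusion: you assert that $\theta,\theta'$ are cohomologous \emph{exactly} when $\delta=b'-b$ is a coboundary in $N_\theta$. A conjugating element $\beta$ may have nontrivial linear part $c\in\tk_1^\times$, so the honest equivalence is that $b'-cb$ is a coboundary for some such $c$. But the direction you actually need---$\delta$ a coboundary implies $\theta\sim\theta'$ via $\beta(T)=T+d$---is correct, and since $\delta$ is an $N_a$-cocycle, the vanishing $H^1(G,N_a)=0$ forces it to be a coboundary.
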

\begin{proof}
Suppose $r\notin\sqrt{|k^\times|}$. Then $b=0$ and already the map
$\alpha\mapsto a$ is an isomorphism. Suppose on the contrary
$r=1$. Then $\alpha\mapsto a$ induces an exact $G$-equivariant
sequence
$$
0\longrightarrow\tell_1\longrightarrow\autgrl\longrightarrow\tell_1^\times\longrightarrow
1.$$ It is equivariantly split. Indeed, given $a\in\tell_1^\times$
the map induced by $T\mapsto aT$ lies in $\autgrl$ and this gives
the equivariant splitting $\tell_1^\times\rightarrow\autgrl$.
%indeed \alpha is determined by its value on \tilde{\epsilon}T (i.e. by \tilde{\epsilon}(aT+b)= \tilde{\epsilon}a T + \tilde{\epsilon} b.
%here \tilde{\epsilon}a and \tilde{\epsilon}b are in \tell_1^\times
%Note that the sequence is $G$-equivariant. The identification \autgrl = \{\tilde{\epsilon}a T + \tilde{\epsilon} b : a,b\in\tell_1^\times \}
% given by \alpha\mapsto \alpha(\tilde{\epsilon}T) is G-equivariant: g.\alpha
By (\ref{equ-longexact}) the sequence
%%$$(\autgrl)^G\longrightarrow (\tell_1^\times)^G\longrightarrow
$$H^1(G,\tell_1)\longrightarrow H^1(G,\autgrl)\longrightarrow
H^1(G,\tell_1^\times)$$ is exact. We have a section for the right
hand arrow which is therefore surjective. It remains to see
$H^1(G,\tell_1)=0$. This follows from Lem. \ref{lem-normalbasis}.
\end{proof}
If $a\in\ell$ with $|a|=1$, then $T\mapsto aT$ gives an element of
$\autl$. This gives a map
\begin{numequation}\label{equ-section}|\ell^\times|/|k^\times|=H^1(G,(\ellc)^\times)\longrightarrow
H^1(G,\autl).\end{numequation} The first identity here is due to
Prop. \ref{prop-ram}. Conversely, any $\alpha\in\autl$ preserves
the spectral norm whence
$$\alpha(T)=a_0+a_1T+a_2T^2+\cdot\cdot\cdot$$
with $a_i\in\ell$ and
$$ |a_0|\leq r,\hskip8pt |a_1|=1 \hskip8pt{\rm and}\hskip8pt |a_i|r^{i}<r {\rm
~for~all~} 2\leq i.$$ Mapping $\alpha$ to $a_1$ induces a map
$$ H^1(G,\autl)\longrightarrow H^1(G,(\ellc)^\times)$$
with a section given by (\ref{equ-section}). Let us make the forms
of $B$ corresponding to the injection
$$ \vg=H^1(G,(\ell^\circ)^\times)\hookrightarrow H^1(G,\autl)$$
explicit. Let $x$ be an element of $\ell^\times$. For $g\in G$ let
$\theta_g$ be the automorphism of the $\ell$-algebra $B_\ell$
whose value on $T$ is given by $\frac{g(x)}{x}T$. The form of $B$
corresponding to the class of $\gamma=|x|$ in $\vg$ equals, up to
isomorphism, the subring $A\subseteq B_\ell$ of invariants under
the semilinear $G$-action given by $\bar{g}=\theta_gg$, cf.
subsection \ref{subsection-twisted} and proof of Prop.
\ref{prop-ram}. For an element $f=\sum_n a_nT^n\in B$ we compute
$$\bar{g}(f)=\sum_n g(a_n)(\theta_g(T))^n=\sum_n g(a_n)\frac{g(x^n)}{x^n}T^n$$ and thus
$$ \bar{g}(f)=f \hskip3pt{\rm for~all~}g\in G\Longleftrightarrow
g(a_n\hskip2pt x^n)=a_n\hskip1pt x^n
{\rm~for~all~}n,g\Longleftrightarrow a_n\hskip1pt x^n\in k
{\rm~for~all~}n.$$ The subring $A$ is therefore given by the ring
of all series
$$ \sum_n b_n(x^{-1}T)^n \hskip8pt {\rm where~}b_n\in k\hskip8pt
{\rm and~} |b_n|\hskip1pt s^n\rightarrow 0 \hskip8pt {\rm
for~}n\rightarrow\infty$$ where $s=\gamma^{-1}r$. Mapping
$x^{-1}T$ to $T$ induces an isomorphism $$A\car k\{s^{-1}T\}.$$
This shows that the forms of $B$ that correspond to the finite
subset $H^1(G,(\ellc)^{\times})$ are given by the $k$-affinoid
discs of radii $\gamma_ir$ where $\gamma_i\in |\ell^\times|$ runs
through a system of representatives for the classes in $\vg$.
\begin{prop}
One has a commutative diagram of bijections of pointed sets
\[\xymatrix{
  H^1(G,\autl) \ar[r]^>>>>>{\simeq} \ar[d]^{\simeq} & H^1(G,\autgrl)  \ar[d]^{\simeq} \\
H^1(G,(\ellc)^\times)\ar[r]^{\simeq} & H^1(G,\tell_1^\times)  }
\]
where the lower horizontal arrow comes from Prop.
\ref{prop-unitsvsresiduefield}.
\end{prop}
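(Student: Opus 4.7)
The strategy is to leverage that three of the four arrows in the square are already under control. The bottom horizontal is the isomorphism furnished by Prop.~\ref{prop-unitsvsresiduefield}; the right vertical is the isomorphism furnished by the preceding proposition; and the left vertical $p$ admits the explicit section $f\colon \vg\to H^1(G,\autl)$ from~(\ref{equ-section}). Once commutativity of the square is verified and $p$ is shown to be bijective, bijectivity of the top arrow follows automatically by a diagram chase.

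Commutativity is checked by tracking elements. For $\alpha\in\autl$ with $\alpha(T)=a_0+a_1T+\sum_{i\geq 2}a_iT^i$, its graded reduction $\bar\alpha\in\autgrl$ sends $T$ to $\tilde a_{0,r}+\tilde a_1 T$, where $\tilde a_{0,r}$ denotes the degree-$r$ part of $a_0$ (zero if $|a_0|<r$); hence the right vertical maps $[\bar\alpha]$ to $[\tilde a_1]\in H^1(G,\tell_1^\times)$. The left vertical sends $[\alpha]$ to $[a_1]\in H^1(G,(\ellc)^\times)$, which the bottom reduces to $[\tilde a_1]$. Both paths agree.

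For the bijectivity of $p$, I argue that every form of $B$ is isomorphic to a $k$-affinoid disc; then $f$ is surjective, and since it is already injective as a section of $p$, both $f$ and $p=f^{-1}$ are bijections. Given a form $(A,\beta)$, Prop.~\ref{prop-gradedform} identifies $\tgA$ with a graded form of $\tgB=\tgk[r^{-1}T]$ along $\tgk\subseteq\tgell$; the bijections already available on the right and bottom of the square classify such graded forms by $\vg$, yielding $\tgA\simeq \tgk[s^{-1}T]$ as graded $\tgk$-algebras for some $s=\gamma r$ with $\gamma\in|\ell^\times|$. Lifting a homogeneous generator $\tilde t\in\tgA$ of degree $s$ to $t\in A$ produces a $k$-algebra map $k\{s^{-1}T\}\to A$, $T\mapsto t$, whose graded reduction is the chosen isomorphism. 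Since $A$ is reduced (Lem.~\ref{lem-Dedekind}) and $\tgell'\otimes_{\tgk}\tgA$ is reduced for every complete extension $\ell'/k$ (enlarging to $\ell''$ containing both $\ell$ and $\ell'$ makes $\tgell''\otimes_{\tgk}\tgA=\tgell''[r^{-1}T]$ reduced, and the flat graded extension $\tgell'\hookrightarrow\tgell''$ forces reducedness of $\tgell'\otimes_{\tgk}\tgA$), Lem.~\ref{lem-gradediso} upgrades the map to the desired isomorphism $k\{s^{-1}T\}\car A$.

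The main technical obstacle is this lifting step, where one must secure the reducedness hypothesis of Lem.~\ref{lem-gradediso}; this is exactly where the explicit graded classification of forms of $\tgB$ enters decisively. With $p$ bijective, commutativity together with the bijectivity of the other three arrows yields bijectivity of the top horizontal arrow.
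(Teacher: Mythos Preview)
Your argument is correct and follows essentially the same route as the paper: verify commutativity, use the established bijections on the right and bottom to deduce $\tgA\simeq\tgk[s^{-1}T]$, lift via Lem.~\ref{lem-gradediso}, and conclude that the section $f$ is surjective. The only cosmetic difference is that the paper obtains $\tgA\simeq\tgk[s^{-1}T]$ by an explicit invariants computation rather than by invoking commutativity; also, your $\ell''$ detour for reducedness is unnecessary, since once $\tgA\simeq\tgk[s^{-1}T]$ you have $\tgell'\otimes_{\tgk}\tgA\simeq\tgell'[s^{-1}T]$ directly for any $\ell'$.
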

\begin{proof}
The commutativity of the diagram follows by inspection. Let
$(A,\beta)$ be a form of $B$ and let $c\in H^1(G,\autl)$ be its
cohomology class. Let $\tilde{c}$ be its image in
$H^1(G,\autgrl)$. Since the lower horizontal arrow and the right
vertical arrow in the diagram are isomorphisms, there is
$x\in\ell^\times$ such that $\tilde{c}=[\tilde{\theta}]$ where
$$\tilde{\theta}_g(T)=(\frac{gx}{x}{\rm~mod~}\ellcc)T\in \tgellB$$
for all $g\in G$. If $\tilde{x}\in\tgell$ denotes the principal
symbol of $x$, we have
$(\frac{gx}{x}{\rm~mod~}\ellcc)=\frac{g\tilde{x}}{\tilde{x}}.$ Let
$\gamma=|x|$. Now $\tgA$ equals, up to a graded isomorphism, the
invariant $\tgk$-subalgebra of $\tgellB$ under the semilinear
$G$-action defined by $\bar{g}:=\tilde{\theta}_g g.$ A computation
as above shows that this $\tgk$-subalgebra is given by the algebra
of all polynomials
$$ \sum_n b_n(\tilde{x}^{-1}T)^n\hskip8pt {\rm where~}b_n\in\tgk.$$
It has the induced grading from $\tgellB$, i.e. a polynomial as
above is homogeneous of degree $s$ if and only if
$b_n\tilde{x}^{-n}$ is homogeneous of degree $sr^{-n}$, i.e. if
and only if $b_n$ is homogeneous of degree $s\gamma^nr^{-n}$.
Mapping $\tilde{x}^{-1}T$ to $T$ induces a graded isomorphism from
this $\tgk$-subalgebra to $\tgk[s^{-1}T]$ where $s:=\gamma^{-1}r$.
We thus have an isomorphism of graded $\tgk$-algebras
\begin{numequation}\label{equ-red}\tgA\car\tgk[s^{-1}T].\end{numequation}
Let $\tilde{f}$ be the preimage of $T$ under this isomorphism and
$f\in A$ a representative. Mapping $T$ to $f$ induces a
homomorphism $\psi: k\{s^{-1}T\}\rightarrow A$ of $k$-affinoid
algebras whose graded reduction is an isomorphism. If $\ell'$ is
any complete extension field of $k$, then
$\tilde{\ell'}_\bullet\otimes_{\tgk}\tgA$ is reduced according to
(\ref{equ-red}). Since the ring $A$ is reduced, Lem.
\ref{lem-gradediso} implies that $\psi$ is an isomorphism. This
shows that $c$ lies in the image of
$H^1(G,(\ellc)^\times)\hookrightarrow H^1(G,\autl)$. The left
vertical arrow of our diagram is therefore an isomorphism and
hence, so is any arrow in the diagram.
\end{proof}

We summarize this discussion in the following statement.
\begin{thm}
The forms of $B$ with respect to the tamely ramified extension
$k\subseteq\ell$ are given by the finitely many $k$-affinoid discs
of radii $\gamma_ir$ where $\gamma_i\in |\ell^\times|$ runs
through a system of representatives for the classes in $\vg$.
\end{thm}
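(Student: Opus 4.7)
The proof is essentially a matter of assembling the results already established in this subsection. My plan is as follows.

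First, I would combine the bijections in the commutative diagram of the preceding Proposition with the identification $|\ell^\times|/|k^\times| \cong H^1(G,(\ell^\circ)^\times)$ from Proposition \ref{prop-ram}, and with the canonical isomorphism $H^1(\ell/k, \mathbf{Aut}\, B) \cong H^1(G, \aut)$ from (\ref{equ-isoclasses}). Chaining these together gives a bijection of pointed sets
\[
|\ell^\times|/|k^\times| \;\stackrel{\sim}{\longrightarrow}\; H^1(\ell/k, \mathbf{Aut}\, B),
\]
and in particular the set of forms is finite, of cardinality $e = (|\ell^\times|:|k^\times|)$.

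Second, I would make this bijection concrete using the explicit construction performed right before the preceding Proposition: to a class $[\gamma] \in |\ell^\times|/|k^\times|$ with representative $\gamma = |x|$ for some $x \in \ell^\times$, the cocycle $g \mapsto \theta_g$ defined by $\theta_g(T) = (gx/x)\,T$ is sent. The associated form, by Galois descent as in subsection \ref{subsection-twisted}, is the subring of $G$-invariants in $B_\ell$ under the twisted action $\bar g = \theta_g g$. The direct calculation already carried out shows that this invariant subring consists of the series $\sum_n b_n(x^{-1}T)^n$ with $b_n \in k$ satisfying the appropriate convergence condition, and that the substitution $x^{-1}T \mapsto T$ identifies it with $k\{s^{-1}T\}$, where $s = \gamma^{-1}r$.

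Third, it remains only to observe that as $\gamma$ ranges over a set of representatives of $|\ell^\times|/|k^\times|$, so does $\gamma^{-1}$, so the resulting radii $s = \gamma^{-1}r$ can equally well be written as $\gamma_i r$ for representatives $\gamma_i$ of the classes in $|\ell^\times|/|k^\times|$. Combined with the bijection above, this gives the statement of the Theorem. There is no real obstacle here; the content of the theorem was already distilled into the previous propositions, and the final step is just to package the explicit description of invariants together with the cohomological enumeration.
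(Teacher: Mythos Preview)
Your proposal is correct and matches the paper's approach exactly: the theorem is explicitly introduced as a summary of the preceding discussion, and you have accurately identified the pieces being assembled---the bijection $|\ell^\times|/|k^\times|\cong H^1(G,(\ell^\circ)^\times)$ from Prop.~\ref{prop-ram}, the left vertical isomorphism of the preceding commutative diagram, the identification (\ref{equ-isoclasses}), and the explicit invariant computation showing the form attached to $\gamma=|x|$ is $k\{s^{-1}T\}$ with $s=\gamma^{-1}r$. Your final remark about replacing $\gamma^{-1}$ by $\gamma$ in the indexing is a cosmetic point the paper leaves implicit.
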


\section{Forms of Russell type and wild ramification}

\subsection{The additive group}\label{subsec-addgroup}

Let $k$ be a field of characteristic $p>0$ and let $\mathbb{G}_a$
be the additive group over $k$. The endomorphism ring of
$\mathbb{G}_a$ equals the noncommutative polynomial ring $k[F]$
with relations $Fa=a^pF$ for $a\in k$ where $F$ corresponds to the
Frobenius endomorphism \cite[Prop.
II.\S3.4.4]{DemazureGabriel}. We let $k[F]^*$ be the
multiplicative monoid consisting of separable endomorphisms, i.e.
the set of all polynomials $\sum_i a_iF^{i}$ with $a_0\neq 0$. For
any $n$ the left ideal $k[F]F^n$ of $k[F]$ is a two-sided ideal
and we let $U_n$ be the image of $k[F]^*$ under the projection
$k[F]\rightarrow k[F]/k[F]F^n$. Then $U_n$ is a multiplicative
group.

\vskip8pt

If $T$ denotes a parameter for $\mathbb{G}_a$, then $F$ induces
the ring homomorphism given by $T\mapsto T^p$ which we denote by
$F$ as well. Mapping
$$\tau=\sum_i a_iF^{i}\mapsto \tau (T)$$ yields a bijection of
$k[F]$ with the set of $p$-polynomials $
a_0T+a_1T^p+\cdot\cdot\cdot +a_mT^{p^m}$ in the (commutative)
polynomial ring $k[T]$. The separable endomorphisms correspond to
those $p$-polynomials with $a_0\neq 0$.

\vskip8pt

Any endomorphism $\varphi$ of $k$ extends to an endomorphism of
$k[F]$ via $$\tau=\sum_i a_iF^{i}\mapsto \varphi(\tau)=\sum_i
\tau(a_i)F^{i}.$$ If $\varphi$ equals the $n$-th Frobenius
$a\mapsto a^{p^n}$ on $k$ we write $\tau^{(n)}:=\varphi(\tau)$. In
this case $k[F]^{(n)}$ denotes the image of $k[F]$ under
$\tau\mapsto\tau^{(n)}$. This image is $k[F]$ if and only if $k$
is perfect.

\vskip8pt

Let $A$ be a Hopf algebra over $k$ with comultiplication $c$ and
let $T=0$ be the unit element of $\mathbb{G}_a$. The set of group
homomorphisms ${\rm \Hom}(\Spec(A), \mathbb{G}_a)$ is in bijection
via $\psi\mapsto\psi(T)$ with the set of elements $x\in A$ that
satisfy $c(x)=1\otimes x+ x\otimes 1$.

\vskip8pt

\subsection{Graded Frobenius}

Let $\Gamma=\mathbb{R}^\times_+$. %We assume that any element in
%$\Gamma$ has a unique $p$-th root.
All graded rings are $\Gamma$-graded rings. \vskip8pt

Let $A$ be a graded ring such that $A_1$ has characteristic $p>0$. %which is a {\it graded domain},
%i.e. every nonzero homogeneous element of $A$ is not a zero
%divisor in $A$.
Let $\rho=\rho_A$ be its grading function.  %If the
%source has the monoid structure induced from $A-\{0\}$ the map
%$\rho$ is multiplicative.
Let $n\in\bbZ$. We define a new grading $\rho^{\tn}$ on $A$ by
$$\rho^{\tn}(x):=\rho(x)^{1/p^n}$$ for any nonzero homogeneous
$x\in A$. We denote by $A^{\tn}$ the ring $A$ equipped with the
new grading $\rho^{\tn}$, i.e. $A^{\tn}=\oplus_\gamma
A^{\tn}_\gamma$ where $$A^{\tn}_\gamma=A_{\gamma^{p^n}}$$ for all
$\gamma\in\Gamma$. Obviously $A^{\tn}_1=A_1$.

\vskip8pt

Let $k$ be a graded field such that $k_1$ has characteristic
$p>0$. Then $k^{\tn}$ is a graded field. If $A$ is a graded
$k$-algebra, then $A^{\tn}$ is a graded $k^{\tn}$-algebra. The
Frobenius map
$$\phi^n: k\rightarrow k^{\tn}, x\mapsto x^{p^n}$$ is a graded
homomorphism (of degree $1$) and we may form the graded tensor
product
$$\theta^{\tn}A=k^{\tn}\otimes_{\phi^n,k}A.$$ It is a graded
$k^{\tn}$-algebra via multiplication on the left hand factor and
comes with the Frobenius homomorphism
$$ F^{\tn}_A:
\theta^{\tn}A\longrightarrow A^{\tn},~x\otimes a\mapsto
xa^{p^n}.$$ The map $F^{\tn}_A$ is a graded homomorphism (of
degree $1$) of graded $k^{\tn}$-algebras.

\vskip8pt

We briefly discuss the relation between the functor $\theta^{\tn}$
and base change. Let $\bar{k}$ be a graded algebraic closure of
$k$, cf. appendix. Suppose that $k\subseteq\ell\subseteq\bar{k}$
is a graded extension field which is purely inseparable over $k$.
If $\ell^{p^n}\subseteq k$ for some $n$, then associativity of the
graded tensor product gives
\begin{numequation}\label{equ-FrobI}\theta^{\tn}A\simeq k^{\tn}\otimes_{\bar{\phi}^n,\ell}
A_\ell
\end{numequation} with the graded map
$\bar{\phi}^n:\ell\rightarrow k^{\tn}, x\mapsto x^{p^n}$. Let
$$k^{p^{-n}}:=\{x\in \bar{k}: x^{p^n}\in k\},$$ a graded
subfield of $\bar{k}$ and a purely inseparable extension of $k$. %if k is not perfect (i.e. if phi is not surjective on k) then k^{-p} is strictly bigger than k and hence as degree p.
%by induction k^{p^-n} has then degree p^n.
If $k^{p^{-n}}\subseteq\ell$, we have in turn
\begin{numequation}\label{equ-FrobII}\ell\otimes_{\psi^n,k^{\tn}}\theta^{\tn}A=\ell\otimes_{\psi^n,k^{\tn}}
(k^{\tn}\otimes_{\bar{\phi}^n,\ell} A_\ell)\simeq
A_\ell\end{numequation} where $\psi^n: k^{\tn}\rightarrow\ell,
x\mapsto x^{1/p^n}$.

\subsection{Graded forms of Russell
type}\label{subsection-gradedformsofRusselltype}

We keep the assumptions. Let $A$ be a graded $k$-algebra. We call
$A$ a {\it graded Hopf algebra} if $A$ is a Hopf algebra relative
to the ring $k$ in the usual sense \cite{Waterhouse} and if
the relevant maps -comultiplication, inverse, augmentation- are
graded homomorphisms (of degree $1$). In this case $\theta^{\tn}A$
and $A^{\tn}$ are graded Hopf algebras for all $n\in\bbZ$. One
verifies that $F^{\tn}_A$ respects these structures.

\vskip8pt

Example: Let $r\in\Gamma$ and consider $B=k[r^{-1}T]$. The Hopf
algebra structure on $B$ coming from $\mathbb{G}_a$ with origin
$T=0$ is given by
$$c(T)=1\otimes T +T \otimes 1,\hskip5pt i(T)=-T \hskip5pt{\rm
and}\hskip5pt \epsilon(T)=0$$ and is therefore graded. %We write
%$$\mathbb{G}^{(r)}_{a,k}:=Spec_\Gamma(k[r^{-1}T]).$$ %The same applies {\it
%mutatis mutandis} to $k[r_1^{-1}T_1,...,r_n^{-1}T_n]$ in case of a
%tuple $(r_1,...,r_n)\in\Gamma^n$.
Conversely, any structure of graded Hopf algebra on $B$ arises
this way. Indeed, the discussion before \cite[Lemma 1.2]{Russell}
carries over to the graded setting and hence, if $t$ is a
homogeneous generator of the kernel of the augmentation, then
necessarily $c(t)=1\otimes t + t\otimes 1$ and $i(t)=-t$. The
canonical isomorphism
$$\theta^{\tn}(k[r^{-1}T]) \car k^{\tn}[r^{-1}T], \hskip8pt
x\otimes a\mapsto xa$$ is graded and transforms the Frobenius
$F^{\tn}_{B}$ into the graded homomorphism
\begin{numequation}\label{equ-Frob} k^{\tn}[r^{-1}T]\longrightarrow
(k[r^{-1}T])^{\tn}=k^{\tn}[r^{-1/p^n}T], \hskip8pt T\mapsto
T^{p^n}.\end{numequation}

%\begin{lemma}
%Let $A$ be a form of $k[r^{-1}T]$ with respect to an extension
%$k\subseteq\ell$. Suppose that $A$ has a graded Hopf algebra
%structure. Then this structure is unique up to choice of origin
%and there is a graded isomorphism $A_\ell\simeq \ell[r^{-1}]$ that
%preserves the Hopf structures.
%\end{lemma}
%\vskip8pt

\vskip8pt

Let $r\in\Gamma$. A $p$-polynomial
$f(T)=a_0T+a_1T^p+\cdot\cdot\cdot +a_mT^{p^m}\in k[r^{-1}T]$ is
homogeneous of degree $\gamma\in\Gamma$ if and only if $a_i\in k$
is homogeneous of degree $\gamma r^{-p^{i}}$.

\vskip8pt

Let $n\geq 1$ and let $\ell$ be the graded field
$$\ell:=k^{p^{-n}}=\{ a\in\bar{k}: a^{p^n}\in k\}.$$ It is a purely inseparable (possibly infinite) extension of $k$. Let $r\in \Gamma$ and let $A$ be a
graded form of $B:= k[r^{-1}T]$ with respect to
$\ell/k$ that is also a graded Hopf algebra. %In the following we
%adapt Russell's original argument (\cite[Thm.2.1]{Russell}) to the
%graded setting and determine all such $A$.
We start with the following general construction. Let $s\in\Gamma$
and choose
$$f(T_1)=a_0T_1+a_1T^p+\cdot\cdot\cdot +a_mT^{p^m}\in
k[r^{-p^n}T_1],$$ a homogeneous $p$-polynomial of degree $s^{p^n}$
with $a_0\neq 0$. The graded quotient
$$A:= k[r^{-p^n}T_1,s^{-1}T_2]/(T_2^{p^n}-f(T_1))$$ inherits a
Hopf structure from $k[r^{-p^n}T_1,s^{-1}T_2]$ coming from the $2$-dimensional additive group of with unit element $T_1=T_2=0$. Since $a_0\neq
0$ the tensor product $\ell\otimes_k A$ is reduced. %Furthermore,
%$a_0$ is homogeneous of degree
%$s^{p^n}\cdot(r^{-p^n})^{-1}=(sr)^{p^n}$.
%This means that $T_1\mapsto T_1,
%T_2\mapsto a_0^{1/p^n}T_2$ induces a graded Hopf isomorphism
%$$ A\car k[r^{-p^n}T_1,
%rT_2]/((a_0^{1/p^n}T_2)^{p^n}-f(T_1))=k[r^{-p^n}T_1,
%rT_2]/(T_2^{p^n}-a_0^{-1}f(T_1)).$$ Now
%$a_0^{-1}f(T_1)=T_1+\cdot\cdot\cdot+a_0^{-1}a_mT_1^{p^m}\in
%k[r^{-p^n}T_1, rT_2]$ is a homogeneous $p$-polynomial of degree
%$r^{-1}$. In proving that $A$ is a graded form of $B$ we may and
%will therefore assume $a_0=1$ (and $s^{p^n}=r^{-1}$).

\vskip8pt

For any polynomial $h\in k[T]$ we let $h^{\phi^n}$ be the
polynomial that comes from $h$ by applying $\phi^n$ to its
coefficients. %If $h\in k[\gamma^{-1}T]$ is homogeneous of degree
%$\delta$ then $h^{\phi^n}$ is homogeneous of degree $\delta$ when
%viewed in $k^{\tn}[\gamma^{-1}T]$ and homogeneous of degree
%$\delta^{p^n}$ when viewed in $k[\gamma^{-p^n}T]$.
Now let $x$ respectively $y$ be the residue class of $T_1$ respectively $T_2$ in
the quotient $A$. We then have
$$ 1\otimes a_0x= 1\otimes y^{p^n}-1\otimes
a_mx^{p^m}-...-1\otimes a_1x^p= (1\otimes
y^{p^{n-1}}-(a_m^{p^{n-1}}\otimes
x^{p^{m-1}}+...+a_1^{p^{n-1}}\otimes x))^p=: t_1^p$$ in
$\theta^{\tn}A$ with a homogeneous element $t_1$ of degree
$s^{p^{n-1}}$. Let $h:=a_0^{-1}f$ and $b_i:=a_0^{-1}a_i$. %so that $h^{\phi^n}\in
%k^{\tn}[r^{-p^n}T_1]$. %be the homogeneous polynomial %of degree
%$s^{p^n}$
%that comes from $h$ by applying $\phi^n$ to its coefficients.
Then
$$ 1\otimes y^{p^n}=1\otimes f(x)=f^{\phi^n}(1\otimes
x)=h^{\phi^n}(1\otimes
a_0x)=h^{\phi^n}(t_1^p)=h^{\phi^{n-1}}(t_1)^p$$ in
$\theta^{\tn}A$. Since $\theta^{\tn}A$ is reduced, this implies
\begin{numequation}\label{equ-algo}1\otimes y^{p^{n-1}}=h^{\phi^{n-1}}(t_1)\end{numequation} in $\theta^{\tn}A$
viewing the coefficients of $h^{\phi^{n-1}}$ as elements of
$k^{\tn}$. Writing this out yields
$$ t_1= 1\otimes y^{p^{n-1}}-
b^{p^{n-1}}_mt_1^{p^m}-...-b^{p^{n-1}}_1t_1^p= (1\otimes
y^{p^{n-2}}-(b_m^{p^{n-2}} t_1^{p^{m-1}}+...+b_1^{p^{n-2}}
t_1))^p=: t_2^p$$ in $\theta^{\tn}A$ with a homogeneous element
$t_2$ of degree $s^{p^{n-2}}$. Now $$1\otimes
y^{p^{n-1}}=h^{\phi^{n-1}}(t_1)=h^{\phi^{n-1}}(t_2^p)=
h^{\phi^{n-2}}(t_2)^p$$ implies $1\otimes
y^{p^{n-2}}=h^{\phi^{n-2}}(t_2)$. Comparing with (\ref{equ-algo})
we continue in this way and eventually find a homogeneous element
$t:=t_n\in\theta^{\tn}A$ of degree $s$ such that $1\otimes y=h(t)$
and $1\otimes a_0x=t^{p^n}$. Since $\theta^{\tn}A$ is generated
over $k^{\tn}$ by $1\otimes y$ and $1\otimes a_0x$ the inclusion
$k^{\tn}[t]\subseteq\theta^{\tn}A$ is surjective. Now $t$ is
transcendental over $k^{\tn}$ and there is an isomorphism of
graded $\ell$-algebras
\begin{numequation}\label{equ-trivialization} A_\ell\car
\ell[s^{-1}T], \hskip5pt t\mapsto T\end{numequation} according to
(\ref{equ-FrobII}). In particular, $A\otimes_k A$ is reduced.
Since $c(x)=1\otimes x+x\otimes 1$ we may deduce from $1\otimes
a_0x=t^{p^n}$ that $\bar{c}(t)=1\otimes t+t\otimes 1$ where
$\bar{c}$ denotes the comultiplication in $\theta^{\tn}A$ induced
from $A$. It follows that the above isomorphism is a Hopf
isomorphism when the target has its graded Hopf structure coming from the
additive group with unit element $T=0$.

\vskip8pt

For the choice $s\in \rho(\ell^\times) r$ in $\Gamma$ this yields
graded Hopf algebras $A$ over $k$ which are forms of $B$ with
respect to $\ell/k$.

\vskip8pt

On the other hand, let $A$ be a graded form of $B:= k[r^{-1}T]$
with respect to $\ell/k$ that is also a graded Hopf algebra. Let
$c$ be the comultiplication of $A$ and let $\bar{c}$ be the
induced comultiplication on the base change $\theta^{\tn}A$.
According to (\ref{equ-FrobI}) we have an isomorphism of
$k^{\tn}$-algebras
\begin{numequation}\label{equ-trivializationI}\theta^{\tn}A\car k^{\tn}[r^{-1}T]\end{numequation} which we use as an
identification. Via transport of structure we view the right hand
side as a graded Hopf algebra over $k^{\tn}$. If $t$ denotes a
homogeneous generator of the kernel of the augmentation, then
$t=T-a$ with $a\in (k^{\tn})_r$. By replacing $T$ with $T-a$ we
may therefore assume that $\bar{c}(T)=1\otimes T+T\otimes 1$. Using (\ref{equ-trivializationI}) as an identification, we may
write
\begin{numequation}\label{equ-T}T=\sum_{i} a_i\otimes y_i\end{numequation} with $a_i\in
k^{\tn}$ and $y_i\in A$ where each $a_i\otimes y_i$ is homogeneous
of degree $r$. We may assume that the $a_i$ are a linearly
independent family of homogeneous elements in the graded
$k$-vector space $k^{\tn}$. Then each $y_i\in A$ is homogeneous
with degree, say, $r_i$. Moreover,
$$ \sum_i a_i\otimes c(y_i)=\bar{c}(\sum_{i} a_i\otimes y_i)=\bar{c}(T)=1\otimes T+T\otimes
1=\sum_i a_i\otimes y_i\otimes 1+a_i\otimes 1\otimes y_i$$ implies
$$c(y_i)=1\otimes y_i+y_i\otimes 1$$
for all $i$. %Hence, the element $1\otimes y_i\in k^{\tn}[r^{-1}T]$
%defines an endomomorphism of the graded Hopf algebra
%$k^{\tn}[r^{-1}T]$ of degree $r^{-1}r_i$.
By the discussion in subsection \ref{subsec-addgroup}, $1\otimes
y_i$ equals under the identification (\ref{equ-trivializationI}) a
$p$-polynomial $f_i\in k^{\tn}[r^{-1}T]$ which is homogeneous of
degree $r_i$.

\vskip8pt

Consider now the graded subring $k[y_1,...,y_n]\subseteq A$
generated by the $y_i$. By (\ref{equ-T}) the inclusion map becomes
surjective after the faithfully flat base change $\phi^n:
k\rightarrow k^{\tn}$ and therefore $k[y_1,...,y_n]=A$. Hence the
graded fraction field $F:={\rm Frac}_\Gamma(A)$ of $A$ is
generated over $k$ by $y_1,...,y_n$. Since $\ell\otimes_k F={\rm
Frac}_\Gamma (\ell [r^{-1}T])$ it follows that ${\rm trdeg}_k(F)=1$
(Lem. \ref{lem-trdeg}) and there is an element, say $y$, among the
homogeneous elements $y_i$ which forms a separating transcendence
basis of $F/k$ (Lem. \ref{lem-septransbasis}). Let $s$ be the
degree of $y\in A$. By our above discussion we have $$1\otimes
y=f(T)\in\theta^{\tn}A=k^{\tn}[r^{-1}T]$$ with a $p$-polynomial
homogeneous of degree $s$
$$f(T)=a_0T+a_1T^p+\cdot\cdot\cdot a_mT^{p^m}\in k^{\tn}[r^{-1}T]$$ with $a_m\neq 0.$
Applying the graded homomorphism $F^{\tn}_A$ to $T\in
k^{\tn}[r^{-1}T]=\theta^{\tn}A$ yields a homogeneous element
$x:=F^{\tn}_A(T)$ of degree $r$ in $A^{\tn}$ that satisfies
$$y^{p^n}=F^{\tn}_A(1\otimes
y)=F^{\tn}_A(f(T))=f(F^{\tn}_A(T))=f(x)$$ in $A^{\tn}$ and
$c(x)=1\otimes x+x\otimes 1.$
% Note that f_i(T) \in k^{\tn}[r^{-1}T] is of degree r_i
% So if f_i =: \sum_k a_k T^k, then a_k is of degree r_i r^{-k} in k^{\tn}
% Hence a_k is of degree ( r_i r^{-k} )^{p^n} in k
% Since x is of degree r^{p^n} in A and there fore x^k is of degree r^{k p^n} in A, it follows that a_k x^k is of degree ( r_i )^{p^n} in A
% Hence both sides of the identity y^{p^n}=f(x) are of degree (r_i)^{p^n} in A as it should be.
\vskip8pt By choice of $y$ the extension of graded fields $F/k(y)$
is separable and so is the extension $F/k(x,y)$. But for
all $i$ we have in $A^{\tn}$ that $y_i^{p^n}=f_i(x)$. We read this
equation in $A$ which implies that $y_i$ is purely inseparable over $k(x)$. %Also, we may view $f_i$ a homogeneous
%$p$-polynomial in $k[r^{-p^n}T]$ of degree $r_i^{p^n}$. Indeed,
%looking at a monomial $a_jT^{p^j}$ of $f_i$ the coefficient $a_j\in
%k^{\tn}$ satisfies $\rho_k(a_j)^{1/p^n}= r_i r^{-p^{j}}$ where
%$\rho_k$ is the grading of $k$ and so, viewing $a_j$ as an element
%in $k$, it has degree $r_i^{p^n} (r^{p^n})^{-p^{j}}$.
% Hence a_k is of degree ( r_i r^{-k} )^{p^n} in k. So if T is declared to be of degree r^{p^n},
% then a_k T^k is homogeneous of degree ( r_i )^{p^n} in k[r^{-p^n}T] and so f_i(T) is homogeneous of degree ( r_i )^{p^n} in k[r^{-p^n}T]
Since $F=k(y_1,...,y_n)$ this means that $F/k(x,y)$ is purely
inseparable. Consequently, $F=k(x,y)$. We also see that
$A=k[y_1,...,y_n]$ is a graded integral extension of $k[x,y]$.

\vskip8pt

On the other hand, $\theta^{\tn}$ preserves graded direct sums and
so the degree $[F:k(y)]$ equals the degree of the free graded
$\theta^{\tn}k(y)$-module $\theta^{\tn}F$. Since this module
equals ${\rm Frac}_\Gamma(k^{\tn}[r^{-1}T])$ by
(\ref{equ-trivializationI}), this degree equals the monomial
degree in $T$ of the element $1\otimes y=f(T)$ of
$\theta^{\tn}A=k^{\tn}[r^{-1}T]$ which is $p^m$. Since $F=k(x,y)$,
the homogeneous polynomial
$$h(T):=-y^{p^n}+f(T)=-y^{p^n}+a_0T+a_1T^p+\cdot\cdot\cdot +a_mT^{p^m}\in
k(y)[r^{-p^n}T]$$ of degree $s^{p^n}$, which annihilates $x$ and
has monomial degree $p^m$, must be irreducible. The separability
of $x$ over $k(y)$ implies then that $a_0\neq 0$
(\cite[1.14.1]{Ducros}). Since $a_i\in k$ and $y$ is
transcendental over $k$ the graded Gauss lemma (cf. proof of Lem.
\ref{lem-gauss}) shows that $h(T)$ is also irreducible in
$k[y][r^{-p^n}T]$. We thus have an isomorphism of graded Hopf
$k$-algebras
\begin{numequation}\label{equ-trivializationII} k[r^{-p^n}T_1,s^{-1}T_2]/(T_2^{p^n}-f(T_1))\car
k[x,y]\end{numequation} induced by $T_1\mapsto x, T_2\mapsto y$
where the Hopf structure on the source comes from the $2$-dimensional additive group with unit element $T_1=T_2=0$. At this point we
need a lemma.
\begin{lemma}
The graded domain $k[x,y]$ is graded integrally closed.
\end{lemma}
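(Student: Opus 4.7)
The plan is to deduce that $A = k[x,y]$ is graded integrally closed from the corresponding property of its base change to $\ell$, which by the trivialization (\ref{equ-trivialization}) is a graded polynomial ring, and then to descend along the faithfully flat extension $k \subseteq \ell$.

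Set $R := A \otimes_k \ell$. By (\ref{equ-trivialization}), $R$ is isomorphic as a graded $\ell$-algebra to the graded polynomial ring $\ell[s^{-1}T]$. The latter is a graded principal ideal domain, hence graded integrally closed in its graded fraction field $E := \Frac_\Gamma(R) = \ell(s^{-1}T)$. Writing $F := \Frac_\Gamma(A)$, the tensor product $F \otimes_k \ell$ is canonically identified with the localization of $R$ at the image of the multiplicative set of homogeneous nonzero elements of $A$; since $R$ is a graded domain, this localization is a graded subring of $E$, so $F$ itself embeds into $E$ via the composite $F \hookrightarrow F \otimes_k \ell \hookrightarrow E$.

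Now take a homogeneous $z \in F$ that is graded-integral over $A$. Ascent of integrality along $A \to R$ shows that $z \otimes 1 \in F \otimes_k \ell \subseteq E$ is graded-integral over $R$, and the graded integral closedness of $R$ then gives $z \otimes 1 \in R$. To descend, write $z = p/q$ with $p, q \in A$ homogeneous and $q \neq 0$. The membership $z \otimes 1 \in R$ translates to $p \in qR \cap A$. Faithful flatness of $\ell$ over $k$ makes $R$ faithfully flat over $A$, so the canonical map $A/qA \to R/qR$ is injective, equivalently $qR \cap A = qA$. Hence $p \in qA$, i.e., $z \in A$.

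The only point requiring care is this last descent step via faithful flatness; the identification of $R$ with a graded polynomial ring is already at hand from (\ref{equ-trivialization}), and the remainder of the argument amounts to routine ascent of integrality together with the standard description of $F \otimes_k \ell$ as a localization of $R$.
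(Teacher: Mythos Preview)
Your argument is correct and takes a genuinely different route from the paper. The paper works locally: it views $k[x,y]$ as the extension $R=k[y]\to S=R[r^{-p^n}T]/(h)$, observes that $h'=a_0\in R^\times$ makes this standard-\'etale in the graded sense, and deduces that for each $Q\in\Spec_\Gamma(S)$ the maximal homogeneous ideal of $S_Q$ is generated by a generator of $PR_P$ where $P=Q\cap R$; since $R$ is a graded PID this forces each $S_Q$ to be a graded DVR, hence integrally closed. You instead exploit the already-established presentation (\ref{equ-trivializationII}) together with the trivialization (\ref{equ-trivialization}) to identify $k[x,y]\otimes_k\ell$ with the graded polynomial ring $\ell[s^{-1}T]$, and then descend integral closedness along the faithfully flat (indeed free) extension $k\subseteq\ell$.

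Your approach is shorter and avoids setting up graded \'etale machinery; it also makes transparent why the result holds (the form becomes a graded PID after base change). The paper's approach, by contrast, is intrinsic to $k[x,y]$ and yields the finer information that each graded localization is a graded DVR, without invoking the trivialization over $\ell$. One small point: your citation of (\ref{equ-trivialization}) implicitly goes through (\ref{equ-trivializationII}), since at this stage of the argument $k[x,y]$ has only just been identified with the quotient $k[r^{-p^n}T_1,s^{-1}T_2]/(T_2^{p^n}-f(T_1))$; it would be cleaner to say this explicitly.
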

\begin{proof}
We first show that the graded local ring $(k[x,y])_Q$ for any
$Q\in \Spec_\Gamma(k[x,y])$ is integrally closed. Since this graded
local ring is graded noetherian it suffices, according to Lem.
\ref{lem-DVR}, to check that its maximal homogeneous ideal is
generated by a non-nilpotent homogeneous element. To do this let
$R:=k[y]$ and consider the graded ring
extension $$R\rightarrow R[r^{-p^n}T]/(h)=:S.$$ %In particular, $\Omega^1_{S/R}=0$ for the
%graded differential module (\cite[1.8]{Ducros}).
Of course, $S\simeq k[x,y]$. The formal derivative of the
polynomial $h$ is the unit $h'=a_0\neq 0$ in $R[r^{-p^n}T]$. The
usual argument proving that so-called standard-\'etale extensions
are \'etale and therefore unramified (e.g. \cite[Thm.
4.2]{KnusOjanguren}) carries over to the graded setting: for the
prime homogeneous ideal $P=Q\cap R$ of $R$ with graded residue
field $\kappa(P)=R_P/PR_P$ the graded $\kappa(P)$-algebra
$\kappa(P)\otimes_R S_Q$ equals a finite product of separable
finite graded field extensions of $\kappa(P)$ and, at the same
time, is a graded local ring because of $PS_Q\subseteq QS_Q$. It
follows that $S_Q/PS_Q$ is a finite and separable graded field
extension of $\kappa(P)$ and, in particular, $P$ generates the
maximal homogeneous ideal of $S_Q$. Since $R$, being isomorphic to
$k[r^{-1}T]$ via $y\mapsto T$, is a graded principal ideal domain,
its local graded rings are graded discrete valuation rings and so
$P$ is generated by a homogeneous non-nilpotent element (Lem.
\ref{lem-DVR}). Thus, $S_Q$ is integrally closed for any $Q$ and
therefore $S\simeq k[x,y]$ is integrally closed, cf. last sentence
in A.2. This proves the lemma.
\end{proof}
Since $A$ is integral over $k[x,y]$ the lemma together with
$F=k(x,y)$ implies that $A=k[x,y].$ The isomorphism
(\ref{equ-trivializationII}) allows us to apply to $A$ our
previous discussion which yields an isomorphism $A_\ell\car
\ell[s^{-1}T]$, cf. (\ref{equ-trivialization}). In view of
(\ref{equ-trivializationI}) we conclude $s\in\rho(\ell^\times)r$
in $\Gamma$. We may summarize the whole discussion in the
following theorem.
\begin{thm}\label{thm-russellgraded}
Let $\ell=k^{p^{-n}}$ and $B:= k[r^{-1}T]$. Let
$s\in\rho(\ell^\times)r$ in $\Gamma$. Let
$$f(T_1)=a_0T_1+a_1T^p+\cdot\cdot\cdot +a_mT^{p^m}\in
k[r^{-p^n}T_1]$$ be a $p$-polynomial homogeneous of degree
$s^{p^n}$ with $a_0\neq 0$. The graded Hopf $k$-algebra
$$A:=k[r^{-p^n}T_1,s^{-1}T_2]/(T_2^{p^n}-f(T_1))$$
is a graded form of $B$ with respect to $\ell/k$. Conversely, any
graded Hopf $k$-algebra, which is a form of $B$ with respect to
$\ell/k$, is of this type.
\end{thm}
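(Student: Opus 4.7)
The plan is to prove the two directions of the theorem by exploiting the Frobenius twist functor $\theta^{\tn}$ introduced earlier, which converts questions about forms over the purely inseparable extension $\ell = k^{p^{-n}}$ into questions about trivializations after a Frobenius base change via the isomorphism \eqref{equ-FrobII}. Throughout, I will systematically use the graded Hopf structure: the additive group law gives the comultiplication $c(T)=1\otimes T+T\otimes 1$, and any homogeneous generator of the augmentation ideal must be primitive.

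For the first direction, given $f$ as in the theorem, I set $A = k[r^{-p^n}T_1,s^{-1}T_2]/(T_2^{p^n}-f(T_1))$ with images $x,y$ of $T_1,T_2$. The goal is to produce a primitive homogeneous element $t\in\theta^{\tn}A$ of degree $s$ with $t^{p^n}=1\otimes a_0x$, since this together with the relation $1\otimes y^{p^n}=f^{\phi^n}(1\otimes x)$ will force $\theta^{\tn}A=k^{\tn}[t]\simeq k^{\tn}[s^{-1}T]$ and hence $A_\ell\simeq\ell[s^{-1}T]$ by \eqref{equ-FrobII}. The key maneuver is an inductive extraction of $p$-th roots inside the reduced ring $\theta^{\tn}A$: setting $h:=a_0^{-1}f$, rewrite the defining relation as $(1\otimes y^{p^{n-1}} - P_1)^p = 1\otimes a_0 x$ for a polynomial $P_1$ in $1\otimes x$ with $p^{n-1}$-th power coefficients, producing a homogeneous $t_1$ of degree $s^{p^{n-1}}$; then use reducedness and the identity $1\otimes y^{p^{n-1}}=h^{\phi^{n-1}}(t_1)$ to iterate and pass successively from $t_i$ to a $p$-th root $t_{i+1}$ of degree $s^{p^{n-i-1}}$. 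After $n$ steps one reaches $t=t_n$ of degree $s$ with $t^{p^n}=1\otimes a_0x$, and primitivity of $t$ is inherited from $x$.

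For the converse, let $A$ be any graded Hopf form of $B$ relative to $\ell/k$. Using \eqref{equ-FrobI} to trivialize $\theta^{\tn}A\simeq k^{\tn}[r^{-1}T]$ as a graded $k^{\tn}$-algebra, I first normalize by translating $T$ so that it is primitive for the induced comultiplication $\bar c$. Writing $T=\sum_i a_i\otimes y_i$ with $k$-linearly independent homogeneous $a_i\in k^{\tn}$, each $y_i\in A$ must be homogeneous and primitive under $c$, and faithful flatness of $\phi^n$ forces $A=k[y_1,\dots,y_n]$. Because ${\rm Frac}_\Gamma(A)\otimes_k\ell$ has transcendence degree $1$ over $\ell$, Lemmas \ref{lem-trdeg} and \ref{lem-septransbasis} yield some $y$ among the $y_i$ furnishing a separating transcendence basis of $F:={\rm Frac}_\Gamma(A)$ over $k$. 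The primitivity of the $y_i$ together with subsection \ref{subsec-addgroup} expresses $1\otimes y_i$ as a homogeneous $p$-polynomial $f_i(T)\in k^{\tn}[r^{-1}T]$; applying $F^{\tn}_A$ to $T$ produces a primitive homogeneous $x$ of degree $r$ in $A^{\tn}$ with $y^{p^n}=f(x)$ for the $p$-polynomial $f=f_i$ attached to $y$, and similarly each $y_i$ is purely inseparable over $k(x)$. Thus $F=k(x,y)$, and $A$ is graded integral over $k[x,y]$.

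The main obstacle — and the step I would spend most care on — is to show $A=k[x,y]$ with the presentation asserted. For this I need two things: that $h(T):=-y^{p^n}+f(T)\in k(y)[r^{-p^n}T]$ is irreducible of the correct monomial degree, and that the candidate subring $k[x,y]$ is already graded integrally closed. Irreducibility follows because $\theta^{\tn}$ preserves graded dimensions, so $[F:k(y)]$ equals the $T$-degree $p^m$ of $1\otimes y=f(T)$, and the graded Gauss lemma (as in the proof of Lemma \ref{lem-gauss}) promotes irreducibility from $k(y)$ to $k[y]$; separability of $x$ over $k(y)$ forces $a_0\neq 0$ via \cite[1.14.1]{Ducros}. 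For graded integral closedness of $k[x,y]$, I will verify it localization by localization using the graded Serre-type criterion: the graded standard-\'etale argument shows that the ring extension $k[y]\to k[y][r^{-p^n}T]/(h)$ is graded unramified since $h'=a_0$ is a unit, so each graded localization $(k[x,y])_Q$ is a graded discrete valuation ring by Lemma \ref{lem-DVR}. Integrality of $A$ over $k[x,y]$ together with $F=k(x,y)$ then forces $A=k[x,y]$, and \eqref{equ-trivializationII} combined with the first direction of the proof gives both the presentation and the value $s\in\rho(\ell^\times)r$.
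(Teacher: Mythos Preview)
Your proposal is correct and follows essentially the same approach as the paper's proof, which is in fact the long discussion preceding the theorem statement in subsection~\ref{subsection-gradedformsofRusselltype}: the iterative $p$-th root extraction in $\theta^{\tn}A$ for the forward direction, and for the converse the normalization of $T$ to a primitive element, the faithful-flatness descent to $A=k[y_1,\dots,y_n]$, the separating transcendence basis via Lemmas~\ref{lem-trdeg} and~\ref{lem-septransbasis}, the degree count $[F:k(y)]=p^m$ via $\theta^{\tn}$, and the graded integral closedness of $k[x,y]$ through the standard-\'etale argument and Lemma~\ref{lem-DVR}. The only points you leave implicit that the paper makes explicit are the reducedness of $\theta^{\tn}A$ (needed to extract roots, and following from $a_0\neq 0$) and of $A\otimes_k A$ (needed to deduce primitivity of $t$ from that of $x$), but these are straightforward once noted.
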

%Let $A$ be as in the theorem and let $\xi:
%k[r^{-p^n}T_1]\rightarrow A$ and $\eta: k[s^{-1}T_2]\rightarrow A$
%be the corresponding maps. Then $A$ fits into the co-cartesian
%diagram of homomorphisms
%\[\xymatrix{
 % A & k[r^{-p^n}T_1] \ar[l]^{\xi} \\
  %k[s^{-1}T_2] \ar[u]^{\eta} & k[s^{-p^n}T] \ar[l]^{F^n} \ar[u]^{\tau} }
%\]
%where $F^n(T):=T_2^{p^n}$ is the $n$-th Frobenius map, cf.
%(\ref{equ-Frob}), and $\tau(T):=f(T_1).$ In this diagram all
%homomorphism except maybe $\tau$ are graded (of degree $1$).

Remark: In \cite{Russell} P. Russell determines all forms of the
additive group over a field of positive characteristic. Our above
theorem is a version for graded fields of loc.cit., Thm. 2.1 and
its proof. Following \cite[Part I.\S2]{KMT} we tentatively call
the graded forms appearing in the above theorem {\it of Russell
type}. It is easy to see that a graded version of \cite[Lem.
1.1.i]{Russell} holds: any $A$ as above admits a trivialization
$A_{\ell'}\car B_{\ell'}$ which is defined over a {\it finite}
subextension $\ell'/k$ inside $\ell$.

\vskip8pt

To determine how many graded forms of Russell type exist, one
needs to know when such a form is trivial and when two such forms
are isomorphic. In the ungraded case there is a very precise
answer to these two questions (loc.cit., Cor. 2.3.1 and Thm. 2.5).
In the presence of gradings the arguments of loc.cit. that lead to
this answer seem to allow no direct generalization. For example,
the Frobenius map $T\mapsto T^p$ is {\it not} a graded degree $1$
endomorphism of $B$ if $r\neq 1$. Hence, at this point we will say
something only in case $r=1$ and leave the general case for future
work.

\vskip8pt

In case $r=1$ the grading $\rho_A$ of a form $A$ of Russell type
has image in $\rho(k^\times)$. Hence the graded algebra $A$ is
{\it induced} from $A_1$ in the sense of graded Frobenius
reciprocity \cite[Thm. 2.5.5]{NastasescuVO}. In particular, the
whole situation is governed by the homogeneous parts of degree
$1$. To be more precise, any $p$-polynomial $f(T)\in k[T]$
homogeneous of degree $1$ lies in $k_1[T]$ and uniquely
corresponds to an endomorphism
of $\mathbb{G}_{a,k_1}$, the additive group over $k_1$ (subsection \ref{subsec-addgroup}). %Since $k_1\subseteq\ell_1$ is purely
%inseparable w
Moreover, $\ell_1=(k_1)^{p^{-n}}.$ Recall that $k_1[F]^{(n)}$
equals the image of $k_1[F]$ under the map
$\tau\mapsto\tau^{(n)}$. Let $B=k[T]$ as graded $k$-algebra (i.e.
$r=1$).

\begin{cor}
Let $f(T_1)=a_0T_1+a_1T^p+\cdot\cdot\cdot +a_mT^{p^m}\in k[T_1]$
be a $p$-polynomial homogeneous of degree $1$ with $a_0\neq 0$ and
let $\tau$ be its endomorphism of $\mathbb{G}_{a,k_1}$. The graded
Hopf $k$-algebra
$$A(\tau):=k[T_1,T_2]/(T_2^{p^n}-f(T_1))$$
is a graded form of $B$ with respect to the extension
$\ell=k^{p^{-n}}$ of $k$. The form $A(\tau)$ is trivial if and
only if $\tau c\in k_1[F]^{(n)}$ for some $c\in k_1^\times$. In
particular, $k_1$ is perfect if and only if all $A(\tau)$ are
trivial.
\end{cor}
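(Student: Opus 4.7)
My plan is to verify that $A(\tau)$ is a graded form via the structure theorem just proven, reduce the triviality question to the classical ungraded Russell situation over $k_1$, and then appeal to Russell's criterion. Applying Theorem~\ref{thm-russellgraded} with $r=s=1$ (noting $1\in\rho(\ell^\times)$) immediately identifies $A(\tau)$ as a graded Hopf form of $B=k[T]$ with respect to $\ell/k$. Since $r=1$, the grading on $A(\tau)$ and on $B$ takes values in $\rho(k^\times)$, and graded Frobenius reciprocity \cite[Thm.~2.5.5]{NastasescuVO} provides canonical graded Hopf $k$-algebra isomorphisms $A(\tau)\simeq k\otimes_{k_1}A(\tau)_1$ and $B\simeq k\otimes_{k_1}k_1[T]$. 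Consequently, $A(\tau)$ is trivial as a graded Hopf $k$-form if and only if its degree-$1$ part $A(\tau)_1=k_1[T_1,T_2]/(T_2^{p^n}-f(T_1))$ is Hopf $k_1$-isomorphic to $k_1[T]$.

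Since $f$ is homogeneous of degree $1$, its coefficients lie in $k_1$, so $A(\tau)_1$ is precisely the classical Russell form of $\mathbb{G}_{a,k_1}$ associated with the separable endomorphism $\tau\in k_1[F]^*$ relative to the purely inseparable extension $\ell_1/k_1=k_1^{p^{-n}}/k_1$. The substantive input is then Russell's triviality criterion \cite[Cor.~2.3.1]{Russell}: this ungraded form is trivial if and only if $\tau c\in k_1[F]^{(n)}$ for some $c\in k_1^\times$. This yields the main equivalence of the corollary.

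For the ``in particular'' statement, if $k_1$ is perfect then $k_1=k_1^{p^n}$, hence $k_1[F]=k_1[F]^{(n)}$ and every $\tau$ meets the criterion with $c=1$. For the converse I would test the hypothesis against the specific separable endomorphism $\tau:=1+aF^n$ for arbitrary $a\in k_1^\times$: triviality of $A(\tau)$ supplies $c\in k_1^\times$ with $\tau c=c+ac^{p^n}F^n\in k_1[F]^{(n)}$, forcing both $c$ and $ac^{p^n}$ to lie in $k_1^{p^n}$. Writing $c=d^{p^n}$ with $d\in k_1^\times$ and $ac^{p^n}=ad^{p^{2n}}=e^{p^n}$ gives $a=(ed^{-p^n})^{p^n}\in k_1^{p^n}$, so $k_1=k_1^{p^n}$ and $k_1$ is perfect. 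The main obstacle is the appeal to Russell's classical result; once that is in hand, the rest consists of the graded/ungraded reduction above together with the explicit probe by $\tau=1+aF^n$.
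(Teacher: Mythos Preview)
Your argument is correct and follows essentially the same route as the paper: reduce to degree~$1$ via graded Frobenius reciprocity (the paper invokes its Lem.~\ref{lem-extension} for this), then appeal to Russell's Cor.~2.3.1. The only notable difference is in the converse of the ``in particular'' clause: the paper tests with a $\tau$ whose coefficient $a_1$ at $F$ lies outside $k_1^{p}$ and observes $\tau c\notin k_1[F]^{(1)}\supseteq k_1[F]^{(n)}$, whereas you probe with $\tau=1+aF^{n}$ and deduce $a\in k_1^{p^n}$ directly; both work.

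One small point worth tightening: you phrase the reduction in terms of triviality ``as a graded Hopf $k$-form,'' while the corollary (and the subsequent Cor.~\ref{cor-orbits}) is about triviality as a graded \emph{algebra} form. These notions coincide here because, as noted in the Example preceding Thm.~\ref{thm-russellgraded}, any graded Hopf structure on $B=k[T]$ is the additive one up to an affine change of coordinate; it would be cleaner to say so explicitly.
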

\begin{proof}
Passing to the homogeneous part of degree $1$ induces a map from
the set of isomorphism classes of the $A(\tau)$ to the set of
forms of $\mathbb{G}_{a,k_1}$ with respect to the extension of
characteristic $p$-fields $k_1\subseteq\ell_1$. The map is
surjective by \cite[Thm 2.5]{Russell} and injective by graded
Frobenius reciprocity, cf. Lem. \ref{lem-extension}. Then
\cite[Cor. 2.3.1]{Russell} gives the first statement. If $k_1$ is
perfect, then $k_1[F]=k_1[F]^{(n)}$. If $k_1$ is not perfect,
there is a $p$-polynomial whose coefficient $a_1$ at $T^p$ is not
a $p$-power in $k_1$. If $\tau$ is the corresponding endomorphism,
then $\tau c\notin k_1[F]^{(1)}$ for any $c\in k_1^\times$ and
hence $A(\tau)$ is nontrivial.
%tau corresponds to a_0+a_1F+... with a_1 not a p-power. Since Fc=c^pF it follows that \tau c corresponds to a_0c+a_1c^pF+.... Since a_1c^p is not a
%p-power it follows that \tau c does not lie in k_1[F]^{(1)} and a fortiori not in k_1[F]^{(n)}. Hence A(tau) is not trivial.which completes the
%proof.
\end{proof}
Recall that $H^1(\ell/k, {\bf \Aut}^{\rm gr}\; B)$ equals the
pointed set of graded forms of $B$ with respect to $\ell/k$.
Recall that the group $U_n$ equals the image of $k[F]^*$ under the
projection $k[F]\rightarrow k[F]/k[F]F^n$. Let
$$G_n:=U_n\times k_1^\times$$ be the direct product of the multiplicative groups
$U_n$ and $k_1^\times$. There is an action of $G_n$ on the pointed
set $U_n$ given by
$$ (\bar{\sigma},c). \bar{\tau}:= (\sigma^{(n)}\tau c^{-1}) {\rm
~mod~} (F^n).$$ Here, $\bar{\sigma}=\sigma {\rm ~mod~}(F^n)$ is
the class of $\sigma\in k[F]^*$ in $U_n\subseteq k[F]/k[F]F^n$ and
similarly for $\tau$. Moreover, $\sigma^{(n)}$ equals the image of
$\sigma$ under the extension of $\phi^n$ to $k[F]$. Let $U_n/G_n$
be the pointed set of orbits corresponding to this action.

\vskip8pt

Example: In loc.cit., p. 534, there is the following description
of the set $U_n/G_n$ in the simplest case $n=1$. Let $W_0$ be a
complementary $\bbF_p$-subspace to $k_1^p\subseteq k_1$ (in case
$p\neq 1$) and for each $i\geq 1$ let $W_i$ be a copy of $W_0$.
Then $U_1=k_1^\times$ acts linearly on the space $W=\oplus_{i\geq
1} W_i$ by $c.(\sum_i a_i):= \sum_i c^{p(1-p^{i})}a_i$. Mapping
$\sum_i a_i$ to $1+\sum_i a_iF^{i}\in k[F]^*$ induces an
isomorphism of pointed sets
$$W/k^\times\car U_1/G_1.$$

\vskip8pt

 Back in the case of general $n\geq 1$, the proof of the
preceding corollary and loc.cit., Thm. 2.5 imply the following
corollary.
\begin{cor}\label{cor-orbits}
Let $\ell=k^{p^{-n}}$ and $r=1$. The map $$ k_1[F]^*\rightarrow
H^1(\ell/k, {\bf \Aut}^{\rm gr}\; B), \hskip10pt\tau\mapsto {\rm
isomorphism~class~of~}A(\tau)$$ induces an inclusion between
pointed sets
$$U_n/G_n\hookrightarrow H^1(\ell/k, {\bf \Aut}^{\rm gr}\; B).$$
\end{cor}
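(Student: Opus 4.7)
The plan is to reduce the classification to the ungraded case treated in [Russell, Thm.~2.5] by exploiting that the gradings on all forms $A(\tau)$ are concentrated in $\rho(k^\times)$, and then to transport Russell's bijection through graded Frobenius reciprocity.

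First, since $r=1$, each $p$-polynomial $f(T_1)$ appearing in $A(\tau)$ is homogeneous of degree~$1$ and so lies in $k_1[T_1]$; consequently the grading on $A(\tau)=k[T_1,T_2]/(T_2^{p^n}-f(T_1))$ takes values in $\rho(k^\times)\subseteq\Gamma$. By graded Frobenius reciprocity (Lem.~\ref{lem-extension}), passage to the degree-$1$ part gives a bijection between isomorphism classes of $A(\tau)$ as graded $k$-algebras and isomorphism classes of $A(\tau)_1 = k_1[T_1,T_2]/(T_2^{p^n}-f(T_1))$ as (ungraded) $k_1$-algebras. In particular, $A(\tau)\cong A(\tau')$ as graded $k$-algebras if and only if $A(\tau)_1\cong A(\tau')_1$ as $k_1$-algebras. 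This is the same reduction used in the proof of the preceding corollary.

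Second, I would observe that each $A(\tau)_1$ is precisely a Russell form of $\mathbb{G}_{a,k_1}$ relative to the extension $k_1\subseteq \ell_1=(k_1)^{p^{-n}}$. Russell's classification [Russell, Thm.~2.5] exhibits the map $\tau\mapsto A(\tau)_1$ from $k_1[F]^*$ into isomorphism classes of such forms as being well-defined modulo $F^n$ (so it factors through $U_n$) and as inducing an injection
$$U_n/G_n\;\hookrightarrow\; H^1(\ell_1/k_1,\mathbf{Aut}\,B_1)$$
of pointed sets, where the $G_n=U_n\times k_1^\times$ action is exactly the one stated before the corollary. Composing with the inverse of the bijection from the previous paragraph and then forgetting that the Hopf structure is preserved (keeping only the underlying graded $k$-algebra) yields the required injection $U_n/G_n\hookrightarrow H^1(\ell/k,\mathbf{Aut}^{\rm gr}\, B)$.

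The only nontrivial point is that forgetting the Hopf structure on the right-hand side does not collapse distinct orbits: two Russell forms $A(\tau)$, $A(\tau')$ that are isomorphic merely as graded $k$-algebras must lie in the same $G_n$-orbit. This is not a new input, however, because Thm.~\ref{thm-russellgraded} (combined with the explicit presentation of each $A(\tau)$ as $k[T_1,T_2]/(T_2^{p^n}-f(T_1))$ and the resulting trivialization over $\ell$) reduces such an abstract graded $k$-algebra isomorphism to an isomorphism of Russell forms of $\mathbb{G}_{a,k_1}$ once the Hopf structure (coming uniquely from the origin $T_1=T_2=0$) is restored on the degree-$1$ part. The only real obstacle is therefore bookkeeping: verifying that the action of $G_n$ on $U_n$ spelled out in the paper matches verbatim the equivalence relation in Russell's original theorem under the identifications $(k,\ell)\leftrightarrow(k_1,\ell_1)$ and $\tau\leftrightarrow A(\tau)_1$.
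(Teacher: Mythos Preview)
Your approach matches the paper's exactly: pass to degree-$1$ parts via graded Frobenius reciprocity (Lem.~\ref{lem-extension}) and then invoke \cite[Thm.~2.5]{Russell}, just as in the proof of the preceding corollary. The paper's own argument is literally the single sentence ``the proof of the preceding corollary and loc.~cit., Thm.~2.5 imply the following corollary.''

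One comment on the point you flag about forgetting the Hopf structure. You are right that this step needs justification (the paper leaves it implicit as well), but Thm.~\ref{thm-russellgraded} is not the tool that closes it: that theorem says every graded Hopf form of $B$ is some $A(\tau)$, not that a bare graded $k$-algebra isomorphism $A(\tau)\cong A(\tau')$ can be promoted to a Hopf isomorphism. The actual reason is elementary and already present in Russell's setting. After passing to degree~$1$, a $k_1$-algebra isomorphism $A(\tau)_1\cong A(\tau')_1$ becomes, over $\ell_1$, an automorphism $t\mapsto at+b$ of $\mathbb A^1$; composing first with the translation on the group scheme $\Spec A(\tau')_1$ that carries the image of the origin back to the origin (a $k_1$-morphism, since both points are $k_1$-rational) produces a $k_1$-isomorphism which is $t\mapsto at$ over $\ell_1$ and hence a group isomorphism by faithfully flat descent. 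Then \cite[Thm.~2.5]{Russell} applies as you intended. With this small repair your argument is complete and coincides with the paper's.
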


\subsection{A class of wildly ramified forms}

We let $\Gamma=\bbR^\times_+$ and let $k$ be a nonarchimedean
field which is complete with respect to a nontrivial absolute
value $|.|$ and whose residue field $\tk_1$ has characteristic
$p>0$.

\vskip8pt

Let $n\geq 1$. Let $k\subseteq\ell$ be a complete field extension
whose absolute value restricts to the one on $k$ and which has the
property $(\tgk)^{p^{-n}}\subseteq\tgell$.

%Remark: Suppose for a moment that the extension is Galois and
%denote for a moment by $Gal(\tgell/\tgk)$ and $Gal(\tell_1/\tk_1)$
%the automorphism groups of the extensions $\tk_1\subseteq\tell_1$
%and $\tgk\subseteq\tgell$. We have two canonical surjective maps
%\begin{numequation}\label{equ-galoisident}Gal(\ell/k)\longrightarrow
%Gal(\tgell/\tgk)\longrightarrow G(\tell_1/\tk_1)\end{numequation}
%(\cite[Prop. 2.11]{Ducros}). Let $I$ be the kernel of the
%composite map. By loc.cit., Prop. 2.14 the kernel of the first map
%equals the unique $p$-Sylow subgroup in $I$. By
%(\ref{lem-purelyinsep}) $I=Gal(\ell/k)$ is therefore a $p$-group.
%Hence $[\ell:k]$ is a $p$-power. However, in the following there
%is no reason to restrict to this more special case.

\vskip8pt

Let $r\in\Gamma$ and $B:=k\{r^{-1}T\}$. %Let $p^n$ be the degree of
%the purely inseparable extension $\tgk\subseteq\tgell$, i.e.
%$\tgell=\tgk^{p^{-n}}$.
Let $s\in\Gamma$ and
$\tilde{f}(T_1)=a_0T_1+a_1T^p+...+a_mT^{p^m}\in\tgk[r^{-p^n}T_1]$
be a homogeneous $p$-polynomial with $a_0\neq 0$ of degree
$s^{p^n}$. Let $\tgA$ be the form of $\tgB=\tgk[r^{-1}T]$
corresponding to these data via the theorem above. Let $f(T_1)$ be
a homogeneous lift of $\tilde{f}$ in $k\{r^{-p^n}T_1\}$ and
consider the affinoid $k$-algebra
$$ A:= k\{r^{-p^n}T_1,s^{-1}T_2\}/(T^{p^n}_2-f(T_1)).$$
The quotient norm on $A$ gives a filtration whose graded ring
equals $\tgA$ \cite[Cor. 1.1.9/6]{BGR}. Since $\tgA$ is reduced,
the quotient norm is power-multiplicative and, thus, equals the
spectral seminorm (loc.cit., Prop. 6.2.3/3). Hence, this seminorm
is a norm and $A$ is reduced. Also, $\tgA$ is the graded reduction
of the affinoid algebra $A$. Since $\tgA$ is a form of $\tgB$ the
graded reduction of $A_\ell=\ell\hat{\otimes}_k A$ equals
$\tgell\otimes_{\tgk}\tgA\simeq \tgell[r^{-1}T]$ according to Lem.
\ref{lem-keyreduced}. Let $\tilde{f}$ be the preimage of $T$ under
the latter isomorphism and $f$ a homogeneous lift in $A_\ell$.
Mapping $T$ to $f$ induces a homomorphism $\psi:
\ell\{r^{-1}T\}\rightarrow A_\ell$ of $\ell$-affinoid algebras
whose graded reduction is an isomorphism. If $\ell'$ is any
complete extension field of $k$ containing $\ell$ then
$\tilde{\ell'}_\bullet\otimes_{\tgk}\tgA$ is reduced since $\tgA$
is a form of $\tgB$. Lem. \ref{lem-gradediso} yields now that
$\psi$ is an isomorphism and so $A$ is a form of $B$ with respect
to $k\subseteq\ell$. We summarize:

\begin{prop}
%Let $k\subseteq\ell$ be a finite and wildly ramified extension
%with graded reduction $\tgk\subseteq\tgell$.
Any form of $\tgk[r^{-1}T]$ of Russell type with respect to
$\tgk\subseteq\tgell$ lifts to a wildly ramified form of
$k\{r^{-1}T\}$ with respect to $k\subseteq\ell$ and this map is
injective on isomorphism classes of forms.
\end{prop}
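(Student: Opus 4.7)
The plan is to combine the construction spelled out immediately before the statement with a functoriality argument exploiting the intrinsic character of the spectral norm on reduced affinoid algebras.

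For the lifting part, I would proceed essentially as in the paragraph preceding the statement. Given a Russell-type graded form $\tgA$ of $\tgB=\tgk[r^{-1}T]$, presented via Theorem \ref{thm-russellgraded} by a homogeneous $p$-polynomial $\tilde{f}\in\tgk[r^{-p^n}T_1]$ of degree $s^{p^n}$ with $a_0\neq 0$, I would lift $\tilde{f}$ to a homogeneous element $f\in k\{r^{-p^n}T_1\}$ and set $A:=k\{r^{-p^n}T_1,s^{-1}T_2\}/(T_2^{p^n}-f(T_1))$. The quotient norm on $A$ has associated graded ring $\tgA$, which by hypothesis is reduced; invoking \cite[Prop. 6.2.3/3]{BGR} one gets that the quotient norm is power-multiplicative and hence equals the spectral seminorm, so $A$ is reduced with graded reduction $\tgA$. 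Lemma \ref{lem-keyreduced} then gives $\widetilde{A_\ell}_\bullet\simeq\tgell\otimes_{\tgk}\tgA\simeq\tgell[r^{-1}T]$, and Lemma \ref{lem-gradediso} applied to the obvious lift $\psi:\ell\{r^{-1}T\}\to A_\ell$ of this graded isomorphism shows $\psi$ is an isomorphism. Since the hypothesis $(\tgk)^{p^{-n}}\subseteq\tgell$ makes $\tgk\subseteq\tgell$ purely inseparable, Lemma \ref{lem-purelyinsep} ensures that $k\subseteq\ell$ is wildly ramified, so $A$ is the sought wildly ramified form of $B$.

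For injectivity, suppose $A$ and $A'$ are two such lifts, corresponding to Russell-type graded forms $\tgA$ and $\tilde{A}'_\bullet$, and let $\phi:A\car A'$ be an isomorphism of $k$-algebras. The plan is to show $\phi$ is automatically an isometry for the spectral norms, from which an induced graded $\tgk$-algebra isomorphism $\tgA\car\tilde{A}'_\bullet$ follows. Both $A$ and $A'$ are reduced $k$-affinoid algebras, so the spectral norm on each is the unique power-multiplicative norm inducing the canonical Banach topology (\cite[Prop. 6.2.1/4, Thm. 6.2.4/1]{BGR}). Any $k$-algebra homomorphism between affinoid algebras is continuous, and because the topology on each side is intrinsic to the ring structure, $\phi$ and $\phi^{-1}$ must carry spectral norm to spectral norm. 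Therefore $\phi$ sends $A_{\leq\gamma}$ to $A'_{\leq\gamma}$ and $A_{<\gamma}$ to $A'_{<\gamma}$, and passage to associated graded rings yields a degree-$1$ graded $\tgk$-algebra isomorphism $\tgA\car\tilde{A}'_\bullet$. This proves that distinct isomorphism classes on the source produce distinct isomorphism classes on the target.

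The main obstacle I expect is the gradedness bookkeeping in the last step: one has to verify that the induced map on graded reductions is $\Gamma$-graded of degree exactly $1$ over $\tgk$, and not merely an ungraded ring isomorphism. This comes down to two observations that must be made precise. First, $\phi$ preserves not only the weak filtration $A_{\leq\gamma}$ but also the strict filtration $A_{<\gamma}$, which is precisely what the isometry property encodes. Second, $\phi$ is $\tgk$-linear on graded pieces because it was $k$-linear to begin with and passing to associated graded rings for $k$ gives $\tgk$. Once these two points are clean, the induced map lies in the correct category of degree-$1$ graded $\tgk$-algebra homomorphisms and the argument concludes.
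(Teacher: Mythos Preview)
Your proposal is correct and follows the paper's approach: the lifting argument is precisely the construction given in the paragraph preceding the proposition (the paper simply writes ``We summarize:'' before stating it), and your injectivity argument via the intrinsic nature of the spectral seminorm on reduced affinoid algebras is the natural way to make explicit what the paper leaves implicit.

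One minor inaccuracy to flag: the hypothesis $(\tgk)^{p^{-n}}\subseteq\tgell$ does \emph{not} force $\tgk\subseteq\tgell$ to be purely inseparable (nothing prevents $\ell$ from being, say, algebraically closed), so Lemma~\ref{lem-purelyinsep} does not apply to $k\subseteq\ell$ as you claim. The adjective ``wildly ramified'' in the statement should instead be read as saying that the form is already trivialized over a wildly ramified subextension of $\ell/k$, namely one whose graded reduction contains $(\tgk)^{p^{-n}}$; this follows from the remark after Theorem~\ref{thm-russellgraded} that Russell-type graded forms trivialize over a finite (purely inseparable) subextension. This does not affect the substance of your argument.
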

Let $r=1$. Consider the pointed set of orbits $U_n/G_n$ from the
preceding subsection. Composing the inclusion of the preceding
proposition with the one from Cor. \ref{cor-orbits} yields the
following corollary.
\begin{cor}
In case $r=1$, there is a canonical inclusion of pointed sets
$$U_n/G_n\hookrightarrow H^1(\ell/k, {\bf \Aut}\hskip2pt B).$$
\end{cor}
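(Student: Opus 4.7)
The plan is to obtain the claimed inclusion as the composition of two injections that are already in hand. First, I apply Corollary \ref{cor-orbits} to the graded field extension $\tgk \subseteq (\tgk)^{p^{-n}}$; since $r = 1$ and the hypothesis of the present setup gives $(\tgk)^{p^{-n}} \subseteq \tgell$, this yields a canonical injection of pointed sets
$$U_n/G_n \;\hookrightarrow\; H^1\bigl((\tgk)^{p^{-n}}/\tgk,\, {\bf \Aut}^{\rm gr}\,\tgB\bigr),$$
under which the orbit of $\bar{\tau}$ is sent to the isomorphism class of the graded Hopf algebra $A(\tau) = \tgk[T_1,T_2]/(T_2^{p^n}-f(T_1))$ associated with a $p$-polynomial $f$ representing $\tau$. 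By construction every element in the image is represented by a graded form of Russell type.

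Next, I feed these graded Russell-type forms into the preceding proposition. That proposition provides an injective lifting $\tgA \mapsto A$ from isomorphism classes of graded Russell-type forms of $\tgB$ into $H^1(\ell/k,\,{\bf \Aut}\,B)$: concretely, one selects a homogeneous lift $f \in k\{T_1\}$ of the defining $p$-polynomial of $\tgA$ and sets
$$A \;=\; k\{T_1,T_2\}/(T_2^{p^n}-f(T_1));$$
the proposition shows $A$ is reduced, has graded reduction $\tgA$, and becomes isomorphic to $B$ over $\ell$. Every Russell-type form produced in the previous step qualifies, since a form with respect to $(\tgk)^{p^{-n}}/\tgk$ is automatically a form with respect to the larger extension $\tgell/\tgk$.

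Finally, I compose the two maps. Composability is immediate because the image of the first injection lies entirely in the Russell-type locus, which is the domain of the second; injectivity is preserved by composition. This yields the canonical inclusion of pointed sets
$$U_n/G_n \;\hookrightarrow\; H^1(\ell/k,\,{\bf \Aut}\,B)$$
asserted by the corollary. No substantive obstacle arises here: the entire analytic and descent-theoretic content has been absorbed into Theorem \ref{thm-russellgraded}, Corollary \ref{cor-orbits}, and the preceding proposition. The only small bookkeeping is to note that $U_n$ and $G_n = U_n \times \tk_1^\times$ are built from $\tk_1[F]$, and this is the same datum whether one views $\tk_1$ as the residue field of $k$ or as the degree-$1$ part of the graded residue field $\tgk$, so the two formulations of $U_n/G_n$ agree.
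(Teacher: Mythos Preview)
Your proposal is correct and follows exactly the paper's own approach: the paper's proof is the single sentence ``Composing the inclusion of the preceding proposition with the one from Cor.~\ref{cor-orbits} yields the following corollary,'' and you have simply unpacked this composition with additional care (noting that Russell-type forms over $(\tgk)^{p^{-n}}$ are a fortiori forms over $\tgell$, and that $U_n,G_n$ are unambiguously built from $\tk_1$). There is nothing to add or correct.
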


\section{Picard groups and $p$-radical descent}\label{descent}

All rings are commutative and unital. All ring homomorphisms are
unital.
\subsection{Krull domains}\label{krull}

We recall some divisor theory of Krull domains \cite[VII.1]{B-CA} thereby fixing some notation. Let $A$ be an integral domain
with quotient field $K$. The ring $A$ is called a {\it Krull
domain} if there exists a family of discrete valuations
$(v_i)_{i\in I}$ on $K$ such that $A$ equals the intersection of
the valuation rings of the $v_i$ and such that for $x\neq 0$
almost all $v_i(x)$ vanish. %For the general theory of valuations
%we refer to \cite{B-CA}, VI.3.
A noetherian Krull domain of dimension $1$ is the same as a
Dedekind domain (loc.cit., Cor. VII.1.3).

\vskip8pt

Let $A$ be a Krull domain and $P(A)$ the set of height $1$ prime
ideals of $A$. The free abelian group $D(A)$ on $P(A)$ is called
the {\it divisor group} of $A$. Given $P\in P(A)$ the localization
$A_P$ of $A$ is a discrete valuation ring. Let $v_P$ be the
associated valuation. The {\it divisor map}
\[\dv_A: K^\times\longrightarrow D(A)~,~~~ x\mapsto\sum_{P\in P(A)}
v_P(x)P\] is then a well-defined group homomorphism giving rise to
the {\it divisor class group} $$\Cl(A):=D(A)/{\rm Im~} \dv_A.$$
%There is a commutative diagramm of abelian groups with exact rows
%\begin{equation}\label{cartier}\xymatrix{
%0\ar[r]& A^\times \ar[r]^{\subseteq} \ar[d]^= &  K^\times
%\ar[r]^>>>>>{div'_A} \ar[d]^= &
%Cart(A) \ar[r]\ar[d]^{\iota} &Pic(A)\ar[d]^{\bar{\iota}}\ar[r] &0\\
%0\ar[r]&  A^\times \ar[r]^{\subseteq} &  K^\times
%\ar[r]^>>>>>>>{div_A} & D(A) \ar[r]& Cl(A) \ar[r] & 0
%}\end{equation} where $Cart(A)$ refers to the group of invertible
%fractional ideals of $A$ and $div'_A: K^\times\rightarrow Cart(A)$
%refers to the map $x\mapsto xA$. The map $\iota:
%Cart(A)\rightarrow D(A)$ is given by $I\mapsto\sum_{P\in P(A)}
%v_P(I)P$ where $v_P(I)$ equals the order of the extended
%fractional ideal in the discrete valuation field $Quot(A_P)$. The
%maps $\iota$ and $\bar{\iota}$ on $Cart(A)$ and $Pic(A)$ are
%always injective and, furthermore, bijective if $A$ is noetherian
%regular (cf. \cite{WeibelK}, Cor. I.3.8.1).

\vskip8pt

If $A'\subseteq A$ is a subring which is a Krull domain itself and
if $P'\in P(A'),~ P\in P(A)$ are height $1$ prime ideals with
$P\cap A'=P'$ let $e(P/P')\in\bbN$ denote the ramification index
of $P$ over $P'$. Then $j(P'):=\sum e(P/P')P$ induces a
well-defined group homomorphism \[j: D(A')\rightarrow D(A)\] where
the sum runs through all $P\in P(A)$ with $P\cap A'=P'$. If
$A'\subseteq A$ is an {\it integral} ring extension, then $j$
factors through a group homomorphism
\[\overline{\jmath}: \Cl(A')\rightarrow \Cl(A)\] \cite[Prop. VII.\S1.14]{B-CA}.

\subsection{$p$-radical descent}\label{radical}

Let $A$ be for a moment an arbitrary commutative ring and $m\geq
0$ a natural number. Recall that a {\it higher derivation}
$\partial$ of {\it rank} $m$ on $A$ is an ordered tuple
$$\partial=(\partial_0,...,\partial_m)$$ of additive endomorphisms $\De_k$ of
$A$ satisfying the convolution formula
\begin{equation}\label{conv}
\partial_k(ab)=\sum_{j=0,...,k}\partial_j(a)\partial_{k-j}(b)
\end{equation}
for all $a,b\in A$ and such that $\De_0={\rm id}_A$
\cite[IV.9]{Jacobson}. Let
\[A[T]_m:=A[T]/(T^{m+1})\] be the $m$-truncated polynomial ring over
$A$ with its augmentation map $$\epsilon: A[T]_m\longrightarrow
A,\hskip5pt f\mapsto f {\rm~ mod~} (T).$$The higher derivation
$\partial$ induces an injective ring homomorphism
\[A\rightarrow A[T]_m,~a\mapsto\sum_{j=0,...,m}\partial_j(a)T^j\]
which we again denote by $\partial$. Obviously,
$\epsilon\circ\partial ={\rm id}_A$. Conversely, any ring
homomorphism $\partial: A\rightarrow A[T]_m$ with the property
$\epsilon\circ\partial ={\rm id}_A$ induces in an obvious way a
higher derivation of rank $m$ on $A$. An element $a\in A$ such
that $\partial(a)=a$ is called a {\it constant}. The constants
form a subring of $A$. We say that $\partial$ is {\it nontrivial}
if there exists an element in $A$ which is not a constant. In this
case, we define the {\it order} $\mu(\partial)$ of $\partial$ as
$$ \mu(\partial):=\min \{ 1\leq j \leq m \hskip3pt |\hskip3pt \De_j\neq 0\}.$$

\vskip8pt

%In case $\mu(\partial)\geq 1$ we call $\partial$ {\it nontrivial}.
We assume in the following that the ring $A$ has characteristic
$p>0$. In this case we introduce for a nontrivial $\partial$ the
{\it exponent} $n(\partial)$ as
$$n(\partial):=\min \{n\hskip3pt | \hskip3pt m<\mu(\partial)\cdot p^{n}\}.$$
Note that $n(\partial)\geq 1$.
\begin{lemma}\label{lem-constants}
Let $\partial$ be nontrivial. Any power $a^{p^{n(\partial)}}$ with
$a\in A$ is a constant.
\end{lemma}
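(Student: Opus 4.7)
The plan is to exploit the fact that $\partial$, viewed as a ring homomorphism $A\to A[T]_m$, commutes with the Frobenius endomorphism because $A$ (and hence $A[T]_m$) has characteristic $p$. Concretely, for any $a\in A$ and any $n\geq 0$ one has
\[ \partial(a^{p^n}) \;=\; \partial(a)^{p^n} \;=\; \Bigl(\sum_{j=0}^m \partial_j(a)\,T^j\Bigr)^{p^n} \;=\; \sum_{j=0}^m \partial_j(a)^{p^n}\,T^{jp^n} \]
in $A[T]_m$, where the last equality uses the freshman's dream iterated $n$ times.

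Next I would use the definition of the order $\mu=\mu(\partial)$ to kill the low-degree terms: since $\partial_j=0$ for $1\leq j<\mu$, the displayed sum reduces to $a^{p^n}+\sum_{j\geq \mu}\partial_j(a)^{p^n}T^{jp^n}$. For the exponent $n=n(\partial)$ the defining inequality $m<\mu\,p^{n}$ then guarantees that every remaining monomial $T^{jp^n}$ with $j\geq\mu$ has $jp^n\geq \mu p^n>m$, hence vanishes in $A[T]_m=A[T]/(T^{m+1})$.

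Therefore $\partial(a^{p^{n(\partial)}})=a^{p^{n(\partial)}}$, i.e., $\partial_j(a^{p^{n(\partial)}})=0$ for all $j\geq 1$, so $a^{p^{n(\partial)}}$ is a constant.

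There is no real obstacle; the only subtlety is making sure the exponent is used in the correct direction (one wants the smallest $n$ such that $m<\mu p^n$ so that $\mu p^n$ overshoots $m$ but smaller exponents need not), which is exactly how $n(\partial)$ is defined.
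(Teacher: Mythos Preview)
Your proof is correct and follows essentially the same route as the paper: both use that $\partial$ is a ring homomorphism into a ring of characteristic $p$ to get $\partial(a^{p^{n(\partial)}})=\partial(a)^{p^{n(\partial)}}=\sum_j \partial_j(a)^{p^{n(\partial)}}T^{jp^{n(\partial)}}$, and then observe that every term with $j\geq 1$ and $\partial_j(a)\neq 0$ satisfies $j\geq\mu(\partial)$, whence $jp^{n(\partial)}>m$ and the monomial vanishes in $A[T]_m$. The only cosmetic difference is that the paper phrases the case split as ``if $\partial_j(a)\neq 0$ then $\mu\leq j$'' rather than first discarding the $j<\mu$ terms.
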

\begin{proof}
Suppose $\De_j(a)\neq 0$ for some $j\geq 1$. Then
$\mu(\partial)\leq j$ and hence $m<j\cdot p^{n(\partial)}$. It
follows that $(\partial_j(a)T^j)^{p^{n(\partial)}}=0$ in $A[T]_m$.
Since $\partial$ is a homomorphism and $A[T]_m$ has characteristic
$p$ we obtain
$$\partial(a^{p^{n(\partial)}})=\partial(a)^{p^{n(\partial)}}=\sum_{j=0,...,m}(\partial_j(a)T^j)^{p^{n(\partial)}}=a^{p^{n(\partial)}}.$$
\end{proof}

 \vskip8pt

If $C\subseteq A$ is a subring with $\De_j(C)\subseteq C$ for all
$j=0,..,m$ the ring $C$ is called {\it invariant}. We obtain in
this case a higher derivation on $C$ of the same rank.

\vskip8pt

In \cite{Baba} K. Baba uses the notion of higher derivation to
build up a $p$-radical descent theory of higher
exponent. We recall that the case of exponent one using ordinary
derivations is classical and due to P. Samuel \cite{Samuel2}.
We shall
only need a special case of Baba's theory as follows. %theory of
%$p$-radical descent of exponent one the reader may consult the
%exposition by M.-A. Knus and M. Ojanguren (\cite{KnusOjanguren},
%II.\S 6).
Consider a Krull domain $A$ of characteristic $p>0$ with quotient
field $K$ and a nontrivial higher derivation $\De$ of rank $m$ on
$K$ such that the subring $A\subseteq K$ is invariant. Let
$K'\subseteq K$ be the field of constants and let $A':=A\cap K'$.
Then $K'$ is the quotient field of $A'$. If $\{v_i\}_{i\in I}$ is
a family of discrete valuations on $K$ exhibiting $A$ as a Krull
domain, their restrictions to $K'$ prove $A'$ to be a Krull
domain.

By the previous lemma $A'\subseteq A$ is an integral ring
extension. We therefore have the canonical homomorphisms $j:
D(A')\rightarrow D(A)$ and $\overline{\jmath}: \Cl(A')\rightarrow
\Cl(A)$. On the other hand, we have inside the group of units
$(K[T]_m)^\times$ the subgroup of so-called {\it logarithmic
derivatives}
$$\cL:=\{\frac{\De(z)}{z} \hskip2pt | \hskip2pt z\in K^\times\}$$ as well
as its subgroups
\begin{equation}\label{integrallog}\begin{array}{ll}
  \cL_A:=& \cL\hskip2pt \cap\hskip2pt (A[T]_m)^\times,\ \\
  &  \\
  \cL'_A:=& \{\frac{\De(u)}{u}\hskip2pt | \hskip2pt u\in A^\times\} \\
\end{array}
\end{equation} with $\cL'_A\subseteq\cL_A$.
\begin{lemma}\label{lem-log}
The abelian group $\cL$ has exponent $p^{n(\partial)}$ and one has
$$\cL=\{ \frac{\De(z)}{z}\hskip2pt | \hskip2pt z\in A-\{0\}\}.$$
\end{lemma}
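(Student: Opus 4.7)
The plan is to derive both assertions by direct computation, the only inputs being that $\partial : K \to K[T]_m$ is a ring homomorphism and Lemma \ref{lem-constants} (which applies verbatim to the higher derivation on $K$).

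Set $n:=n(\partial)$. For the exponent bound, let $z \in K^\times$. Since $\partial$ is a ring homomorphism, $\partial(z)^{p^n}=\partial(z^{p^n})$, and Lemma \ref{lem-constants} gives $\partial(z^{p^n})=z^{p^n}$. Dividing by $z^{p^n}$ yields $(\partial(z)/z)^{p^n}=1$, so the exponent of $\cL$ divides $p^n$. To pin down the exact exponent, I would use the minimality in the definition $n(\partial)=\min\{n : m < \mu p^n\}$, where $\mu := \mu(\partial)$: for $n-1$ the inequality fails, i.e.\ $\mu\cdot p^{n-1}\leq m$. Choose $a\in K$ with $\partial_\mu(a)\neq 0$, which exists by definition of $\mu$. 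In characteristic $p$ one has the identity $(\sum_j c_j T^j)^{p^{n-1}}=\sum_j c_j^{p^{n-1}} T^{j\,p^{n-1}}$, so $\partial(a)^{p^{n-1}}\in K[T]_m$ has a nonzero coefficient $\partial_\mu(a)^{p^{n-1}}$ at degree $\mu p^{n-1}\leq m$. Hence $\partial(a^{p^{n-1}})\neq a^{p^{n-1}}$, showing $(\partial(a)/a)^{p^{n-1}}\neq 1$. Consequently $\partial(a)/a$ has order exactly $p^n$.

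For the second assertion, given $z\in K^\times$ write $z=a/b$ with $a,b\in A\setminus\{0\}$. By the first part, $(\partial(b)/b)^{p^n}=1$, so $(\partial(b)/b)^{-1}=(\partial(b)/b)^{p^n-1}$, and therefore
$$
\frac{\partial(z)}{z} \;=\; \frac{\partial(a)}{a}\cdot\Bigl(\frac{\partial(b)}{b}\Bigr)^{-1} \;=\; \frac{\partial(a)\,\partial(b)^{p^n-1}}{a\,b^{p^n-1}} \;=\; \frac{\partial(a\,b^{p^n-1})}{a\,b^{p^n-1}},
$$
with $a\,b^{p^n-1}\in A\setminus\{0\}$, as required.

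The arguments are essentially mechanical; the only point requiring some care is pinning down the exact exponent (rather than merely an upper bound), where the minimality clause in the combinatorial definition of $n(\partial)$ has to be combined with the characteristic~$p$ distributivity of the $p^{n-1}$-th power over sums inside $K[T]_m$.
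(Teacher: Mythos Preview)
Your proof is correct and follows the same route as the paper for both assertions: the exponent bound via $\partial(z)^{p^n}=\partial(z^{p^n})=z^{p^n}$ from Lemma~\ref{lem-constants}, and the reduction to $A\setminus\{0\}$ by replacing $z=a/b$ with $ab^{p^n-1}$ using $(\partial(b)/b)^{-1}=(\partial(b)/b)^{p^n-1}$.

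The one difference is that the paper stops after showing every element of $\cL$ is killed by $p^{n(\partial)}$ and declares the first claim proved, whereas you additionally argue that the exponent is \emph{exactly} $p^{n(\partial)}$ by exhibiting an element of that order. Your extra argument is sound: minimality of $n(\partial)$ gives $\mu p^{n-1}\le m$, and the Frobenius expansion of $\partial(a)^{p^{n-1}}$ in $K[T]_m$ then retains the nonzero term $\partial_\mu(a)^{p^{n-1}}T^{\mu p^{n-1}}$, forcing $(\partial(a)/a)^{p^{n-1}}\neq 1$. So you prove slightly more than the paper records; the paper evidently reads ``has exponent $p^{n(\partial)}$'' as an upper bound (which is all that is used downstream, e.g.\ for $\Pic(A)$).
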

\begin{proof}
If $z\in K^\times$ then
$\partial(z)^{p^{n(\partial)}}=\partial(z^{p^{n(\partial)}})=z^{p^{n(\partial)}}$
by the preceding lemma. This proves the first claim. Now consider
$\frac{\De(z)}{z}$ with $z\in K^\times.$ Write $z=f/g$ with
nonzero $f,g\in A$ and let $m:=p^{n(\partial)}-1$. As we have just
seen $(\partial(g)/g)^m=(\partial(g)/g)^{-1}$ in $(K[T]_m)^\times$
and therefore
$$\partial(fg^m)/(fg^m)=(\partial(f)/f)\cdot
(\partial(g)/g)^m=\partial(f/g)/(f/g).$$ Since $fg^m\in A-\{0\}$
this yields the assertion.
\end{proof}

\vskip8pt

The fundamental construction in \cite{Baba}, (1.5) produces an
injective homomorphism
\[\Phi_A: \ker\overline{\jmath}\longrightarrow \cL_A/\cL'_A,\hskip10pt D{\rm~
mod~}~\dv_{A'}(K'^\times)\mapsto \frac{\De(x_D)}{x_D} {\rm
~mod~}\cL'_A\] where $x_D\in K^\times$ is chosen such that
$$\dv_{A}(x_D)=j(D).$$ We consider the following
condition on the tuple $(A,\partial)$:
\[\begin{array}{lr} (\heartsuit)
 & ~~~~~[K:K']=p^{n(\De)}~~{\rm and}~~\partial_{\mu(\partial)}(a)\in A^\times~{\rm for~some~} a\in A. \\
   \\
\end{array}\]

\begin{prop}\label{prop-Baba}
If $(\heartsuit)$ holds, then $\Phi_A$ is an isomorphism.
\end{prop}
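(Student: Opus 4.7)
The plan is to prove surjectivity of $\Phi_A$, since its injectivity is already provided by Baba's construction. Fix an element $\bar u\in \cL_A/\cL'_A$ and, using Lemma \ref{lem-log}, choose a representative of the form $u=\partial(z)/z$ with $z\in A\setminus\{0\}$. The task is to exhibit a divisor $D\in D(A')$ whose image in $D(A)$ is $\dv_A(z)$; then the class $\bar D\in \Cl(A')$ lies in $\ker\overline{\jmath}$, one may take $x_D=z$ in the definition of $\Phi_A$, and by construction $\Phi_A(\bar D)=\bar u$.

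To analyze when $\dv_A(z)$ lies in the image of $j$, I would first observe that condition $(\heartsuit)$ forces $K/K'$ to be purely inseparable of exponent exactly $n(\partial)$: Lemma \ref{lem-constants} gives $K^{p^{n(\partial)}}\subseteq K'$, and the degree hypothesis $[K:K']=p^{n(\partial)}$ pins the exponent down. Since $A'\subseteq A$ is integral (again by Lemma \ref{lem-constants}), each $P\in P(A)$ contracts to a unique $P':=P\cap A'\in P(A')$, and by pure inseparability $P$ is the unique height one prime of $A$ lying over $P'$, with trivial residue extension and ramification index $e(P/P')$ a $p$-power dividing $p^{n(\partial)}$. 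In particular, the image of $j$ consists of exactly those divisors $\sum_P n_P P\in D(A)$ for which $e(P/P')$ divides $n_P$ for every $P$.

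The heart of the proof is therefore to establish, for each $P\in P(A)$, that $e(P/P')\mid v_P(z)$. This is a purely local statement: after localising, $B:=A_P$ is a DVR, $B':=A'_{P'}$ is its ring of constants, $B$ is $B'$-free of rank $e=e(P/P')$, the residue extension is trivial, and the higher derivation $\partial$ restricts to $B$. The hypothesis $u\in A[T]_m$ becomes $\partial_j(z)/z\in B$ for all $j$. The second part of $(\heartsuit)$ then enters decisively: an element $a\in A$ with $\partial_{\mu(\partial)}(a)\in A^\times$ descends to a local element whose leading non-trivial higher derivative is a unit of $B$, so $a$ acts as a ``test element'' that forces the higher derivation to be maximally nondegenerate at $P$. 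Using the convolution identity (\ref{conv}) one analyses $\partial(z)/z$ relative to $a$ and, comparing the $B$-structure of $B'\subseteq B$ with the filtration cut out by the $\partial_j$'s, one extracts the congruence $v_P(z)\equiv 0\pmod e$. This local analysis—essentially showing that under $(\heartsuit)$ the module of logarithmic derivatives sees exactly the ramification of $A/A'$—is the main obstacle and the only substantial step; the rest of the argument is formal.

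Granted the divisibility, set $n_{P'}:=v_P(z)/e(P/P')$ (well-defined because the $P$ over $P'$ is unique), and let $D:=\sum_{P'}n_{P'}P'\in D(A')$. Then $j(D)=\sum_P e(P/P')n_{P'}\,P=\sum_P v_P(z)P=\dv_A(z)$, so the class $\bar D\in\Cl(A')$ lies in $\ker\overline{\jmath}$. The choice $x_D=z$ is legal in Baba's construction, and substituting in the definition of $\Phi_A$ yields $\Phi_A(\bar D)=\partial(z)/z\bmod\cL'_A=\bar u$, which finishes the proof.
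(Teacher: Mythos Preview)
The paper does not actually prove this proposition: its proof consists entirely of the sentence ``This is a special case of \cite[Thm.~1.6]{Baba}. Note that we have $r=1$ in the notation of loc.cit.'' So you are attempting to reconstruct an argument that the paper treats as a black-box citation to Baba's descent theorem.

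Your structural outline is reasonable and is indeed the shape of Baba's (and, in exponent one, Samuel's) argument: reduce surjectivity to showing that $\dv_A(z)\in j(D(A'))$, which amounts to the local divisibility $e(P/P')\mid v_P(z)$ at every height one prime. However, you explicitly identify this local divisibility as ``the main obstacle and the only substantial step'' and then do not prove it; the phrases ``one analyses $\partial(z)/z$ relative to $a$'' and ``one extracts the congruence'' are not an argument. Since this step is the entire content of the surjectivity claim, what you have written is an outline rather than a proof.

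Two of your auxiliary assertions also need care. You claim that in the local picture the residue extension $\kappa(P)/\kappa(P')$ is trivial and that $B=A_P$ is $B'$-free of rank $e=e(P/P')$. Pure inseparability of $K/K'$ gives uniqueness of $P$ over $P'$, but it does not force $f=1$, nor does it guarantee freeness of the stated rank without further hypotheses (defectlessness, etc.). These claims are not needed for the strategy, but as stated they are unjustified. If you want to turn this into a self-contained proof you will have to carry out the local computation in full, presumably along the lines of \cite{Baba}; otherwise, citing that reference as the paper does is the honest move.
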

\begin{proof}
This is a special case of loc.cit., Thm. 1.6. Note that we have
$r=1$ in the notation of loc.cit.
\end{proof}
Thus, if $(\heartsuit)$ holds and the class group of $A$ vanishes,
then the class group of the ring of constants $A'$ has exponent
$p^{n(\partial)}$ and admits a fairly explicit description in
terms of logarithmic derivatives in the ring $A[T]_m$. This will
be our main application.

\vskip8pt

We shall need some information how the property $(\heartsuit)$
behaves when the ring of constants varies. To this end we consider
an injective homomorphism $A'\rightarrow B$ into an integrally
closed domain $B$ such that $A_B:=B\otimes_{A'} A$ is an integral
domain. Let $L'=\Frac(B)$ be the fraction field of $B$. Since $A_B$
is integral over $B$, we then have
$$L:=\Frac(A_B)=L'\otimes_{K'} K$$ for the fraction field of $A_B$
\cite[Lem. \S9.1]{Matsumura}. Since $B$ is integrally closed,
one has $L'\cap A_B=B$.
%indeed, A'-->A is integral and so B--> A_B is integral. Hence L'=S^{-1}B-->S^{-1}B \otimes_{A'} A is integral. Now the source is
% a field and the target is an integral domain. Hence the target is a field (Matsumura,loc.cit.).
\begin{lemma}\label{lem-base}
Suppose that $A$ is flat as a module over $A'$ and that the
$A'$-submodule ${\rm Im}\hskip2pt (\partial_0-\partial) \subseteq
A[T]_m$ associated with the homomorphism $\partial: A\rightarrow
A[T]_m$ is flat. The field $L$ admits a higher derivation
$$\De_L:={\rm id}_{L'}\otimes_{K'}\partial$$ of the same rank and
order as $\De$ such that the subring $A_B$ is invariant. The field
of constants with respect to $\partial_L$ equals $L'$.
\end{lemma}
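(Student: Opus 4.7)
My plan is to construct $\partial_L$ by flat base change along the field extension $K' \hookrightarrow L'$ and then verify each assertion. The key preliminary observation is that $\partial\colon K \to K[T]_m$ is $K'$-linear: every $c \in K'$ is a constant, so $\partial(c) = c$, meaning $\partial|_{K'}$ is the inclusion of $K'$ as constant polynomials. Hence $\partial_L := \mathrm{id}_{L'} \otimes_{K'} \partial$ is a well-defined ring homomorphism from $L = L' \otimes_{K'} K$ to $L' \otimes_{K'} K[T]_m$, and via the canonical identification $L' \otimes_{K'} K[T]_m \cong L[T]_m$ (valid since $K[T]_m$ is free over $K$ of rank $m+1$) this becomes a map $L \to L[T]_m$ with components $(\partial_L)_j = \mathrm{id}_{L'} \otimes \partial_j$. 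In particular $(\partial_L)_0 = \mathrm{id}_L$, so $\partial_L$ is a higher derivation of rank $m$. The order is preserved because $L'/K'$ is faithfully flat: the inclusion $K \hookrightarrow L$ is injective, so $(\partial_L)_j$ vanishes on $L$ if and only if $\partial_j$ vanishes on $K$, giving $\mu(\partial_L) = \mu(\partial)$.

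For the invariance of $A_B$ inside $L$, one computes $\partial_L(b \otimes a) = b \otimes \partial(a)$ for $b \otimes a \in A_B = B \otimes_{A'} A$; since $\partial(a) \in A[T]_m$ by invariance of $A$ under $\partial$, the result lies in $B \otimes_{A'} A[T]_m = A_B[T]_m \subseteq L[T]_m$. To identify the field of constants with $L'$, the containment $L' \subseteq \ker(\partial_L - \mathrm{id}_L)$ is immediate from the $L'$-linearity of $\partial_L$. For the reverse inclusion, I would invoke the exact sequence of $K'$-modules
\[
0 \longrightarrow K' \longrightarrow K \xrightarrow{\,\partial - \partial_0\,} K[T]_m,
\]
whose exactness is just the definition of $K'$ as the ring of constants. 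Tensoring with $L'$ over $K'$ preserves exactness (the field extension $L'/K'$ is flat), yielding
\[
0 \longrightarrow L' \longrightarrow L \xrightarrow{\,\partial_L - \mathrm{id}_L\,} L[T]_m,
\]
which identifies $L'$ precisely as $\ker(\partial_L - \mathrm{id}_L)$, i.e.~the field of constants of $\partial_L$.

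The main delicate point is isolating the correct exact sequence at the $K'$-level and recognizing that its base change along $L'/K'$ really is the kernel-image sequence for $\partial_L - \mathrm{id}_L$; this is handled cleanly by the automatic flatness of the field extension. The flatness hypotheses on $A$ and on $\mathrm{Im}(\partial_0 - \partial)$ as $A'$-modules do not enter the four $L$-level claims directly, but are what is required to carry out the parallel descent at the $A', B$ level: applied to the exact sequence $0 \to A' \to A \to \mathrm{Im}(\partial_0 - \partial) \to 0$ tensored with $B$ over $A'$, they yield the stronger companion statement that the ring of constants of the induced higher derivation $\partial_{A_B} := \mathrm{id}_B \otimes_{A'} \partial$ on $A_B$ is exactly $B$, from which the field-of-constants assertion in $L$ also follows upon passing to fraction fields.
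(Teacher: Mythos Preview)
Your proof is correct and follows the same underlying idea as the paper: base change $\partial$ along $K'\hookrightarrow L'$ and identify the constants via exactness of a short sequence. The organizational difference is that the paper first establishes the integral-level statement---using the flatness of $A$ and of ${\rm Im}(\partial_0-\partial)$ to tensor $0\to A'\to A\to {\rm Im}(\partial_0-\partial)\to 0$ with $B$ and conclude that $B$ is the ring of constants of $\partial_B$ on $A_B$---and only then localizes at $B\setminus\{0\}$ to obtain $\partial_L$ and its field of constants $L'$. You instead work directly at the field level, where flatness of $L'/K'$ is automatic, and relegate the integral-level statement to a side remark. Your observation that the two flatness hypotheses are not actually needed for the $L$-level assertions (rank, order, invariance of $A_B$, field of constants $=L'$) is correct and is a small sharpening of the argument; the paper's route has the advantage of simultaneously proving the stronger companion fact that the constants of $\partial_B$ on $A_B$ are exactly $B$, which you mention at the end.
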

\begin{proof}
The free $A$-module $A[T]_m$ is flat as an $A'$-module whence ${\rm
Tor}_1^{A'}(B,A[T]_m)=0$. So tensoring the short exact sequence
$$ 0\longrightarrow (T^{m+1})\longrightarrow A[T]\longrightarrow
A[T]_m\longrightarrow 0$$ over $A'$ with $B$ shows that
$B\otimes_{A'} A[T]_m\simeq (A_B)[T]_m$. We therefore obtain a
higher derivation $A_B\rightarrow (A_B)[T]_m$ by $\partial_B:={\rm
id}_B \otimes_{A'} \partial$ of rank $m$. According to formula
(\ref{conv}) all $\partial_k$ are $A'$-linear whence all
$(\partial_B)_k={\rm id}_B\otimes_{A'}\partial_k$ are $B$-linear.

Let $N:={\rm Im}\hskip2pt (\partial_0-\partial)$. By assumption we have
${\rm Tor}_1^{A'}(B,N)=0$. So tensoring the exact sequence of
$A'$-modules
$$ 0\longrightarrow A'\stackrel{\subseteq}{\longrightarrow}
A\stackrel{\partial_0-\partial}{\longrightarrow} N\longrightarrow
0$$ with $B$ we obtain that $B$ equals the ring of constants for
$\partial_B$ on $A_B$. Let $S:=B-\{0\}$. Then
$L=S^{-1}A_B=K\otimes_{K'} L'$. The ring homomorphim $\partial_B$
is $B$-linear whence $\partial_L:=S^{-1}\partial_B$ is an
$L'$-linear ring homomorphism $L\rightarrow L[T]_m$ and a higher
derivation of $L$ of rank $m$. We have $(\partial_L)_k
=S^{-1}(\partial_B)_k= L'\otimes_L
\partial_k$ for all $k=0,...,m$ whence
$\mu(\partial_L)=\mu(\partial)$.  Moreover, $x/s\in L$ with $x\in
A_B, s\in S$ is constant with respect to $\partial_L$ if and only
if $x$ is constant with respect to $\partial_B$, i.e. if and only
if $x\in B$. This shows that $L'$ equals the field of constants
with respect to $\partial_L$.

\end{proof}
\begin{cor}\label{cor-baseexp}
Keep the assumptions of the preceding lemma. Suppose that the ring
$A_B$ is again a Krull domain. If $(A,\De)$ satisfies
$(\heartsuit)$ then $(A_B,\De_L)$ satisfies $(\heartsuit)$. In
this case, $B$ is a Krull domain.
\end{cor}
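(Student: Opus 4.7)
The plan is to transport each condition in $(\heartsuit)$ from $(A,\partial)$ to $(A_B,\partial_L)$ using the preceding lemma, and then to obtain $B$ as a Krull domain by restricting discrete valuations from $L$ to $L'$, exactly as $A'$ is obtained from $A$ in subsection \ref{radical}.

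First, the preceding lemma gives $\mu(\partial_L) = \mu(\partial)$, and since $\partial_L$ has the same rank $m$ as $\partial$, the definition of the exponent yields $n(\partial_L) = n(\partial)$. For the degree condition I would invoke the identification $L = L' \otimes_{K'} K$ recorded just before the preceding lemma: because $K/K'$ is finite of degree $[K:K'] = p^{n(\partial)}$ by hypothesis, the $L'$-algebra $L' \otimes_{K'} K$ is free of rank $p^{n(\partial)}$, and since it is already known to coincide with the field $L$, we conclude $[L:L'] = p^{n(\partial)} = p^{n(\partial_L)}$.

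For the unit condition I would pick $a \in A$ with $\partial_{\mu(\partial)}(a) \in A^\times$ and consider $a' := 1 \otimes a \in A_B = B \otimes_{A'} A$. Using the formula $(\partial_B)_k = \mathrm{id}_B \otimes_{A'} \partial_k$ established inside the proof of the preceding lemma, we obtain $(\partial_L)_{\mu(\partial_L)}(a') = 1 \otimes \partial_{\mu(\partial)}(a)$, whose inverse $1 \otimes \partial_{\mu(\partial)}(a)^{-1}$ lies in $A_B$. This establishes $(\heartsuit)$ for $(A_B,\partial_L)$.

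Finally, the preceding lemma identifies $B$ as the ring of $\partial_B$-constants on $A_B$, equivalently $B = L' \cap A_B$ inside $L$. Choosing a defining family $(v_i)_{i \in I}$ of discrete valuations on $L$ that exhibits the Krull domain $A_B$, I would restrict each $v_i$ to $L'$; the common intersection of the resulting valuation rings is $L' \cap A_B = B$, and the finite-support condition passes verbatim to the restrictions, so $B$ is a Krull domain. I expect no serious obstacle: the argument is a routine transport-of-structure via the preceding lemma, the only mildly subtle point being the tacit use that $L' \otimes_{K'} K$ is already a field, which is automatic since it equals the fraction field $L$ of the integral domain $A_B$ and is finite-dimensional over $L'$.
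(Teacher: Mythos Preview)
Your proposal is correct and follows essentially the same approach as the paper's proof: transport $\mu$, the rank, and hence the exponent via the preceding lemma; read off $[L:L']=[K:K']$ from the tensor description of $L$; push the unit condition through $1\otimes a$ using the identity $(\partial_B)_k=\mathrm{id}_B\otimes_{A'}\partial_k$; and conclude that $B=L'\cap A_B$ is Krull by restricting the defining valuations, exactly as in the general discussion at the start of subsection~\ref{radical}. Your write-up is simply more explicit than the paper's, which compresses all of this into three lines.
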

\begin{proof}
Since $\mu(\partial_L)=\mu(\partial)$ one has
$[L':L]=[K':K]=p^{n(\partial)}=p^{n(\partial_L)}$. Take $a\in A$
such that $\partial_{\mu(\partial)}(a)\in A^\times$. It follows
that $\partial_{\mu(\partial_L)}(1\otimes a)= 1\otimes
\partial_{\mu(\partial)}(a)\in (A_B)^\times$. This shows $(\heartsuit)$ for $(A_B,\partial_L)$.
We have explained above that in this case, $B=L'\cap A_B$ is again
a Krull domain.
\end{proof}

We apply these results to the following special case
\cite[IV.9]{Jacobson}. Let $k$ be a field of characteristic $p>0$
and let $A:=k[t^{\pm 1}]$ be the ring of Laurent polynomials in
one variable $t$ over $k$ and let $K$ be its fraction field. For
any $m\geq 0$ we have the ring homomorphism
$$A\rightarrow A[T]_m, \hskip5pt t\mapsto {\rm class~of~}t+T$$ which is a higher derivation $\partial$ of rank $m$.
Let $m'\geq 0$ and consider $\partial$ for $m:=p^{m'}-1$. Let
$A':=k[t^{\pm p^{m'}}]$ and write $K'$ for its quotient field. We
have
$$\partial(t^{p^{m'}})=\partial(t)^{p^{m'}}=(t+T)^{p^{m'}}=t^{p^{m'}}$$
and therefore $\partial$ is $A'$-linear. Since $K'[t]=K$ we obtain
by localization at $A'-\{0\}$ a $K'$-linear higher derivation
$$\partial: K\rightarrow K[T]_m$$ of rank $m$ that maps $t$ to (the class of) $t+T$.
Note that for $j=0,...,m$ each $\partial_j$ is a $K'$-linear
endomorphism on $K$ whose effect on the $K'$-basis $1,t,...,t^{m}$
of $K$ is given by
\begin{numequation}\label{equ-binomial}\De_jt^{i}={i\choose j}t^{i-j}\end{numequation} where
${i\choose j}=0$ if $j>i$. This is obvious from the binomial
expansion of $\partial(t^{i})=(t+T)^{i}$.
\begin{lemma}\label{lem-hypothesis}
The nontrivial higher derivation $\partial$ has the invariants
$\mu(\De)=1$ and $n(\De)=m'$ and the subring $A$ is invariant. The
field of constants equals $K'$ with $K'\cap A=A'$. The ordered pair
$(A,\De)$ satisfies $(\heartsuit)$.
\end{lemma}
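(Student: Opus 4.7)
The plan is to extract all four assertions from the explicit binomial formula $\partial_j(t^i) = \binom{i}{j} t^{i-j}$ recorded in (\ref{equ-binomial}), together with the decomposition $K = \bigoplus_{i=0}^{p^{m'}-1} K' t^i$ as a $K'$-vector space.

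First I would read off the invariants. Since $\partial_1(t) = 1 \neq 0$ the derivation is nontrivial with $\mu(\partial) = 1$, and then $n(\partial) = \min\{n : p^{m'} - 1 < p^n\} = m'$ is immediate from the definition. Invariance of $A$ under $\partial$ is built into the construction, because $\partial(t) = t + T = t(1 + t^{-1}T)$ is a unit in $A[T]_m$ (the factor $1 + t^{-1}T$ being a unit as $T$ is nilpotent). The paragraph preceding the lemma already records that the extension $\partial$ on $K$ is $K'$-linear, so $K' \subseteq K^{\partial}$; equivalently, $\partial(t^{p^{m'}}) = (t+T)^{p^{m'}} = t^{p^{m'}}$ in $A[T]_m$ since $T^{p^{m'}} = 0$.

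The crux is the reverse inclusion $K^{\partial} \subseteq K'$. Since $K = K'(t)$ with $t$ a $p^{m'}$-th root of $t^{p^{m'}} \in K'$ and $X^{p^{m'}} - t^{p^{m'}}$ is irreducible over $K'$ (the element $t^{p^{m'}}$ is not a $p$-th power in $K' = k(t^{p^{m'}})$), one has $[K:K'] = p^{m'}$ with $K'$-basis $\{1, t, \ldots, t^{p^{m'}-1}\}$. Given $x \in K^{\partial}$, write $x = \sum_{i=0}^{p^{m'}-1} c_i t^i$ with $c_i \in K'$ and suppose some $c_{i_0}$ with $i_0 \geq 1$ is nonzero, choosing $i_0$ minimal. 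Applying $\partial_{i_0}$, which is legitimate because $1 \leq i_0 \leq p^{m'} - 1 = m$, and using the binomial formula yields
\[
\partial_{i_0}(x) \;=\; \sum_{i \geq i_0} \binom{i}{i_0}\, c_i\, t^{i - i_0},
\]
whose coefficient of $t^0$ in the $K'$-basis expansion equals $\binom{i_0}{i_0} c_{i_0} = c_{i_0} \neq 0$, contradicting $\partial_{i_0}(x) = 0$. Hence $x = c_0 \in K'$. The same argument applied to the parallel $A'$-module decomposition $A = \bigoplus_{i=0}^{p^{m'}-1} A' t^i$ gives $K' \cap A = A'$.

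Finally, condition $(\heartsuit)$ falls out: the degree computation above gives $[K:K'] = p^{m'} = p^{n(\partial)}$, and the element $a := t \in A$ satisfies $\partial_{\mu(\partial)}(t) = \partial_1(t) = 1 \in A^\times$. The only genuine obstacle is establishing $K^{\partial} \subseteq K'$; this rests on the combinatorial identity $\binom{i_0}{i_0} = 1$ together with the fact that the entire range of operators $\partial_1, \ldots, \partial_m$ is available, which is precisely what forces the choice $m = p^{m'} - 1$ rather than anything smaller.
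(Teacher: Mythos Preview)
Your proof is correct. The main difference from the paper lies in the argument for $K^{\partial}\subseteq K'$. The paper argues indirectly: since $K^{p^{m'}}\subseteq K'\subseteq K^{\partial}$, the extension $K/K^{\partial}$ is purely inseparable, so the minimal polynomial of $t$ over $K^{\partial}$ has the form $X^{p^n}-a$; if $n<m'$ then $t^{p^n}$ would be a constant, but (\ref{equ-binomial}) gives $\partial_{p^n}(t^{p^n})=1$, a contradiction, forcing $[K:K^{\partial}]=p^{m'}$ and hence $K^{\partial}=K'$. You instead work directly with the $K'$-basis $\{1,t,\ldots,t^{m}\}$ and kill the lowest nonzero coefficient of a putative constant by applying $\partial_{i_0}$. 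Your argument is more elementary and self-contained (it avoids invoking the shape of minimal polynomials in purely inseparable extensions), and it has the pleasant side effect of yielding $K'\cap A=A'$ by the identical computation on the $A'$-basis of $A$; the paper instead deduces $K'\cap A=A'$ from the fact that $A'$ is integrally closed. The paper's route is slightly more conceptual in that it only needs to test constancy of the specific elements $t^{p^n}$ rather than a general linear combination.
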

\begin{proof} First of all, $[K:K']\leq p^{m'}$.
From $\partial(t)=t+T$ we deduce $\partial_1(t)=1$ whence
$\mu(\partial)=1$. The identity $n(\De)=m'$ is then obvious. Let
$K''$ be the field of constants of $\partial$. %By Lem.
%\ref{lem-constants}
We have $$K^{p^{m'}}\subseteq K' \subseteq K''\subseteq K$$ and so
the field extension $K/K''$ is purely inseparable. Hence the
minimal polynomial of $t$ over $K''$ has the form $X^{p^n}-a$ with
some $a\in K''$ and some $n$. Suppose $n<m'$ so that $p^n\leq
p^{m'}-1=m$. Then $t^{p^n}=a\in K''$ is a constant. But
(\ref{equ-binomial}) implies $\De_{p^n}(t^{p^n})=1$, a
contradiction. Hence $[K:K'']=p^{m'}$ and our initial remark
implies $K'=K''$. Since $A'$ is integrally closed, we have $K'\cap
A=A'$. Finally, $[K:K']=p^{m'}=p^{n(\De)}$ and $\De_1(t)=1\in
A^\times$. Thus the ordered pair $(A,\De)$ satisfies $(\heartsuit)$.
\end{proof}

For any $m'\geq 0$ we call the homomorphism $\partial:
K\rightarrow K[T]_m$ induced by $t\mapsto t+T$ {\it the standard
higher derivation over $k$ of exponent $m'$}. It has rank
$m=p^{m'}-1$.

\section{Affinoid algebras and Quillen's theorem}

%The use of filtered and graded techniques to obtain $K$-theoretic
%results on nonarchimedean spaces occurs in early work of Laurent
%Gruson (\cite{GrusonFi}). More recently, Michael Temkin has used
%the same methods in order to study local properties of
%nonarchimedean spaces (\cite{TemkinI},\cite{TemkinII}). Let $k$ be
%a field complete with respect to a nontrivial nonarchimedean
%valuation $|.|$. To be able to work with $\bbZ$-filtrations we
%{\it suppose} in the following that $|.|$ is discrete. Fix a
%uniformizer $\pi\in k$. Let $\bar{k}$ be the residue field of $k$.

We let $\Gamma=\bbR^\times_+$ and let $k$ be a nonarchimedean
field which is complete with respect to a nontrivial absolute
value $|.|$. We assume that $|.|$ is a {\it discrete} valuation,
i.e. $|k^\times|=|\pi|^{\bbZ}$ for some uniformizing element
$\pi\in k$ with $0<|\pi|<1$. We then have
$$\tgk \car \tk_1[t^{\pm 1}]$$ by mapping the principal symbol
of $\pi$ to the variable $t$. In particular, the ring $\tgk$ is
noetherian. Let $p$ be the characteristic exponent of the residue
field $\tk_1$ of $k$.

\vskip8pt

Let $A$ be a strictly affinoid algebra over $k$ which is an
integral domain. Let $d$ be the dimension of $A$. By the Noether
normalization lemma \cite[Cor. 6.2.2/2]{BGR} there is an
injective homomorphism $k\{T_1,...,T_d\}\hookrightarrow A$ which
is finite and strict with respect to spectral norms. By loc.cit.,
Prop. 6.2.2/4, there is a number $c\geq 1$ such that $|A|\subseteq
|k|^{1/c}$. Let $q=|\pi|^{1/c}$. We then have a decreasing
complete and exhaustive $\bbZ$-filtration on $A$

$$\cdot\cdot\cdot\subseteq F_{s+1}A\subseteq F_sA\subseteq\cdot\cdot\cdot$$

by the additive subgroups
\[
  F_s A:=\{a\in A,~|a|\leq q^{-s}\}=A_{q^{-s}}\]
for  $s\in\bbZ$. Letting $gr_s A:=F_sA/F_{s+1}A$ we have
\[\tgA=\oplus_{s\in\bbZ}~gr_s A\] for the graded reduction of the affinoid algebra $A$.
%By \cite[Cor. III.3]{GrusonFi}, the ring $\gor(A)$
%is a finite module over $\gor(T_d(k))$ and therefore noetherian,
%too.
The ring homomorphism $\tgk\{T_1,...,T_d\}\hookrightarrow \tgA$ is
finite according to \cite[Prop. 3.1(ii)]{TemkinII}. In particular,
the ring $\tgA$ is noetherian.

\vskip8pt

Besides $\tgA$ we need to introduce the {\it Rees ring}
$R(A)_\bullet$ of the filtered ring $A$ \cite{LVO}. To this end
let $A[X^{\pm 1}]$ be the Laurent polynomial ring over $A$ in a
variable $X$ with its $\bbZ$-gradation by degree in $X$. Then
$R(A)_\bullet$ equals the subring
$$ R(A)_\bullet=\oplus_{s\in\bbZ} (F_s\;A)X^s\subseteq A[X^{\pm 1}]$$
with its induced gradation. Write $(R(A)_\bullet)_{(X)}$ for its
localization at the multiplicatively closed subset
$\{X,X^2,...\}.$ One has the identities

\begin{numequation}\label{equ-rees} (R(A)_\bullet)_{(X)}\simeq A[X^{\pm 1}] \hskip10pt {\rm and}\hskip10pt \RA/X\RA\simeq
\tgA.\end{numequation}

Since $A$ is reduced, the filtration $F_\bullet A$ on $A$ is
complete. Since $\tgA$ is noetherian, so is $R(A)_\bullet$
according to \cite[Prop. II.1.2.3]{LVO}.

\vskip8pt

Recall that a commutative noetherian ring $R$ is called {\it
regular} if all its localizations at prime ideals are regular
local rings. By the Auslander-Buchsbaum formula \cite[Thm.
3.7]{AuslanderBuchsbaum} this is equivalent to $R$ having finite
{\it global dimension}, i.e. there is a number $0\leq d <\infty$
such that any finitely generated module has a projective
resolution of length $\leq d$. In this case, the global dimension
${\rm gld}(R)$ of $R$ is the smallest of such numbers $d$ and
coincides with the (Krull) dimension of $R$.
\begin{prop}
Suppose that $\tgA$ is a regular ring. Then $A$ and $\RA$ are
regular rings with ${\rm gld}(A)\leq {\rm gld}(\tgA)$ and ${\rm
gld}(\RA)\leq 1+{\rm gld}(\tgA)$.
\end{prop}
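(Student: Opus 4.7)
The plan is to derive both bounds from the filtered/graded ring theory of \cite{LVO}, using the Rees ring as a bridge between $\tgA$ and $A$.

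First I would record a preliminary that will be used throughout: since $A$ is an integral domain, the Laurent polynomial ring $A[X^{\pm 1}]$ is a domain, and hence so is the subring $R(A)_\bullet$. In particular, the homogeneous element $X$ of degree $1$ is a central non-zero-divisor in $R(A)_\bullet$, and by \eqref{equ-rees} the quotient $R(A)_\bullet/X\,R(A)_\bullet$ is isomorphic (as an ungraded ring) to $\tgA$, which is noetherian and regular of finite global dimension by hypothesis.

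For the bound on $R(A)_\bullet$ I would invoke the standard graded lifting result: if $R$ is a graded noetherian ring and $X\in R$ is a homogeneous central regular element of positive degree such that $R/XR$ has finite global dimension $d$, then $\mathrm{gld}(R)\leq d+1$. The proof of this lifting proceeds by reducing any finitely generated graded $R$-module $M$ to the $X$-torsion-free case via the short exact sequence $0\to T\to M\to M/T\to 0$ where $T$ is the $X$-torsion submodule, and then exploiting the short exact sequence $0\to M'\xrightarrow{X} M'\to M'/XM'\to 0$ together with a change-of-rings spectral sequence to bound $\mathrm{pd}_R(M')$ in terms of $\mathrm{pd}_{R/XR}(M'/XM')$. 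This is exactly \cite[III.3]{LVO}, applied to $R=R(A)_\bullet$ and $X$ as above.

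For the bound on $A$ itself I would appeal to Quillen's theorem for Zariskian filtered rings (the homological version, stated in \cite[II.2--III.2]{LVO}): if $\tgA$ is noetherian and regular, then the same holds for $A$ with $\mathrm{gld}(A)\leq \mathrm{gld}(\tgA)$. To apply it, I must verify that $F_\bullet A$ is Zariskian. The filtration is complete (as the text already records, $A$ is a reduced strictly affinoid algebra and the spectral norm is the Banach norm, so $A$ is $F_\bullet$-complete), and the Rees ring $R(A)_\bullet$ has just been seen to be noetherian: these are precisely the conditions of \cite[Prop.~II.2]{LVO} that characterise Zariskian filtrations, so the hypothesis is met.

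The main technical hurdle I expect is the verification that $F_\bullet A$ qualifies as Zariskian in the precise sense of \cite{LVO}; everything else is a direct appeal to results in that monograph. Note that the resulting statement $\mathrm{gld}(A)\leq\mathrm{gld}(\tgA)$ is strictly sharper than the bound one would get by naively passing through $R(A)_\bullet$ via $A\simeq R(A)_\bullet/(X-1)R(A)_\bullet$, which is why the Zariskian framework is essential here rather than optional.
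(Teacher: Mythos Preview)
Your proposal is correct and takes essentially the same approach as the paper: the paper's proof is a one-line citation of \cite{LVOGlobalAuslanderRees}, a companion paper to the monograph \cite{LVO} that you invoke, and your expanded sketch (verifying that $F_\bullet A$ is Zariskian via completeness and noetherianity of $R(A)_\bullet$, then lifting regularity from $\tgA$ to $A$ and to $R(A)_\bullet$) is precisely the content behind that citation.
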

\begin{proof}
This is the commutative case of the main result of
\cite{LVOGlobalAuslanderRees}.
\end{proof}
The regularity of $\RA$ implies that any finitely generated graded
$\RA$-module has a finite resolution by finitely generated graded
projective $\RA$-modules with degree zero morphisms
\cite[6.5]{MCR}.

\vskip8pt

Let $K_0(A)$ and $K_0(\tgA)$ be the Grothendieck group of the ring
$A$ and $\tgA$ respectively. We have the following generalization
of the $K_0$-part of Quillen's theorem \cite{QuillenH} which is
due to Li Huishi, M. van den Bergh and F. van Oystaeyen
\cite[Cor. 2.6]{LVOVB}.
\begin{prop}
Suppose $\tgA$ is regular. There is a group isomorphism
\begin{equation}\label{homo}
\nu:
K_0(A)\stackrel{\cong}{\longrightarrow}K_0(\tgA)\end{equation}
such that $\nu([A])=[\tgA]$.
\end{prop}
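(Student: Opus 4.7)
The plan is to use the Rees ring $R(A)_\bullet$ from (\ref{equ-rees}) as a bridge between $A$ and $\tgA$. By the preceding proposition $R(A)_\bullet$ is noetherian and regular, so every finitely generated (graded) $R(A)_\bullet$-module admits a finite resolution by finitely generated (graded) projective $R(A)_\bullet$-modules, and similarly for $A$ and $\tgA$. Hence all of $K_0(A)$, $K_0(R(A)_\bullet)$ and $K_0(\tgA)$ coincide with the Grothendieck groups $G_0$ of finitely generated modules, and the central element $X\in R(A)_\bullet$ is a nonzerodivisor of positive degree satisfying $R(A)_\bullet/XR(A)_\bullet\simeq\tgA$ and $(R(A)_\bullet)_{(X)}\simeq A[X^{\pm 1}]$.

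I would define $\nu$ as follows. Given a finitely generated $A$-module $M$, choose a good filtration $F_\bullet M$ compatible with $F_\bullet A$ (such a filtration exists because $A$ is filtered noetherian with exhaustive complete filtration) and form the Rees module $R(M)_\bullet=\oplus_s (F_sM)X^s\subseteq M[X^{\pm 1}]$, a finitely generated graded $R(A)_\bullet$-module on which $X$ acts as a nonzerodivisor. Then set
$$\nu([M]):=[R(M)_\bullet/XR(M)_\bullet]\in K_0(\tgA),$$
forgetting the grading on the right. Well-definedness and additivity reduce to the standard facts that any two good filtrations on $M$ are comparable and that a short exact sequence of finitely generated $A$-modules admits compatible good filtrations, so that the corresponding Rees modules fit into a short exact sequence on which $X$ is still regular. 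Taking the trivial filtration on $M=A$ and applying (\ref{equ-rees}) gives $\nu([A])=[\tgA]$.

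To show $\nu$ is an isomorphism I would invoke the Quillen $K_0$-localization sequence for the central regular element $X$ in the regular noetherian ring $R(A)_\bullet$,
$$K_0(\tgA)\longrightarrow K_0(R(A)_\bullet)\longrightarrow K_0(A[X^{\pm 1}])\longrightarrow 0,$$
together with the homotopy invariance $K_0(A[X^{\pm 1}])\simeq K_0(A)$ (valid because $A$ is regular noetherian, as follows from the previous proposition). A good filtration on a finitely generated $A$-module $M$ supplies an explicit lift of $[M]\in K_0(A)\simeq K_0(A[X^{\pm 1}])$ to $[R(M)_\bullet]\in K_0(R(A)_\bullet)$, and the reduction-mod-$X$ map then sends this lift to $\nu([M])$. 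Consequently, a two-sided inverse $K_0(\tgA)\to K_0(A)$ exists if and only if the displayed localization sequence is exact at the first two terms (so that reduction mod $X$ descends through the quotient $K_0(R(A)_\bullet)/\operatorname{im} K_0(\tgA)\simeq K_0(A)$ to give a well-defined map back).

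The main obstacle is exactly this exactness, in particular that the specialization $K_0(\tgA)\to K_0(R(A)_\bullet)$ is injective with image the kernel of the localization map. This is where regularity of $R(A)_\bullet$ and of $\tgA$ is essential: it ensures that Quillen's dévissage/localization formalism applies verbatim, every finitely generated graded $R(A)_\bullet$-module supported on $V(X)$ admits a finite filtration by finitely generated graded $\tgA$-modules, and the ungraded class of $R(M)_\bullet/XR(M)_\bullet$ equals $\sum_s[\operatorname{gr}_s M]$ independently of the chosen good filtration. This is precisely the content of \cite[Cor.~2.6]{LVOVB} in our commutative graded setting.
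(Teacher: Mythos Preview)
Your construction of $\nu$ via good filtrations is correct and, once unwound, agrees with the paper's description: the paper's lattice $P''\subseteq A[X^{\pm 1}]\otimes_A P$ is a Rees module for a good filtration on $P$, and since $X$ is regular on $P''$ the alternating sum $\sum(-1)^i[P_i/XP_i]$ collapses to $[P''/XP'']=[\operatorname{gr} P]$. Well-definedness and additivity go through as you say.

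The gap is in your bijectivity argument. You write down the \emph{ungraded} localization sequence
\[
K_0(\tgA)\xrightarrow{\,i_*\,}K_0(\RA)\xrightarrow{\,j^*\,}K_0(A[X^{\pm 1}])\longrightarrow 0
\]
and then claim that $i_*$ is injective. It is not: the resolution $0\to\RA\xrightarrow{X}\RA\to\tgA\to 0$ shows $i_*[\tgA]=[\RA]-[\RA]=0$ in $K_0(\RA)=G_0(\RA)$. So the sequence cannot be short exact on the left, and nothing in Quillen's ungraded localization formalism forces $\nu$ to be an isomorphism; to control $\ker i_*$ you would have to climb to $K_1(A[X^{\pm 1}])$, which does not obviously help. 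Your final paragraph invokes graded d\'evissage but never switches the $K$-groups themselves to the graded category, so the obstacle you identify is not actually removed by what you cite.

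The paper (following \cite{LVOVB}) avoids this by working with \emph{graded} $K_0$ throughout. The relevant square is
\[
\xymatrix{
K_{0g}(\RA)\ar[r]^-{\beta}\ar[d]_{\simeq}^{i} & K_0(A)\ar[d]^{\nu}\\
K_{0g}(\tgA)\ar[r]^-{\gamma} & K_0(\tgA),
}
\]
where $\beta$ is homogeneous localization followed by the equivalence between $A$-modules and graded $A[X^{\pm 1}]$-modules, and $\gamma$ forgets the grading. The decisive input is that reduction mod $X$ induces an \emph{isomorphism} $i:K_{0g}(\RA)\car K_{0g}(\tgA)$; this uses that the filtration $F_\bullet A$ is complete (Zariskian), so graded idempotents over $\tgA$ lift to graded idempotents over $\RA$. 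There is no ungraded analogue of this lifting statement, which is exactly why your ungraded sequence cannot close. Once $i$ is known to be an isomorphism, $\nu$ is the map making the square commute, and its bijectivity follows from comparing the kernels of the two surjections $\beta$ and $\gamma$.
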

Let us explain where the isomorphism comes from. It is defined via
the following commutative diagram

\[\xymatrix{
  K_{0g}(\RA)  \ar[r]^>>>>>{\beta} \ar[d]_{\simeq}^{i} &K_0(A)   \ar[d]_{\simeq}^{\nu} \\
  K_{0g}(\tgA)\ar[r]^{\gamma} & K_0(\tgA)  }\] where the horizontal maps are surjective and the
vertical maps are isomorphisms. Here, $K_{0g}$ equals $K_0$
applied to the category of finitely generated projective graded
modules over a $\bbZ$-graded ring with degree zero graded
morphisms. Note here that a projective object in
the graded category is the same as a graded module which is
projective after forgetting gradations \cite[Prop.
7.6.6]{MCR}. The upper horizontal map comes from the homogeneous
localization $\RA\rightarrow(\RA)_{(X)}=A[X^{\pm 1}]$ and the
equivalence of categories $M\mapsto A[X^{\pm 1}]\otimes_A M$
between finitely generated $A$-modules and finitely generated
$\bbZ$-graded $A[X^{\pm 1}]$-modules. Note that a quasi-inverse to this latter equivalence is
given by passing from a $\bbZ$-graded $A[X^{\pm 1}]$-module to its
homogeneous part of degree zero. The map $\gamma$ comes from the
functor "forgetting the gradation". The map $i$ is induced from
the canonical map $\RA\rightarrow\RA/X\RA=\tgA$ and the map $\nu$
is defined so as to make the diagram commutative. Let us make the
map $\nu$ completely explicit. Let $P$ be a finitely generated
projective $A$-module and $P':=A[X^{\pm 1}]\otimes_A P$ the
corresponding $\bbZ$-graded $A[X^{\pm 1}]$-module. Choose a
finitely generated graded $\RA$-submodule $P''$ inside $P'$ that
generates $P'$ over $A[X^{\pm 1}]$.
%For example, if x_1,...,x_n are A-module generators of P we may take
%P''=\sum_{i=1,...,n}\tilde{A} (1\otimes x_i)$$ inside $P'$.
Since $\RA$ is regular, there is a finite resolution by finitely
generated projective graded $R(A)_\bullet$-modules and degree zero
morphisms
$$ 0\longrightarrow P_n\longrightarrow
P_{n-1}\longrightarrow\cdot\cdot\cdot\longrightarrow
P_0\longrightarrow P''\longrightarrow 0.$$ Then
$$\nu([P])=\sum_{i=0,...,n}(-1)^{i}\hskip3pt [P_i/XP_i].$$
using $\RA/X\RA=\tgA$.

\vskip8pt

Let $\Pic(R)$ be the {\it Picard group} of a commutative ring $R$,
i.e. the abelian group (with tensor product) of isomorphism
classes of finitely generated projective $R$-modules of rank $1$.
\begin{prop}
Let $A$ be of dimension $1$ with $\tgA$ regular. The natural
homomorphism $$\Pic(\RA)\longrightarrow \Pic((\RA)_{(X)})$$ is
surjective.
\end{prop}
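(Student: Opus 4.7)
The plan is to identify Picard groups with divisor class groups via regularity and then invoke the standard Krull-theoretic surjectivity of class groups under localization. By the preceding proposition $\RA$ is a regular noetherian ring, and since $\RA\subseteq A[X^{\pm 1}]$ with the latter a domain, $\RA$ is itself a domain. A regular noetherian domain is integrally closed and locally factorial, hence a Krull domain in the sense of \ref{krull} whose divisor class group coincides with its Picard group. Thus $\Cl(\RA)=\Pic(\RA)$ and, applying the same reasoning to the localization, $\Cl((\RA)_{(X)})=\Pic(A[X^{\pm 1}])$.

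We now apply the standard fact \cite[VII.\S 1]{B-CA} that for any Krull domain $R$ and multiplicative subset $S\not\ni 0$, the canonical map $\Cl(R)\rightarrow \Cl(S^{-1}R)$ is surjective with kernel generated by the classes of height-one primes meeting $S$. Taking $R=\RA$ and $S=\{X,X^2,\ldots\}$ and using the identification $(\RA)_{(X)}\simeq A[X^{\pm 1}]$ from (\ref{equ-rees}), we obtain the required surjection $\Pic(\RA)\twoheadrightarrow \Pic(A[X^{\pm 1}])$.

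The main input is really the regularity of $\RA$, which is precisely what the preceding proposition provides; everything else is routine Krull theory, so there is no serious obstacle. A parallel route, better aligned with the $K$-theoretic spirit of this section, is to appeal to Quillen's localization theorem for the regular noetherian scheme $\Spec\RA$ with closed subscheme $V(X)=\Spec\tgA$ (regular by hypothesis) and open complement $\Spec A[X^{\pm 1}]$, producing a surjection $K_0(\RA)\twoheadrightarrow K_0(A[X^{\pm 1}])$; composition with the surjective determinant map $\det\colon K_0\twoheadrightarrow\Pic$ then yields the same conclusion.
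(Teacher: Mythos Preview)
Your argument is correct and rests on the same underlying idea as the paper's: regularity of $\RA$ gives local factoriality, which in turn makes the passage from $\RA$ to its localization at $X$ surjective on Picard/class groups. The paper cites \cite[Prop.~7.17]{Bass} directly for $\Pic$, whereas you route through the identification $\Pic=\Cl$ and Nagata's theorem from \cite[VII.\S1]{B-CA}; these are equivalent packagings of the same fact.

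One point of contrast worth flagging: the paper's proof is constructive. For an invertible ideal $I\subseteq(\RA)_{(X)}$ it exhibits the preimage explicitly as the bidual $(I\cap\RA)^{**}$, a reflexive $\RA$-module lying in $\Pic(\RA)$ which localizes back to $I$. This explicit description is then reused in the proof of the next proposition (the comparison $\Pic(A)\simeq\Pic(\tgA)$), where one must check that the lift carries a natural graded structure. Your Nagata-style argument yields surjectivity abstractly but does not hand you this particular lift, so you would need to supply the bidual construction separately when it is invoked downstream. Neither approach uses the dimension-$1$ hypothesis on $A$, which is indeed superfluous for this proposition.
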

\begin{proof}
To ease notation write $S:=\RA$. Since $S$ is regular, all its
localizations at prime ideals are factorial rings by the
Auslander-Buchsbaum-Nagata theorem \cite{AuslanderBuchsbaumII}.
Since $S$ is an integral domain, $X$ is not a zero divisor and
therefore \cite[Prop. 7.17]{Bass} implies the claim. We briefly
recall from loc.cit. how one produces preimages. Let $P\in
\Pic(S_{(X)})$. Since $S$ is an integral domain, $P$ is isomorphic
to an invertible ideal $I\subseteq S_{(X)}$. Let $I_0=I\cap S$.
The bidual $(I_0)^{**}={\rm \Hom}_{S}({\rm \Hom}_{S}(I_0,S),S)$ is a
reflexive $S$-module and lies in $\Pic(S)$. Since $I$ is a
reflexive $S_{(X)}$-module, $(I_0)^{**}$ localizes to $I$.
\end{proof}
Remark: If in the situation of the proposition we have that $\tgA$
is an integral domain, then the surjection on Picard groups is
even a bijection. Indeed, in this case $X\in R(A)_\bullet$
generates a prime ideal according to (\ref{equ-rees}) and then
\cite[Prop. III.7.15]{Bass} yields the assertion.

\vskip8pt

Back in the general case, let $M,N$ be finitely generated
projective $A$-modules of rank $m,n$ respectively. The formula
$$\wedge^{m+n}(M\oplus
N)=\sum_{k=0,...,m+n}\wedge^{k}(M)\otimes\wedge^{m+n-k}(N)=\wedge^{m}(M)\otimes\wedge^n(N)$$
shows that $[M]\mapsto [\wedge^m(M)]$ defines a group homomorphism
$\det: K_0(R)\rightarrow \Pic(R)$. Let ${\rm rk}(M)\in\bbN$ denote
the rank of a finitely generated projective $A$-module $M$ (recall
that $A$ is an integral domain). We have the short exact sequence

\begin{equation}\label{equ-ss} 0\longrightarrow SK_0(A)\longrightarrow K_0(A)\stackrel{{\rm
rk}\oplus\det}{\longrightarrow} \bbZ\oplus \Pic(A)\longrightarrow
0.\end{equation}

The group $SK_0(A)$ consists of classes $[P]-[A^m]$ where $P$ has
some rank $m$ and $\wedge^m P\cong A$. If $A$ has dimension $1$,
then, since $A$ is noetherian, $SK_0(A)=0$ by Serre's theorem
\cite[Thm. IV.2.5]{Bass}. We therefore have
$K_0(A)\simeq\bbZ\oplus \Pic(A)$ in this case.
 %In general, this does {\it not} imply that $\tgA$ has
%dimension one.

\begin{prop}\label{prop-comparisonPic}
Let $A$ be of dimension $1$ such that the ring $\tgA$ is regular.
There is a commutative diagram of abelian groups
\[\xymatrix{
0\ar[r]&  \bbZ\ar[r] \ar[d]^{=} &  K_0(A) \ar[r]^>>>>>{\det} \ar[d]_{\simeq}^{\nu} &  \Pic(A) \ar[r] \ar[d]_{\simeq}^{\bar{\nu}} & 0\\
0\ar[r]&  \bbZ \ar[r] & K_0(\tgA) \ar[r]^>>>>>{ \det} & \Pic(\tgA)
\ar[r]& 0 }\] in which rows are exact and vertical maps are
isomorphisms. The rank map induces a canonical splitting in both
rows.
\end{prop}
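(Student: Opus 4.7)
My plan is to establish the top row via Serre's theorem, then transport the whole structure to $\tgA$ through $\nu$, and finally define $\bar{\nu}$ so that the right-hand square commutes.

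First, since $A$ is a noetherian integral domain of dimension $1$, Serre's theorem (recalled just before the proposition) gives $SK_0(A)=0$. Hence the sequence (\ref{equ-ss}) degenerates to
\begin{equation*}
0\longrightarrow\bbZ\longrightarrow K_0(A)\stackrel{\det}{\longrightarrow}\Pic(A)\longrightarrow 0,
\end{equation*}
and the rank map $K_0(A)\to\bbZ$ splits $\bbZ\to K_0(A)$, $n\mapsto n[A]$, because $\mathrm{rk}([A])=1$. This establishes the top row with its rank splitting.

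Next I claim $\nu$ preserves rank. Unwinding the commutative square defining $\nu$: the map $\beta$ sends the class of a finitely generated graded projective $\RA$-module $P''$ to the degree-$0$ part of the Laurent localization $A[X^{\pm 1}]\otimes_{\RA}P''$, preserving generic rank; $\gamma$ merely forgets the grading; and $i$ sends $P''$ to $P''/XP''$, which has the same rank since $X$ is a regular element on a projective module. Combined with $\nu([A])=[\tgA]$, this shows $\nu$ restricts to an isomorphism between the rank-zero subgroups of $K_0(A)$ and $K_0(\tgA)$. I then define
\begin{equation*}
\bar{\nu}:=\det_{\tgA}\circ\,\nu\circ\,\det_A^{-1}\colon\Pic(A)\longrightarrow\Pic(\tgA),
\end{equation*}
so that the right-hand square commutes by construction. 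To make $\bar{\nu}$ explicit: given $L\in\Pic(A)$, represent it by an invertible $A$-ideal and lift the invertible $A[X^{\pm 1}]$-module $A[X^{\pm 1}]\otimes_A L$ to an invertible graded $\RA$-submodule $L''$, via the reflexive-hull recipe $L''=(L[X^{\pm 1}]\cap \RA)^{**}$ of the previous proposition; since $L''$ is itself projective the resolution in the definition of $\nu$ collapses, so $\nu([L])=[L''/XL'']$ and $\bar{\nu}([L])=[L''/XL'']\in\Pic(\tgA)$.

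The hard step is proving $\bar{\nu}$ bijective, equivalently that $\det$ induces an isomorphism between the rank-zero subgroup of $K_0(\tgA)$ and $\Pic(\tgA)$, i.e.\ that $SK_0(\tgA)=0$. Since $\tgA$ typically has Krull dimension $2$ (e.g.\ $\tgA=\tk_1[t^{\pm1}][T]$ already when $A=k\{T\}$), Serre's theorem is unavailable and one must use the Rees bridge: surjectivity of $\bar{\nu}$ reduces to surjectivity of reduction mod $X$, $\Pic(\RA)\to\Pic(\tgA)$, which one extracts from regularity of $\RA$ together with $X$ being a nonzero divisor (lifting line bundles along the quotient $\RA\to\RA/X\RA=\tgA$); injectivity is then formal from the bijectivity of $\nu$ and of $\det_A$, because the composite $\det_{\tgA}\circ\nu|_{\ker(\mathrm{rk})}$ would otherwise have a nontrivial kernel contradicting the bijectivity of $\det_A^{-1}\circ\bar{\nu}^{-1}$. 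Once $\bar{\nu}$ is bijective, the bottom row is exact as the image of the top row under the rank-preserving isomorphism $\nu$, and the rank map continues to split it.
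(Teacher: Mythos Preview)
Your last paragraph contains a genuine gap. The argument for injectivity of $\bar{\nu}$ is circular: you invoke ``the bijectivity of $\det_A^{-1}\circ\bar{\nu}^{-1}$'' to derive a contradiction, but $\bar{\nu}^{-1}$ only exists once $\bar{\nu}$ is known to be bijective, which is precisely what you are trying to prove. Separately, the surjectivity detour through $\Pic(\RA)\to\Pic(\tgA)$ is unnecessary: surjectivity of $\bar{\nu}$ is immediate, since $\det_{\tgA}$ and $\nu$ are both surjective and $\nu(\ker\det_A)=\nu(\bbZ)=\bbZ\subseteq\ker\det_{\tgA}$.

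The irony is that you already wrote down the decisive step and then walked past it. Your explicit computation $\nu([L])=[L''/XL'']$ with $L''/XL''\in\Pic(\tgA)$ says exactly that $\nu$ carries the set-theoretic section $s_A\colon\Pic(A)\to K_0(A)$, $[L]\mapsto[L]$, into the image of the corresponding section $s_{\tgA}$. From this, injectivity of $\bar{\nu}$ follows at once: if $\bar{\nu}([L])=\bar{\nu}([L'])$, then $\det_{\tgA}(\nu([L]))=\det_{\tgA}(\nu([L']))$; both $\nu([L])$ and $\nu([L'])$ lie in the image of $s_{\tgA}$, and $\det_{\tgA}$ is injective there (being a left inverse to $s_{\tgA}$), so $\nu([L])=\nu([L'])$, whence $[L]=[L']$ in $K_0(A)$ and therefore in $\Pic(A)$. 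This is how the paper argues. Once $\bar{\nu}$ is an isomorphism, exactness of the bottom row is a diagram chase; you never need to prove $SK_0(\tgA)=0$ directly---it falls out as a consequence.

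One further point you gloss over: for the formula $\nu([L])=[L''/XL'']$ to be valid via the defining square for $\nu$, you need $L''=(L[X^{\pm1}]\cap\RA)^{**}$ to be a \emph{graded} $\RA$-module (so that $[L'']\in K_{0g}(\RA)$) and the isomorphism $(L'')_{(X)}\simeq L[X^{\pm1}]$ to be graded of degree zero (so that $\beta([L''])=[L]$). The previous proposition only treats the ungraded Picard map; the gradedness requires a short additional argument using that $I_0=L[X^{\pm1}]\cap\RA$ inherits a grading from $\RA$, and that duals of finitely generated graded modules over a graded noetherian ring carry natural gradations.
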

\begin{proof}
Since $A$ is commutative, the map $1\mapsto[A]$ defines an
inclusion $\bbZ\hookrightarrow K_0(A)$ and similarly for $\tgA$.
In particular, the upper row is a split exact sequence and in the
lower row we have $\bbZ\subseteq \ker\det$. Since
$\nu([A])=[\tgA]$ the first square commutes. This implies a
surjective homomorphism $\bar{\nu}$ making the second square
commutative. It remains to show that $\bar{\nu}$ is injective.
Mapping the isomorphism class of a rank $1$ module $P$ to the
symbol $[P]$ in $K_0$ gives a set-theoretical section to both
determinant maps. It suffices to see that $\nu$ preserves the
image of these sections. So let $P$ be a finitely generated
projective $A$-module of rank $1$ and let $[P]\in K_0(A)$. Since
$A$ is an integral domain, $P$ is isomorphic to an invertible
ideal $J\subseteq A$. Consider the invertible ideal $$I=A[X^{\pm
1}]\otimes_A J=A[X^{\pm 1}]J\subseteq A[X^{\pm 1}]$$ and put
$I_0:=I\cap\RA$. Let $P'':=(I_0)^{**}$ be the bidual of the
$\RA$-module $I_0$. By the proof of the previous proposition, it
is a reflexive $R(A)_\bullet$-module in $\Pic(\RA)$ that localizes
to $I$. More precisely, $P''$ generates $I$ over $A[X^{\pm 1}]$ by
means of the duality isomorphism $I\car I^{**}$. Recall that
$R(A)_\bullet$ is noetherian. Hence, $I_0\subseteq\RA$ with its
induced gradation is a finitely generated graded $\RA$-module and
therefore $(I_0)^{*}$ has its natural $\bbZ$-gradation by degree
of morphisms \cite[Cor. 2.4.4]{NastasescuVO}. Since also
$(I_0)^{*}$ is finitely generated, a second application of
loc.cit. implies that $P''=(I_0)^{**}$ has a natural
$\bbZ$-gradation. Similarly, using that $A[X^{\pm 1}]$ is
noetherian, the module $I^{**}$ has a natural $\bbZ$-gradation. It
makes the duality isomorphism $I\car I^{**}$ into a graded
morphism of degree zero (loc.cit., Prop. 2.4.9). It follows that
the isomorphism $(P'')_{(X)}\simeq I$ is a graded isomorphism of
degree zero. One therefore has $[P'']\in K_{0g}(\RA)$ with
$$\beta([P''])=[J]=[P].$$ Consequently, $\nu([P])=[P''/XP''].$
Since $P''/XP''\in \Pic(\tgA)$, this completes the proof.

\end{proof}

\section{Picard groups of forms}

We keep the assumptions on the discretely valued field $k$. We
will work relatively to a finite wildly ramified extension
$k\subseteq\ell.$ Let $p^n=[\tgell:\tgk]$ be the degree of the
purely inseparable extension $\tgk\subseteq\tgell$. Then
$p^n=[\ell:k]$ according to the first remark in subsection
\ref{subsec-gradedreductions}. We restrict to the case of positive
characteristic ${\rm char}~ \tk_1=p>0$. Let $\varpi\in\ell$ be a
uniformizing element, i.e. $|\ell|=|\varpi|^{\bbZ}$ with
$0<|\varpi|<1$. We then have $\tgell \car \tell_1[t^{\pm 1}]$ by
mapping the principal symbol of $\varpi$ to $t$. Under this
identification $\tgk=\tk_1[t^{\pm p^n}].$ For simplicity, we
assume in the following that $\ell/k$ is {\it totally ramified},
i.e. $f=1$ and $\tell_1=\tk_1$. The general case can be treated
similarly but is more technical.

\vskip8pt

By our assumptions the ring extension $\tgk\subseteq\tgell$ is of
the form considered at the end of section \ref{descent}. In
particular, we have the standard higher derivation associated to
it.

\vskip8pt

Let $r\in\sqrt{|k^\times|}$ and $B:=k\{r^{-1}T\}$. Let $A$ be a
form of $B$ with respect to $\ell/k$. We will study the Picard
group of $A$ under the assumption that $A$ has {\it geometrically
reduced graded reduction}, i.e.
$$\tgk^{\rm alg}\otimes_{\tgk} \tgA$$ is reduced where $\tgk^{\rm
alg}$ denotes the graded algebraic closure of $\tgk$. By the usual
argument it suffices to check that
$\tgk^{p^{-1}}\otimes_{\tgk}\tgA$ is reduced.

Suppose now this condition holds. Then Lem. \ref{lem-keyreduced}
implies that $\tgA$ is a form of $\tgB$ for the extension
$\tgk\subseteq\tgell$ which will allow us to use $p$-radical
descent. We fix once and for all an isomorphism
$\tgB\car\tgA\otimes_{\tgk}\tgell$ which we use as an
identification in the following. We now forget all gradations and
treat all graded rings as ordinary rings.

\vskip8pt

Put $m:=p^n-1$. Let $K$ and $K'$ be the (ordinary) fraction fields
of $\tgell$ and $\tgk$. Since $K$ has characteristic $p>0$ we may
consider the standard higher derivation $$\partial: K\rightarrow
K[S]_m$$ of exponent $n$ on $K$ given by $t\rightarrow t+S$ (end
of section \ref{descent}). According to Lem. \ref{lem-hypothesis}
we have $\mu(\De)=1$ and the subring $\tgk$ is invariant. The
field of constants equals $K'$ with $K'\cap \tgell=\tgk$. The
ordered pair $(\tgell,\De)$ satisfies $(\heartsuit)$. Consider the
injective homomorphism $\tgk\rightarrow\tgA$ into the integral
domain $\tgA$. The ring $\tgB=\tgA\otimes_{\tgk} \tgell$ is an
integral domain. Let $L'=\Frac(\tgA)$ be the (ordinary) fraction
field of $\tgA$. Since $\tgB$ is integral over $\tgA$, we then
have
$$L=\Frac (\tgB)=\tgA\otimes_{K'} K$$ for the (ordinary) fraction field of $\tgB$. Now $\tgk$ is a principal ideal domain and, hence, any
torsion free module is flat. Hence, $\tgell$ is flat as a module
over $\tgk$ and the $\tgk$-submodule ${\rm Im}\hskip2pt
(\partial_0-\partial) \subseteq \tgell[S]_m$ associated with the
homomorphism $\partial=\partial_{\tgell}: \tgell\rightarrow
\tgell[S]_m$ is flat. According to Lem. \ref{lem-base} the field
$L$ admits a higher derivation
$$\De_L:={\rm id}_{L'}\otimes_{K'}\partial$$ of the same rank and
order as $\De$ such that the subring $\tgB$ is invariant. The
field of constants with respect to $\partial_L$ equals $L'$ and
$L'\cap \tgB=\tgA$ is a Krull domain. Since $\tgB$ is a Krull
domain, Cor. \ref{cor-baseexp} implies that $(\tgB,\De_L)$
satisfies $(\heartsuit)$. By Prop. \ref{prop-Baba} we therefore
have the group isomorphism

\[\Phi_{\tgB}: \ker\overline{\jmath}\car \cL_{\tgB}/\cL'_{\tgB}, \hskip10pt D{\rm~
mod~}~\dv_{\tgA}(K'^\times)\mapsto \frac{\De(x_D)}{x_D} {\rm
~mod~}\cL'_{\tgB}\] where $x_D\in K^\times$ is chosen such that
$$\dv_{\tgB}(x_D)=j(D)$$ and $$\overline{\jmath}:
\Cl(\tgA)\rightarrow \Cl(\tgB)$$ is the canonical map. Since
$\tgB=\tell_1[t^{\pm},T]$ is a factorial domain \cite[Prop.
VII.\S3.4.3 and Thm. VII.\S3.5.2]{B-CA} we have $\Cl(\tgB)=0$.
Hence

$$\Phi_{\tgB}: \Cl(\tgA)\car\cL_{\tgB}/\cL'_{\tgB}$$
for the class group of the Krull domain $\tgA$.

\begin{lemma}\label{lem-regular}
The ring $\tgA$ is a noetherian regular Krull domain of global
dimension $2$.
\end{lemma}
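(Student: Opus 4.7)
The plan is to deduce all four assertions about $\tgA$ from the structure of $\tgB$ together with the finite faithfully flat inclusion $\tgA\hookrightarrow\tgB$. The key input is that, since $\tgell$ is $\tgk$-free of rank $p^n$, the identification $\tgB\simeq\tgA\otimes_{\tgk}\tgell$ fixed in the text makes $\tgB$ into a free (hence faithfully flat) $\tgA$-module of rank $p^n$. Under $\tgell\simeq\tk_1[t^{\pm 1}]$ the ring $\tgB$ becomes a polynomial ring in one variable over a PID; it is therefore a noetherian UFD of Krull dimension $2$, and by Auslander--Buchsbaum--Serre it is regular with global dimension $2$.

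Two of the four properties of $\tgA$ are already in hand. Noetherianity was recorded at the start of Section~\ref{descent}: Noether normalization produces a finite injection $k\{T\}\hookrightarrow A$ whose associated graded map $\tgk[T]\hookrightarrow\tgA$ is again finite by \cite[Prop.~3.1(ii)]{TemkinII}, so $\tgA$ is a finite module over a noetherian ring. The Krull-domain property was established in the paragraph immediately preceding the lemma, where the identity $\tgA=L'\cap\tgB$ together with Cor.~\ref{cor-baseexp} yields the desired conclusion.

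It remains to verify regularity and to compute the global dimension, and I would do both via faithfully flat descent at primes. Given any prime $\frp\subset\tgA$, faithful flatness produces a prime $\frq\subset\tgB$ contracting to $\frp$; the induced map $\tgA_\frp\to\tgB_\frq$ is then a flat local homomorphism of noetherian local rings with $\tgB_\frq$ regular, so Matsumura's descent theorem (\emph{Commutative Ring Theory}, Thm.~23.7) forces $\tgA_\frp$ to be regular. Hence $\tgA$ is regular, and Auslander--Buchsbaum--Serre gives $\mathrm{gld}(\tgA)=\dim\tgA$; integrality of $\tgA\hookrightarrow\tgB$ then yields $\dim\tgA=\dim\tgB=2$. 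The only nontrivial step is the descent of regularity, and the freeness of $\tgell$ over $\tgk$ supplies the required faithful flatness essentially for free, so I anticipate no serious obstacle.
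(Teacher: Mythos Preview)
Your proof is correct, but your route to regularity differs from the paper's. You localize and invoke flat descent of regularity (Matsumura, Thm.~23.7) at each prime; the paper instead argues globally: since $\tgA$ is a module direct summand of the free $\tgA$-module $\tgB$, one has ${\rm gld}(\tgA)\le{\rm gld}(\tgB)=2$ by \cite[Thm.~7.2.8(i)]{MCR}, and finite global dimension over a noetherian ring forces regularity. Your argument is perhaps more geometric and requires only flatness, while the paper's uses the stronger direct-summand property (available here because $1,t,\ldots,t^{p^n-1}$ is a $\tgA$-basis of $\tgB$) to bound the global dimension in one stroke. For noetherianity you ascend from $\tgk[T]$ via graded Noether normalization (this is in the section on Quillen's theorem, not the one labelled \texttt{descent}); the paper instead descends from $\tgB$ via faithful flatness \cite[Cor.~I.\S3.5]{B-CA}. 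Both routes are short and equally effective; the remaining points (Krull domain from the preceding discussion, Krull dimension $2$ by integrality over $\tgB$) match.
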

\begin{proof}
We have already explained above that $\tgA$ is a Krull domain. The ring
extension $\tgA\hookrightarrow\tgB$ makes $\tgA$ a (bi-)module
direct summand of $\tgB$ and $\tgB$ is a free $\tgA$-module via
this extension. By \cite[Thm. 7.2.8(i)]{MCR} we have ${\rm
gld}(\tgA)\leq {\rm gld}(\tgB)=2$, hence $\tgA$ is regular. Its
global dimension coincides then with its Krull dimension and
\cite[Ex.9.2]{Matsumura} implies ${\rm gld}(\tgA)=2$. Since $\tgB$
is noetherian, so is $\tgA$ \cite[Cor. I.\S3.5]{B-CA}.
\end{proof}
If $P\subseteq\tgA$ is a height $1$ prime ideal of $\tgA$, the
localization $(\tgA)_P$ is a discrete valuation ring.  If
$I\subseteq\tgA$ is an ideal, we let $v_P(I)$ be its valuation,
i.e. the integer $\nu$ such that $I_P=P^\nu (\tgA)_P$ as ideals in
$(\tgA)_P$. Since $\tgA$ is a regular Krull domain, \cite[Cor.
I.3.8.1]{WeibelK} says that any divisor $D\in D(\tgA)$ is of the
form $D=\sum_P v_P(I)P$ for a unique invertible ideal $I\subseteq
\tgA$ and that $D\mapsto I$ induces an isomorphism $\Cl(\tgA)\car
\Pic(\tgA)$. Invoking Prop. \ref{prop-comparisonPic} we have
$\bar{\nu}: \Pic(A)\car \Pic(\tgA)$. This proves the following
proposition.

\begin{prop}
There are canonical isomorphisms
$$\Pic(A)\car \Pic(\tgA)\car {\rm \Cl}(\tgA)\car\cL_{\tgB}/\cL'_{\tgB}.$$
In particular, $\Pic(A)$ has exponent $p^n$.
\end{prop}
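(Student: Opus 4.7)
The plan is to concatenate the three isomorphisms already set up in the preceding discussion, since almost all the real work has been carried out. First I would invoke Lemma \ref{lem-regular} to know that $\tgA$ is a noetherian regular Krull domain (of global dimension $2$), which puts us in the setting of Proposition \ref{prop-comparisonPic}. That proposition yields the canonical isomorphism $\bar\nu: \Pic(A)\car \Pic(\tgA)$, establishing the first arrow. Here I forget gradings and view $\tgA$ as an ordinary ring, as was done throughout the section.

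Next, for the second isomorphism $\Pic(\tgA)\car \Cl(\tgA)$, I would appeal to the standard fact that for a regular (noetherian) Krull domain, sending an invertible ideal $I\subseteq \tgA$ to the divisor $\sum_P v_P(I)P$ gives a natural isomorphism between the Picard group and the divisor class group (this is \cite[Cor. I.3.8.1]{WeibelK} as already cited). Regularity was the key hypothesis, which Lemma \ref{lem-regular} supplies.

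For the third isomorphism $\Cl(\tgA)\car \cL_{\tgB}/\cL'_{\tgB}$, I would use the map $\Phi_{\tgB}$ constructed from Baba's $p$-radical descent (Proposition \ref{prop-Baba}). Its applicability rests on the fact, established just before the statement, that $(\tgB, \partial_L)$ satisfies $(\heartsuit)$, via Lemma \ref{lem-hypothesis} and Corollary \ref{cor-baseexp}. The map $\Phi_{\tgB}$ takes values in $\cL_{\tgB}/\cL'_{\tgB}$ and is injective on $\ker \overline{\jmath}$, where $\overline{\jmath}:\Cl(\tgA)\to \Cl(\tgB)$ is the natural map. The crucial simplification is that $\tgB = \tell_1[t^{\pm 1}, T]$ is factorial (as a polynomial ring over a PID), so $\Cl(\tgB)=0$ and hence $\ker \overline{\jmath} = \Cl(\tgA)$; thus $\Phi_{\tgB}$ becomes an isomorphism onto the whole logarithmic derivative quotient.

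Finally, the exponent statement is immediate from Lemma \ref{lem-log}: the group $\cL$ has exponent $p^{n(\partial)}$, and by Lemma \ref{lem-hypothesis} we computed $n(\partial)=n$ (since $m'=n$ for the standard higher derivation arising from the extension $\tgk=\tk_1[t^{\pm p^n}]\subseteq\tell_1[t^{\pm 1}]=\tgell$). Hence $\cL_{\tgB}/\cL'_{\tgB}$ has exponent $p^n$, and so does $\Pic(A)$. There is no genuine obstacle remaining at this stage; the proof is a bookkeeping exercise once Lemma \ref{lem-regular} (regularity of $\tgA$) and the verification of $(\heartsuit)$ for $(\tgB,\partial_L)$ are in place, both of which were the substantive inputs.
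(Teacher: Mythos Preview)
Your proposal is correct and follows essentially the same route as the paper: assemble Lemma~\ref{lem-regular} (regularity of $\tgA$), Proposition~\ref{prop-comparisonPic} for $\Pic(A)\simeq\Pic(\tgA)$, the standard identification $\Pic\simeq\Cl$ for regular Krull domains, and the isomorphism $\Phi_{\tgB}$ from Baba's descent together with $\Cl(\tgB)=0$. The only minor imprecision is that the exponent bound applies to $\cL$ for $\partial_L$, so you should note $n(\partial_L)=n(\partial)=n$ (which follows since $\partial_L$ has the same rank and order as $\partial$ by Lemma~\ref{lem-base}).
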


We give a criterion in which the structure of these groups can be
explicitly determined -at least up to a finite number of choices.
To do this, let us write ${\rm deg}_T$ for the monomial degree in
the variable $T$ of an element in $\tgB=\tilde{\ell}_1[t^{\pm
1},T]$ or of an element in $\tgB[S]_m$. Since $\partial_L(T)=T~
{\rm mod}~ (S)$ we have that ${\rm deg}_T(\partial_{L}(T))\geq 1.$

\begin{prop}
Suppose ${\rm deg}_T(\partial_{L}(T))=1$. Then
$\cL_{\tgB}/\cL'_{\tgB}$ is a finite cyclic group of exponent
$p^n$ generated by the class of $\frac{\partial_L(T)}{T}$.
\end{prop}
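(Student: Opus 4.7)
The plan is to combine the isomorphism $\Phi_{\tgB}\colon\Cl(\tgA)\car\cL_{\tgB}/\cL'_{\tgB}$ with the UFD structure of $\tgB=\tilde{\ell}_1[t^{\pm 1},T]$ and Lem.~\ref{lem-log}. The exponent bound is immediate: Lem.~\ref{lem-log} says $\cL$ has exponent $p^n$, so any class in $\cL_{\tgB}/\cL'_{\tgB}$ has order dividing $p^n$, and concretely for our candidate generator $(\partial_L(T)/T)^{p^n}=\partial_L(T^{p^n})/T^{p^n}=1$ since $T^{p^n}=a_0 x\in\tgA$ is fixed by $\partial_L$.

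I would first verify that $\partial_L(T)/T$ defines an element of $\cL_{\tgB}$. Writing $\partial_L(T)=\alpha(S)+\beta(S)T$ under the hypothesis, with $\alpha,\beta\in\tilde{\ell}_1[t^{\pm 1}][S]_m$, the invariance of $y=h(T)\in\tgA$ under $\partial_L$ combined with the separability of the $p$-polynomial $h$ (that is, $b_0\neq 0$) forces $\beta=1$, while $\partial_L(T)^{p^n}=T^{p^n}$ imposes $\alpha^{p^n}=0$; these identities together with the hypothesis pin down $\partial_L(T)$ precisely enough to place $\partial_L(T)/T$ in $(\tgB[S]_m)^\times$, so its class in $\cL_{\tgB}/\cL'_{\tgB}$ is well-defined.

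The main step, and the principal technical obstacle, is generation. By Lem.~\ref{lem-log}, any class in $\cL_{\tgB}/\cL'_{\tgB}$ is represented by $\partial_L(z)/z$ with $z\in\tgB-\{0\}$, and UFD factorization $z=u\cdot T^a\cdot\prod q_i^{e_i}$ with $u\in\tgB^\times$ and primes $q_i\neq T$ gives, by multiplicativity of the logarithmic derivative, $\partial_L(z)/z\equiv(\partial_L(T)/T)^a\cdot\prod(\partial_L(q_i)/q_i)^{e_i}\pmod{\cL'_{\tgB}}$. I would then argue, via $\Phi_{\tgB}$ and the purely inseparable finite cover $\Spec\tgB\to\Spec\tgA$ (which places a unique height-one prime of $\tgB$ over each height-one prime of $\tgA$), that the hypothesis $\deg_T(\partial_L(T))=1$ forces every non-$T$ contribution $\partial_L(q_i)/q_i$ into $\cL'_{\tgB}$, leaving the class of $\partial_L(T)/T$ as the sole nontrivial generator. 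The order being exactly $p^n$, and hence the exponent, then follows by an explicit ramification computation at $P_T=(T)\cap\tgA$: the Russell presentation $y^{p^n}=f(x)$ with separable $f$ ensures that $\Phi_{\tgB}^{-1}$ of the class of $\partial_L(T)/T$ corresponds to a class in $\Cl(\tgA)$ of order exactly $p^n$, which combined with the generation statement yields a finite cyclic group of that exponent.
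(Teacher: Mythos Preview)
Your proposal has genuine gaps in both preliminary steps and in the core generation argument.

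First, your verification that $\partial_L(T)/T\in\cL_{\tgB}$ invokes elements $y$ and $h$ from a Russell-type presentation; but the proposition is stated for an \emph{arbitrary} form $A$ with geometrically reduced graded reduction, where no such presentation is assumed. The paper simply observes that the hypothesis $\deg_T(\partial_L(T))=1$ together with $\partial_L(T)\equiv T\pmod S$ allows one to write $\partial_L(T)=Tw$ with $w\in\tgell[S]_m$ a unit, so $\partial_L(T)/T=w\in(\tgB[S]_m)^\times$.

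Second, and more seriously, the UFD approach to generation does not go through. After factoring $z=u\,T^a\prod q_i^{e_i}$ you assert that each $\partial_L(q_i)/q_i$ lies in $\cL'_{\tgB}$ for primes $q_i\neq T$, appealing vaguely to $\Phi_{\tgB}$ and the cover $\Spec\tgB\to\Spec\tgA$. But the individual factors $\partial_L(q_i)/q_i$ need not lie in $(\tgB[S]_m)^\times$ at all (the truncated polynomial ring $\tgB[S]_m$ is not a domain, so integrality of a product does not descend to factors), and even when they do, there is no mechanism offered to force them into $\cL'_{\tgB}$. Invoking $\Phi_{\tgB}$ here is circular: that map is precisely the isomorphism $\Cl(\tgA)\car\cL_{\tgB}/\cL'_{\tgB}$ whose target you are trying to compute.

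The paper's argument is entirely different and uses the hypothesis in a direct way. Given $h=\partial_L(f)/f\in\cL_{\tgB}$ with $f\in\tgB-\{0\}$, one first observes that $\deg_T(\partial_L(T))=1$ forces $\deg_T(\partial_L(f))=\deg_T(f)$, whence $h\in(\tgell[S]_m)^\times$ has $\deg_T h=0$. Writing $f=\sum_{i,j}a_{ij}t^iT^j$ and $\partial_L(T)=Tw$, one expands $\partial_L(f)=hf$ and compares coefficients of $T^j$ for some $j$ with $\sum_i a_{ij}t^i\neq 0$; this yields $hw^{-j}=\partial(g)/g$ for $g=\sum_i a_{ij}t^i\in\tgell$, hence $hw^{-j}\in\cL_{\tgell}$. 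Since $(\tgell,\partial)$ satisfies $(\heartsuit)$ and $\Cl(\tgk)=0$, Prop.~\ref{prop-Baba} gives $\cL_{\tgell}=\cL'_{\tgell}\subseteq\cL'_{\tgB}$, so $h\equiv w^j\pmod{\cL'_{\tgB}}$. This is the missing idea: reduce to the \emph{coefficient ring} $\tgell$, not to prime factors in $\tgB$.

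Finally, your claim that the order is \emph{exactly} $p^n$ overreads the statement. ``Exponent $p^n$'' here means the exponent divides $p^n$, which is immediate from Lem.~\ref{lem-log}; the paper does not claim, and does not prove, that the group is nontrivial in general.
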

\begin{proof}
We use the description of the group $\cL$ from Lem. \ref{lem-log}.
This implies $\frac{\partial_L(T)}{T}\in\cL_{\tgB}$ since
$\partial_L$ leaves $\tgB$ invariant. Moreover, let $f(T)\in
\tgB-\{0\}$ be given such that
$h:=\frac{\partial_L(f)}{f}\in\cL_{\tgB}.$ Then $
\partial_L(f)=f^{\partial}(\partial_L(T))$ where
$f^{\partial}$ refers to the polynomial in $T$ that arises from
$f$ by applying $\partial$ to its coefficients. Our assumption
implies
$${\rm deg}_T(\partial_L(f))={\rm deg}_T (f^{\partial})={\rm deg}_T(f)$$
where the latter identity holds since $\partial$ is injective.
Consequently,
\[\frac{\partial_L(f)}{f}\in (\tgB [S]_m)^\times \Longrightarrow\frac{\partial_L(f)}{f}\in
(\tgell[S]_m)^\times\] and so ${\rm deg}_T(h)=0$. Write
$f=\sum_{i,j}a_{ij}t^{i}T^j$ with $a_{ij}\in \tell_1$ with
$i\in\bbZ,\hskip2pt j\geq 0$. Write $\partial_{L}(T)=Tw$ with some
$w\in\tgell[S]_m$. Thus, $w=\sum_{j=0,...,m}c_jS^j$ with $c_0=1$
and $c_j\in\tgell$ for $j>0$. Now we compute
\[
\sum_{i,j}a_{ij}(t+S)^{i}T^jw^j=\sum_{i,j}a_{ij}\partial(t)^{i}\partial_L(T)^j=
\partial_L(\sum_{i,j}a_{ij}t^{i}T^j)=h \sum_{i,j}a_{ij}t^{i}T^j.\]
Since $f\neq 0$ there is $j$ such that $a_{ij}\neq 0$. Since
$t,S,T$ are algebraically independent and since ${\rm deg}_T(h)=0$
one may compare the coefficient of $T^j$ on both sides which
yields
\[ w^j\,(\sum_{i}a_{ij}(t+S)^{i})/(\sum_{i}a_{ij}t^{i})=h.\]
Since $w=\partial_L(T)/T\in\cL_{\tgB}$ one has
$hw^{-j}\in\cL_{\tgB}$. Using $K\cap\tgB=\tgell$ we obtain
altogether
\[\sum_{i}a_{ij}(t+S)^{i}/(\sum_{i}a_{ij}t^{i})=\partial(\sum_{i}a_{ij}t^{i})/(\sum_{i}a_{ij}t^{i})\in
(K[S]_m)^\times\cap \tgB[S]_m\subseteq \tgell[S]_m.\] In
particular,
$$hw^{-j}=\partial(\sum_{i}a_{ij}t^{i})/(\sum_{i}a_{ij}t^{i})\in
\cL_{\tgell}$$ where $\cL_{\tgell}$ refers to the integral
logarithmic derivatives associated to the standard derivation
$\partial$. Since the latter satisfies $(\heartsuit)$ and since
$\Cl(\tgell)=0$ we have $\cL_{\tgell}=\cL'_{\tgell}$ (Prop.
\ref{prop-Baba}). Since $\partial_L$ extends $\partial$ the
inclusion $\tgell\subseteq\tgB$ induces
$\cL'_{\tgell}\subseteq\cL'_{\tgB}$. Thus, $h\equiv~w^j~{\rm
mod}~\cL'_{\tgB}$ in $\cL_{\tgB}/\cL'_{\tgB}$. Consequently, the
class of $w$ generates the group $\cL_{\tgB}/\cL'_{\tgB}$.
\end{proof}

We conclude by explaining the condition ${\rm
deg}_T(\partial_L(T))=1$ in the context of forms of Russell type.
For simplicity, we let $A$ be a form of $B$ of Russell type with
parameters $n=1$ and $s=r$ (in the notation of Thm.
\ref{thm-russellgraded}) and which admits a trivialization over
$\tgell$. Up to isomorphism, such a form has graded reduction
$$\tgA=\tgk [r^{-p}T_1,r^{-1}T_2]/(T_2^{p}-f(T_1))$$ where
$f(T_1)=a_0T_1+a_1T_1^p+...+a_mT_1^{p^m}\in\tgk[r^{-p}T_1]$ is a
homogeneous $p$-polynomial of degree $r^{p}$ with $a_0\neq 0$. Let
$x$ resp. $y$ be the residue class of $T_1$ and $T_2$ in $\tgA$
respectively. According to subsection
\ref{subsection-gradedformsofRusselltype} we have
$$
1\otimes a_0x= (1\otimes y-(a_m\otimes x^{p^{m-1}}+...+a_1\otimes
x))^p=: t^p$$ in $\theta^{\langle 1 \rangle}A$ with a homogeneous
element $t$ of degree $r$. We have an isomorphism of graded
$\tgell$-algebras
$$ A_{\tgell}\car \tgell [r^{-1}T], \hskip5pt t\mapsto T.$$ Under
this isomorphism $1\otimes y-(a_m^{1/p}\otimes
x^{p^{m-1}}+...+a^{1/p}_1\otimes x)=T$ according to
(\ref{equ-FrobII}). Our condition comes down to the identity
$$\partial(1)\otimes y-(\partial(a_m^{1/p})\otimes
x^{p^{m-1}}+...+\partial(a^{1/p}_1)\otimes
x))=\partial_L(T)=fT+h$$ for elements $f,h\in\tgell[S]_m$. If this
holds, we have
$$ (1-f)\otimes y -([\partial(a_m^{1/p})-fa_m^{1/p}]\otimes
x^{p^{m-1}}+...+[\partial(a^{1/p}_1)-fa_1^{1/p}]\otimes x))=h.$$
The elements $y,x^{p^{m-1}},...,x,1$ are $\tgk$-linearly
independent in $\tgk [x,y]=\tgA$ and therefore $\tgell
[S]_m$-linearly independent in $\tgell [S]_m \otimes_{\tgk}
\tgA=\tgB [S]_m$. It follows $h=0, f=1$ and
$\partial(a_i^{1/p})=a_i^{1/p}$ for all $i=1,...,m$. In other
words, $a_i$ has a $p$-th root in $\tgk$ for $i=1,...,m$.
Conversely, this condition implies $\partial_L(T)=T$. The
preceding two propositions show in this case that $\Pic(A)=1$ and
that $A$ is a principal ideal domain (cf. Lem.
\ref{lem-Dedekind}).

\appendix

\section{Results from graded commutative algebra}
\label{subsection-somegradedcommutativealgebra}

All rings are assumed to be associative, commutative and unital.
Let $\Gamma$ be a commutative multiplicative group.

\subsection{Generalities}

Our reference for general graded rings is \cite{NastasescuVO} but
our exposition follows \cite{ConradTemkin}, \cite{Ducros} and
\cite{TemkinII}. A ring $A$ is called {\it $\Gamma$-graded} or
simply {\it graded} if it allows a direct sum decomposition as
abelian groups
$$A=\oplus_{\gamma\in\Gamma} A_\gamma$$
such that $A_\gamma \cdot A_\delta\subseteq A_{\gamma\delta}$. The
set $\coprod_{\gamma\in\Gamma} A_\gamma$ is the set of {\it
homogeneous} elements. The decomposition of $A$ yields a function
$$\rho_A: \coprod_\gamma (A_\gamma-\{0\})\rightarrow\Gamma,\hskip8pt a\in
A_\gamma-\{0\}\mapsto\gamma,$$ the {\it grading} of $A$. We have
$1\in A_1$ and $A_1$ is a subring of $A$; if $a\in A$ is
homogeneous and invertible in $A$, then $a^{-1}$ is homogeneous
\cite[Prop. 1.1.1]{NastasescuVO}. We denote by $A^\times$ the
group of invertible homogeneous elements in $A$. Then $\rho$
yields a homomorphism $A^\times\rightarrow\Gamma$.

Any ring $A$ may be viewed a graded ring by letting $A_1=A$ and
$A_\gamma=0$ if $\gamma\neq 1$ (the {\it trivial gradation}). A
{\it graded field} is a graded ring $A$ with $A^\times=A-\{0\}$.
Such a ring need not be a field in the usual sense as we will see
below. On the other hand, every field $k$ is a graded field via
the trivial gradation.

\vskip8pt

If $S\subseteq A$ is a subset of homogeneous elements, then the
smallest graded subring of $A$ containing $S$ is called the graded
subring of $A$ {\it generated} by $S$.

\vskip8pt

An ideal $I\subseteq A$ is called {\it graded} if it is generated
by homogeneous elements or, equivalently, if
$I=\oplus_{\gamma\in\Gamma} (I\cap A_\gamma)$. In this case, the
quotient $A/I=\oplus_{\gamma\in\Gamma} A_\gamma/(I\cap A_\gamma)$
is graded.

\vskip8pt

A ring homomorphism $A\rightarrow B$ between graded rings $A$ and
$B$ is {\it graded of degree} $\delta\in\Gamma$ if it maps
$A_\gamma$ into $B_{\delta\gamma}$ for all $\gamma\in\Gamma$. A
homomorphism of degree $1$ will simply be called {\it graded}. In
this case, we call $B$ a {\it graded algebra} over $A$.

\begin{lemma}\label{lem-extension}
Let $k$ be a graded field. Let $A,B$ be two graded $k$-algebras
with $Im ~\rho_A \subseteq Im~ \rho_k$. Then any $k_1$-algebra
homomorphism $A_1\rightarrow B_1$ extends uniquely to a
homomorphism of graded $k$-algebras $A\rightarrow B$.
\end{lemma}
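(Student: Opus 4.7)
The plan is to exploit the hypothesis $\operatorname{Im}\rho_A \subseteq \operatorname{Im}\rho_k$ to reduce everything to the degree--$1$ parts, which is exactly the content of graded Frobenius reciprocity cited elsewhere in the paper. First I would establish the key structural observation: for every $\gamma \in \Gamma$ with $A_\gamma \neq 0$, one has $\gamma \in \operatorname{Im}\rho_k$, so picking any $c_\gamma \in k_\gamma^{\times}$ (which exists and is invertible because $k$ is a graded field), multiplication by $c_\gamma$ yields a $k_1$-linear bijection $A_1 \xrightarrow{\sim} A_\gamma$. Thus $A_\gamma = c_\gamma A_1$, and symbolically $A \cong k \otimes_{k_1} A_1$ as graded $k$-algebras.

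Once this is in place, \emph{uniqueness} of the extension is immediate: if $\varphi : A \to B$ is a graded $k$-algebra map extending $\varphi_1$, then for any homogeneous $a = c_\gamma a_1 \in A_\gamma$ with $a_1 \in A_1$ one is forced to have
\[
\varphi(a) = c_\gamma \varphi_1(a_1),
\]
and extending by additivity across homogeneous components pins $\varphi$ down completely.

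For \emph{existence} I would simply use the above formula as the definition of $\varphi$ on each $A_\gamma$. The only subtlety is well-definedness: if $c_\gamma a_1 = c'_\gamma a_1'$ with $c_\gamma, c'_\gamma \in k_\gamma^{\times}$, then $u := c_\gamma^{-1} c'_\gamma \in k_1^{\times}$ satisfies $a_1 = u\, a_1'$, so $k_1$-linearity of $\varphi_1$ gives
\[
c_\gamma \varphi_1(a_1) = c_\gamma \varphi_1(u a_1') = c_\gamma u\, \varphi_1(a_1') = c'_\gamma \varphi_1(a_1'),
\]
as required. Extend additively to all of $A$ by $\varphi(\sum_\gamma a_\gamma) = \sum_\gamma \varphi(a_\gamma)$, which lands in the correct homogeneous components of $B$ by construction, so $\varphi$ is graded of degree $1$.

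It remains to verify $\varphi$ is a ring homomorphism, which I would reduce to homogeneous elements: for $a = c_\gamma a_1 \in A_\gamma$ and $a' = c'_\delta a_1' \in A_\delta$, the product $aa' = (c_\gamma c'_\delta)(a_1 a_1')$ lies in $A_{\gamma\delta}$; since $c_\gamma c'_\delta \in k_{\gamma\delta}^{\times}$ is a valid choice of ``basis element'' there, applying the definition of $\varphi$ on $A_{\gamma\delta}$ and using that $\varphi_1$ is a ring homomorphism gives $\varphi(aa') = \varphi(a)\varphi(a')$. $k$-linearity follows from the same computation with $a_1' = 1$, and sending $1$ to $1$ is built in. No step is a serious obstacle; the one place that requires a moment's care is the well-definedness of $\varphi$ on $A_\gamma$, and this is precisely where the hypothesis $\operatorname{Im}\rho_A \subseteq \operatorname{Im}\rho_k$ together with $k_1$-linearity of $\varphi_1$ is used.
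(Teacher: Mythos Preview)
Your proof is correct and follows essentially the same approach as the paper: the paper's argument is the single observation that $A_\gamma = k_\gamma \otimes_{k_1} A_1$ for all $\gamma$, after which it invokes graded Frobenius reciprocity \cite[Thm.~2.5.5]{NastasescuVO} without further comment. You have simply unpacked that reference by hand, choosing a homogeneous unit $c_\gamma \in k_\gamma^\times$ to trivialize each graded piece and then verifying well-definedness and multiplicativity directly; this is exactly what the cited theorem amounts to in this situation.
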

\begin{proof}
Since $A_\gamma= k_\gamma\otimes _{k_1} A_1$ for any
$\gamma\in\Gamma$, this is a simple instance of graded Frobenius
reciprocity \cite[Thm. 2.5.5]{NastasescuVO}. %: for any $\gamma\in
%\rho_k(k^\times)$ let $\lambda_\gamma$ be a nonzero element of
%$k_\gamma$, so that $A=\oplus_{\gamma\in\rho(k^\times)} A_1
%\lambda_\gamma$. If $\phi_1: A_1\rightarrow B_1$ is an algebra
%homomorphism, we put $\phi(\sum_\gamma
%a_\gamma\lambda_\gamma):=\sum \phi_1(a_\gamma)\lambda_\gamma$.
\end{proof}

\vskip8pt

A graded ring $A$ is called {\it reduced} if there is no
homogeneous nonzero nilpotent in $A$. A graded ring $A$ is a {\it
graded domain} if all nonzero homogeneous elements of $A$ are not
zero-divisors in $A$. We remark here that a graded ring which is reduced
(resp. a graded domain) need not be reduced (resp. an integral domain)
in the usual sense. However, this is true if $\Gamma$ can be made
into a totally ordered group (e.g. $\Gamma$ is torsion free
\cite{LeviOrdGroups}), in which case one may argue with
homogeneous parts of highest degree.

\vskip8pt

 A {\it prime} homogeneous ideal is a homogeneous ideal
$P\subseteq A$ such that the graded ring $A/P$ is a domain. Such
an ideal need not be a prime ideal in the usual sense. We denote
the set of such ideals by $\Spec_{\Gamma}(A)$ (the {\it graded
prime spectrum} of $A$ \cite[2.11]{NastasescuVO}). Given such
an ideal $P$ we let $A_P$ be the homogeneous localization, i.e.
the localization at the multiplicative subset of the complement
$A-P$ consisting of homogeneous elements. It is a {\it local}
graded ring with a unique maximal homogeneous ideal. The rings
$A_P$ where $P$ runs through $\Spec_{\Gamma}(A)$ are sometimes
called {\it the} local rings of the graded ring $A$. If $A$ is a
graded domain and $P=(0)$, then $A_P$ is a graded field, the {\it
graded fraction field} of $A$. We denote it by ${\rm
Frac}_\Gamma(A)$.

\vskip8pt

A graded domain is called a {\it graded discrete valuation ring}
if it is a graded principal ideal domain (i.e. every homogeneous
ideal admits a single homogeneous generator) with a unique nonzero
prime homogeneous ideal. The proofs of \cite[I.\S2. Prop. 2 and
3]{SerreL} extend to the graded setting by working only with
homogeneous elements. This implies the next lemma (cf. below for a
discussion of the noetherian property and integral elements in the
graded setting).
\begin{lemma}\label{lem-DVR}
For a graded ring $A$ to be a graded discrete valuation ring it is
necessary and sufficient to be a graded noetherian graded local
ring with a maximal ideal generated by a non-nilpotent homogeneous
element. A graded discrete valuation ring is integrally closed. The local rings of a
graded principal ideal domain are graded
discrete valuation rings.
\end{lemma}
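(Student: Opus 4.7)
My plan is to follow the classical proofs of \cite[I.\S2 Prop.~2 and Prop.~3]{SerreL} closely, and at each step replace "element" by "homogeneous element". The one genuinely new subtlety is that one cannot freely form differences of non-homogeneous elements, so one has to check each time that an expression like $1 - c\pi$ is actually homogeneous; this amounts to a degree-bookkeeping which works out automatically because in a graded local ring the maximal homogeneous ideal consists exactly of the non-unit homogeneous elements.

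For the characterization, the necessity is immediate: a graded principal ideal domain is trivially graded noetherian, a unique nonzero prime homogeneous ideal is automatically the unique maximal homogeneous ideal (so $A$ is graded local), its single homogeneous generator is non-nilpotent by the domain property. For the sufficiency, suppose $A$ is graded noetherian and graded local with maximal homogeneous ideal $\frm=(\pi)$ where $\pi$ is a non-nilpotent homogeneous element. Given a nonzero homogeneous $a\in A$ I would try to write $a=u\pi^n$ with $u$ a homogeneous unit: if $a\in\frm$ then $a=a_1\pi$ with $a_1$ forced to be homogeneous, and if $a_1\in\frm$ then $a_1=a_2\pi$, etc. If no $a_i$ were a homogeneous unit, graded noetherianity would stabilize the chain $(a)\subseteq(a_1)\subseteq(a_2)\subseteq\cdots$, giving $a_{n+1}=ca_n$ for some homogeneous $c$; combining with $a_n=a_{n+1}\pi$ yields $a_n(1-c\pi)=0$. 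Crucially, the relation $a_n=ca_n\pi$ forces $\rho(c)\rho(\pi)=1$, so $c\pi$ is homogeneous of degree $1$ and $1-c\pi$ is a genuine homogeneous element, which is a unit since it lies outside $\frm$. Hence $a_n=0$ and so $a=0$, a contradiction. Thus the chain terminates at a unit. From the representation $a=u\pi^n$ one reads off that $A$ is a graded domain (using $\pi$ non-nilpotent), that every homogeneous ideal is $(\pi^{n_0})$ for the minimum exponent appearing, hence $A$ is a graded PID, and that $(0)$ and $(\pi)$ are the only graded primes.

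For the second assertion, let $K=\mathrm{Frac}_\Gamma(A)$ and define $v\colon K^\times\to\mathbb{Z}$ by $v(u\pi^n)=n$ using the normal form just established; this is a graded valuation. Given a homogeneous $x/y\in K$ (with $x,y$ homogeneous in $A$) integral over $A$ in the graded sense, I would take a monic relation $(x/y)^n+c_{n-1}(x/y)^{n-1}+\cdots+c_0=0$ with $c_i$ homogeneous of the required degree, clear denominators, and apply the standard valuation estimate: if $v(x)<v(y)$ the left-hand side has valuation $nv(x)$ while each right-hand summand $c_iy^{n-i}x^i$ has valuation $\geq (n-i)v(y)+iv(x)>nv(x)$, contradicting the identity. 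Hence $v(x/y)\geq 0$ and $x/y\in A$.

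For the third assertion, let $A$ be a graded principal ideal domain and $P$ a nonzero graded prime, generated by a homogeneous $\pi$. The homogeneous localization $A_P$ is graded local with maximal homogeneous ideal generated by $\pi$, it is graded noetherian as a localization of a graded PID, and $\pi$ remains non-nilpotent there because $A$ is a graded domain and localization at a multiplicative set of non-zero-divisors is injective. The sufficient condition already proved therefore applies, showing that $A_P$ is a graded discrete valuation ring. The main obstacle, as noted, is the first paragraph: one has to make sure that each "Nakayama-type" manipulation produces an element of well-defined homogeneous degree, but this is enforced by the graded structure itself rather than being an extra hypothesis to impose.
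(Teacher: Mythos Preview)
Your proof is correct and follows exactly the approach the paper indicates: the paper's own proof consists of the single sentence ``The proofs of \cite[I.\S2 Prop.~2 and 3]{SerreL} extend to the graded setting by working only with homogeneous elements,'' and you have carried this out in detail, including the degree-bookkeeping verification that $1-c\pi$ is homogeneous of degree $1$ and hence a unit, which is indeed the one point where the graded adaptation requires a moment's thought. Your treatment is more explicit than the paper's but not different in substance.
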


Let $A$ be a graded ring. A {\it graded $A$-module} is an
$A$-module $M$ with a direct sum decomposition as abelian groups
$M=\oplus_{\gamma\in\Gamma} M_\gamma$ such that $A_\delta
M_\gamma\subseteq M_{\delta\gamma}$ for all
$\delta,\gamma\in\Gamma$. For the straightforward notions of
graded morphism of degree $\gamma\in\Gamma$, graded submodule,
graded direct sum or graded tensor product we refer to
\cite{NastasescuVO}.

\vskip8pt

If $\gamma\in\Gamma$ we let $A(\gamma)$ be the graded $A$-module
whose underlying $A$-module equals $A$ and whose gradation is
given by $A(\gamma)_\delta=A_{\gamma\delta}$ for all
$\delta\in\Gamma$. If $M$ is a graded $A$-module and $(m_i)_i$ is
a family of elements of $M$ of respective degrees $\gamma_i$,
there exists a unique graded $A$-linear map $\oplus_i
A(\gamma_i^{-1})\rightarrow M$ of degree $1$ mapping $1\in
A(\gamma_i^{-1})$ to $m_i$ for all $i$. The family $(m_i)_i$ is
called {\it free} (resp. {\it generating} resp. {\it a basis}) if
this map is injective (resp. surjective resp. bijective). The
module $M$ is called {\it of finite type} if there exists a
generating family for $M$ of finite cardinality.

\vskip8pt

Let $k$ be a graded field. A graded $k$-module will be called a
{\it graded $k$-vector space}. Any graded $k$-vector space $M$
admits a basis and different bases have the same cardinality. This
cardinality is called the {\it dimension} of $M$ \cite[Lem.
1.2]{TemkinII}.

\vskip8pt

A graded ring is {\it graded noetherian} if any homogeneous ideal
of $A$ is finitely generated. This is equivalent to the property
that chains of homogeneous ideals in $A$ satisfy the ascending
chain condition. In this case, every graded $A$-submodule of a
finitely generated graded $A$-module is finitely generated.

\subsection{Polynomial extensions}

Let $A$ be a graded ring and $r_1,...,r_n\in\Gamma^n$. We let
$$A[r_1^{-1}T_1,...,r_n^{-1}T_n]$$
be the graded ring whose underlying ring equals the polynomial
ring $A[T_1,...,T_n]$ over $A$ and where for all $s\in\Gamma$ the
homogeneous elements of degree $s$ are given by the polynomials
$\sum_I a_I\bT^I$ where $a_I\in A$ is homogeneous of degree
$s\br^{-I}$. Here, $\bT^{I}=T_1^{i_1}\cdot\cdot\cdot T_n^{i_n}$
and $\br^{-I}=(r_1^{-i_1},...,r_n^{- i_n})\in\Gamma^n$ for
$I=(i_1,...,i_n)\in\bbN^n$. In particular, $T_i$ is homogeneous of
degree $r_i$. One has
$A[r_1^{-1}T_1,...,r_n^{-1}T_n]^\times=A^\times$.

If we want to refer to the usual degree in the variable $T$ of the
polynomial underlying an element $f$ of $A[r^{-1}T]$ we will talk
about the {\it monomial degree} of $f$. The proof of the Hilbert
Basis theorem carries over, so $A[r_1^{-1}T_1,...,r_n^{-1}T_n]$ is
graded noetherian if $A$ is graded noetherian
\cite[\S4]{ConradTemkin}.

\vskip8pt

Let $A$ be a graded domain. A nonzero homogeneous element $a\in
A-A^\times$ is called {\it irreducible} if $a=bc$ with homogeneous
elements $b,c\in A$ implies that $b\in A^\times$ or $c\in
A^\times$. The graded domain $A$ is called {\it factorial} if
every nonzero homogeneous element from $A-A^\times$ is uniquely
-up to rearrangement and up to an element from $A^\times$- a
product of irreducible homogeneous elements in $A$. By the
classical argument, a graded principal ideal domain is factorial.
\begin{lemma}\label{lem-gauss}Let $A$ be a graded domain which is
factorial. Let $r\in\Gamma$. Then $A[r^{-1}T]$ is factorial. A
monic homogeneous element in $A[r^{-1}T]$ is irreducible if and
only if it is irreducible in ${\rm Frac}_\Gamma(A)[r^{-1}T]$.
\end{lemma}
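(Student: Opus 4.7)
The plan is to carry over the classical proof of Gauss's lemma — via the notion of \emph{content} of a polynomial — to the graded setting, working throughout with homogeneous elements only. Set $K := \mathrm{Frac}_\Gamma(A)$. The graded polynomial ring $K[r^{-1}T]$ is a graded principal ideal domain, as one checks by running the usual Euclidean algorithm in the variable $T$ using only the monomial degree and the fact that $K^\times = K - \{0\}$ on homogeneous elements; in particular it is graded factorial.

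The first step is to define the content $c(f) \in A$ of a nonzero homogeneous element $f = \sum_I a_I T^I \in A[r^{-1}T]$ as a homogeneous greatest common divisor of its nonzero coefficients $a_I$ (each $a_I$ is automatically homogeneous, the $T^I$ being homogeneous of pairwise distinct degrees). Call $f$ primitive if $c(f) \in A^\times$. The second and main step is the graded Gauss lemma: the product of two primitive homogeneous elements $f,g$ is primitive. The classical argument transposes directly — if some irreducible homogeneous $p \in A$ divides $c(fg)$, factoriality of $A$ makes $p$ prime among homogeneous elements, so $A/(p)$ is a graded domain, and hence so is $(A/(p))[r^{-1}T]$; reducing $fg$ modulo $p$ gives zero, forcing $f$ or $g$ to reduce to zero, contradicting primitivity. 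From this, factoriality of $A[r^{-1}T]$ follows as usual: write $f = c(f) f_0$ with $f_0$ primitive, factor $c(f)$ in $A$ and $f_0$ in $K[r^{-1}T]$, then rescale the factors in $K[r^{-1}T]$ by elements of $K^\times$ to primitive elements of $A[r^{-1}T]$; Gauss's lemma controls the rescaling up to $A^\times$, giving both existence and uniqueness of the factorization of $f$ into homogeneous irreducibles.

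For the second assertion, a monic homogeneous element $f \in A[r^{-1}T]$ is automatically primitive. A nontrivial factorization $f = gh$ in $K[r^{-1}T]$ can be rescaled by homogeneous elements of $K^\times$ to a relation $f = \lambda\, g_0 h_0$ with $g_0, h_0 \in A[r^{-1}T]$ primitive homogeneous and $\lambda \in K^\times$; Gauss's lemma forces $c(\lambda g_0 h_0) = \lambda$ to lie in $A^\times$, so the factorization is already defined over $A$. The converse is trivial. The main obstacle is purely bookkeeping — one must verify that gcds of homogeneous elements of $A$ can be chosen homogeneous, that an irreducible homogeneous element generates a homogeneous prime ideal of $A$, and that polynomial rings over graded domains are graded domains. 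All three reduce to the factorial hypothesis on $A$ and to the absence of homogeneous zero divisors, so the classical proof transposes verbatim once one is careful to work inside $\coprod_\gamma A_\gamma$ rather than all of $A$.
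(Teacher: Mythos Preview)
Your proposal is correct and follows precisely the route the paper takes: the paper's own proof merely states that one has a graded version of the classical Gauss lemma by working only with nonzero homogeneous elements, and that the remaining arguments from the ungraded case (as in Lang, \emph{Algebra}, Thm.~IV.2.3) then carry over verbatim. You have simply written out in detail what the paper leaves to the reader. One small quibble: your parenthetical justification that the coefficients $a_i$ of a homogeneous $f$ are homogeneous ``the $T^I$ being homogeneous of pairwise distinct degrees'' is not quite the right reason (the degrees $r^i$ need not be pairwise distinct in $\Gamma$); the correct reason is simply the definition of the grading on $A[r^{-1}T]$, which forces $a_i \in A_{sr^{-i}}$ whenever $f$ is homogeneous of degree $s$.
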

\begin{proof}
One has a graded version of the classical Gauss lemma
\cite[Thm. IV.2.1]{LangAlgebra} working only with nonzero
homogeneous elements. One may then copy the arguments from the
ungraded case \cite[Thm. IV.2.3]{LangAlgebra}.
\end{proof}
If $k$ is a graded field, then $k[r^{-1}T]$ is even a principal
ideal domain by \cite[Lemma 4.1]{ConradTemkin}.

\vskip8pt

Let $A\rightarrow B$ be an injective graded homomorphism between
graded rings. If a homogeneous element $b\in B_r$ satisfies
$f(b)=0$ for some monic polynomial $f\in A[T]$ of positive
monomial degree, then one may replace nonzero coefficients of $f$
with suitable nonzero homogeneous parts (depending on $\gamma$) to
find such an $f$ which is homogeneous in $A[r^{-1}T]$. In this
case we call $b$ {\it integral} over $A$. The set of elements of
$B$ whose homogeneous parts are integral over $A$ forms a graded
$A$-subalgebra $A'$ of $B$, the {\it graded integral closure} of
$A$ in $B$. If $A=A'$ we call $A$ {\it integrally closed} in $B$.
If in this situation $A$ is a graded domain and $B$ its graded
fraction field, we call $A$ {\it integrally closed}.

The usual argument with Zorn's lemma using only homogeneous ideals
implies that any homogeneous element $a\in A\setminus A^\times$
lies in a maximal homogeneous ideal. %(\cite[Cor. 1.5]{AtiyahMac})
This implies $A=\cap_{P\in \Spec_\Gamma(A)} A_P$ for a graded
domain $A$. In this case, $A$ is integrally closed if and only if
this holds for each $A_P$.

\subsection{Graded fields}

Let $k$ be a graded field. Let $p$ be the characteristic exponent 
of the field $k_1$.
\vskip8pt

We begin with an example. Let $\rho$ be the grading of $k$ and let
$r\in\Gamma$ whose class in $\Gamma/\rho(k^\times)$ has infinite
order. The homogeneous elements in $k[r^{-1}T]$ are then given by
the monomials $aT^j$ with $a\in k$ homogeneous and $j\in\bbN$:
indeed, a second monomial $bT^j$ of the same degree leads to
$\rho(a)r^j=\rho(b)r^k$ which implies $j=i$ by the choice of $r$. %Indeed, suppose an element f is homogeneous and involves at least two monomials aT^j and bT^k. Since f = sum over all its monomials
%is homogeneous monomials of different gradings must cancel in this sum. Hence we may assume that both elements aT^j and bT^k must have
% the same grading s. then s=\rho(a)r^j=\rho(b)r^k and therefore r^{k-j}\in \rho(k^\times) which is a contradictio to the choice of r,
The graded fraction field of $k[r^{-1}T]$ is therefore obtained by
(homogeneous) localization at the single element $T$. We therefore
have
\begin{numequation}\label{equ-gradedfractionfield} k[r^{-1}T,rT^{-1}]:={\rm
Frac}_\Gamma(k[r^{-1}T])=k[r^{-1}T,rS]/(TS-1)\end{numequation} in
case $r$ has infinite order in $\Gamma/\rho(k^\times)$. The
underlying ring of $k[r^{-1}T,rT^{-1}]$ equals the Laurent
polynomials over $k$ in the variable $T$ and so it is not a field
in the usual sense. The homogeneous elements of
$k[r^{-1}T,rT^{-1}]$ are given by $aT^j$ with $a\in k$ homogeneous
and $j\in\bbZ$.

\vskip8pt

A graded homomorphism $k\rightarrow\ell$ between graded fields is
injective and called an {\it extension of graded fields}. If
$x\in\ell^\times$ of degree $r\in\Gamma$, then the graded
homomorphism $$k[r^{-1}T]\rightarrow\ell, \hskip8pt T\mapsto x$$
has a homogeneous kernel $I_x$. If $I_x=0$ resp. $I_x\neq 0$ the
element $x$ is called {\it transcendental} resp. {\it algebraic}
over $k$. The latter case always occurs in case of a finite
extension $k\subseteq\ell$. In the algebraic case we call the
unique monic homogeneous generator $f_x$ of $I_x$ the {\it minimal
homogeneous polynomial} of $x$ over $k$.

The extension $k\subseteq\ell$ is called {\it normal} if $f_x$
splits into a product of homogeneous polynomials in
$\ell[r^{-1}T]$ of monomial degree $1$ for every $x\in\ell^\times$
where $r=\rho_\ell(x)$. The classical argument involving Zorn's
lemma applies to the graded setting and yields a {\it graded
algebraic closure} $\bar{k}$ of $k$, unique up to $k$-isomorphism.
If $f_x$ has only simple roots in $\bar{k}$ then $f_x$ and $x$ are
called {\it separable}. If $f_x$ is of the form $f_x(T)=T^{p^n}-c$
with $c\in k^\times$, we call $f_x$ purely inseparable and $x$
purely inseparable of degree $n$ over $k$. The union $k^s$ of all
separable elements of $\bar{k}$ (together with the zero element)
is called the {\it graded separable closure of $k$ in $\bar{k}$}.
The union $k^{i}$ of all purely inseparable elements (together
with the zero element) is called the {\it graded purely
inseparable closure of $k$ in $\bar{k}$}. Both are graded fields
and one has $k^s\cap k^{i}=k$.

\vskip8pt

 A finite extension $k\subseteq\ell$ which is normal and
separable is called {\it Galois}. In this case, the group of
graded automorphisms of $\ell$ fixing $k$ pointwise is denoted by
$\Gal(\ell/k)$. There is a graded version of the main theorem of
Galois theory \cite[(1.16.1)]{Ducros}. In particular,
$\#\Gal(\ell/k)=[\ell:k]$ and $\ell^{\Gal(\ell/k)}=k$.

%An arbitrary extension $k\subseteq\ell$ is called {\it separable}
%if $\ell\otimes_{k}\bar{k}$ is reduced\footnote{ A graded ring is
%{\it reduced} if any homogeneous nilpotent element is zero.
%Considering terms of highest degrees one sees that such a ring is
%reduced in the usual sense.}. As in the classical case one shows
%that this restricts to the property above in case of algebraic
%extensions.

\vskip8pt

\vskip8pt

Let $k\subseteq\ell$ be an extension of graded fields and
$S\subseteq\ell$ a subset. We let $k(S)$ be the smallest graded
subfield of $\ell$ that contains $k$ and $S$. It equals the graded
fraction field of the graded subring of $\ell$ generated by $k$
and $S$. If $\ell=k(S)$ we say that $\ell$ is {\it generated by
$S$ over $k$}.

\vskip8pt

Let $k\subseteq\ell$ be an extension of graded fields. A subset
$S\subseteq\ell^\times$ is said to be {\it algebraically
independent over $k$} if the graded $k$-algebra homomorphism
$k[r_1^{-1}T_1,...,r_n^{-1}T_n]\rightarrow\ell$ defined by
$T_i\mapsto s_i$ is injective for any finite subset $s_1,...,s_n$
of $S$ (with $r_i=\rho_\ell(s_i)$). A subset $S$ which is a
maximal algebraically independent set over $k$ is called a {\it
transcendence basis for $\ell/k$}. As in the classical case, one
shows that a transcendence basis always exists and that all such
bases have the same cardinality, the {\it transcendence degree}
${\rm trdeg}_k (\ell)$ of the extension $\ell/k$
\cite[\S4]{ConradTemkin}. Similarly, if the graded field $\ell$
is generated over $k$ by a set $S\subseteq\ell$, one may
choose a transcendence basis from $S$. %If in this situation
%$\ell/k$ is separable, one may choose a {\it separating
%transcendence basis} $S'$ from $S$, i.e. such that $\ell/k(S')$ is
%separable (and algebraic).
We shall need a graded version on the existence of a {\it
separating} transcendence basis in the following sense.
\begin{lemma}\label{lem-septransbasis}
Suppose $k\subseteq\ell$ is an extension of graded fields such that
the graded ring $k^{i}\otimes_k\ell$ is reduced. If $S\subseteq
\ell$ generates $\ell$ over $k$, there is a subset $S'\subseteq S$
with the properties: {\rm (i)} $S'$ is a transcendence basis of
$\ell/k$, {\rm (ii)} the algebraic extension $k(S')\subseteq\ell$
is separable.
\end{lemma}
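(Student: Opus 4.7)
The plan is to adapt the classical proof of MacLane's theorem on separating transcendence bases (cf.\ Lang, \emph{Algebra}, VIII.4.2) to the graded setting, using the graded Gauss lemma (Lem.\ \ref{lem-gauss}) and the graded purely inseparable closure $k^i \subseteq \bar k$ constructed in the appendix. If the characteristic exponent $p$ of $k_1$ equals $1$, every graded algebraic extension of $k$ is separable by definition, and any transcendence basis $S' \subseteq S$ (produced by Zorn's lemma applied to graded-algebraically-independent subsets of $S$) satisfies (i) and (ii). Assume henceforth $p > 0$; since $k^{p^{-1}} \subseteq k^i$ is $k$-flat (every graded module over a graded field is graded-free), the hypothesis that $k^i \otimes_k \ell$ is reduced passes to $k^{p^{-1}} \otimes_k \ell$.

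The heart of the argument is the following \emph{swap claim}: if $T \subseteq S$ is a transcendence basis of $\ell/k$ and $s \in S$ is algebraic but inseparable over $k(T)$, then there exists $t_j \in T$ such that (a) $T' := (T \setminus \{t_j\}) \cup \{s\}$ is again a transcendence basis, (b) $t_j$ is separably algebraic over $k(T')$, and (c) every element of $\ell$ separably algebraic over $k(T)$ remains so over $k(T')$. To prove it, let $t_1, \dots, t_r \in T$ be the elements appearing in the minimal polynomial of $s$ over $k(T)$, and via the graded Gauss lemma and the graded factoriality of the graded polynomial ring over $k$ (so that height-one primes are principal) select the homogeneous irreducible polynomial $F(X_1, \dots, X_r, X)$ generating the kernel of the evaluation $X_i \mapsto t_i$, $X \mapsto s$, the variables being given the natural degrees matching those of the $t_i$ and $s$. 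Suppose every monomial of $F$ raises each variable only to a $p$-th power; then inside the analogous polynomial ring over $k^{p^{-1}}$ one may extract $p$-th roots of the coefficients of $F$ to write $F = H^p$ with $H$ homogeneous of strictly smaller total monomial degree. Fix a homogeneous $k$-basis $\{c_i\}$ of $k^{p^{-1}}$ and decompose $H = \sum_i c_i H_i$ with each $H_i$ a homogeneous polynomial over $k$; evaluating at $(t_1, \dots, t_r, s)$ in the reduced ring $k^{p^{-1}} \otimes_k \ell$ gives $H(t_1, \dots, t_r, s)^p = F(t_1, \dots, t_r, s) = 0$, hence $H(t_1, \dots, t_r, s) = 0$, and $k$-linear independence of $\{c_i\}$ forces $H_i(t_1, \dots, t_r, s) = 0$ for every $i$; some nonzero $H_i$ (existing since $H \neq 0$) then yields a relation of strictly smaller total monomial degree than $F$, contradicting that $F$ generates the kernel. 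Hence some variable appears in $F$ to a non-$p$-power, and the inseparability of $s$ over $k(T)$ forces this to be some $X_j$ with $j \leq r$, giving (a) and (b) via a standard transcendence-degree count. Clause (c) follows because $k(T) = k(T', t_j)$ is separable algebraic over $k(T')$, so that separability of $u \in \ell$ over $k(T)$ propagates to separability over $k(T')$ by transitivity of graded separable algebraic extensions (\cite[1.14]{Ducros}).

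Granted the swap claim, the lemma is deduced via a Zorn argument: start with any transcendence basis $T_0 \subseteq S$ of $\ell/k$ and consider the collection of subsets $V \subseteq S$ admitting a transcendence basis $T \subseteq V$ of $\ell/k$ (obtained from $T_0$ by iterated swaps) with every element of $V$ separably algebraic over $k(T)$, ordered by inclusion of $V$. The stability clause (c) ensures that along a chain the successive swaps preserve the separability condition, so unions remain in the collection, and a maximal $V$ must equal $S$: otherwise, any $s \in S \setminus V$ is inseparable over $k(T)$ (if separable we enlarge $V$ directly), and the swap claim applied to $s$ produces a strictly larger $V$. Taking $S' := T$ for this maximal pair yields the desired separating transcendence basis. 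The main obstacle will be the careful degree bookkeeping in the graded setting for the $p$-th root extraction—tracking that the grading on $k^{p^{-1}}$ extends that of $k$ via $p$-th roots in the divisible group $\Gamma = \mathbb{R}_+^\times$, so that $H$ and the $H_i$ are honest homogeneous polynomials—together with the verification that the Zorn step respects stability; in the only case used in the paper, $\ell/k$ has finite transcendence degree and $S$ may be taken finite, so iteration of the swap claim terminates in finitely many steps.
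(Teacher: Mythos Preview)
Your swap claim and its proof are correct and constitute exactly the graded adaptation of Lang's Proposition VIII.4.1 that the paper has in mind; the paper's own proof is a two-line sketch pointing to Lang and to the graded Gauss lemma, so in the finitely-generated case your argument is the same approach, just written out in full detail.

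The Zorn argument you propose for infinite $S$, however, does not work, and in fact cannot be repaired: the lemma as literally stated (with no finiteness hypothesis on $S$) is false. For a counterexample take $k$ to be a perfect field of characteristic $p>0$ with the trivial gradation and $\ell=k(x^{1/p^\infty})=\bigcup_n k(x^{1/p^n})$. Then $k^{i}=k$, so the hypothesis $k^{i}\otimes_k\ell$ reduced is vacuous, yet $\ell/k$ has transcendence degree $1$ and for any single $t\in\ell$ one has $t\in k(x^{1/p^n})$ for some $n$, whence $x^{1/p^{n+1}}$ is purely inseparable over $k(t)$; thus no separating transcendence basis exists. Concretely, your Zorn step fails at the union: given a chain $(V_\alpha)$, each $V_\alpha$ comes with its own $T_\alpha$ produced by a different finite sequence of swaps from $T_0$, and there is no mechanism to produce a single $T$ for $\bigcup_\alpha V_\alpha$ from these---``iterated swaps'' has no transfinite meaning here, and clause (c) only transports separability across a single swap, not across a limit.

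You already observe that the paper only applies the lemma with $S=\{y_1,\dots,y_n\}$ finite, in which case finitely many swaps suffice and your argument is complete. So the substance of your proof is fine; just drop the Zorn paragraph and, if you want a correct general statement, add the hypothesis that $S$ is finite (equivalently that $\ell/k$ is finitely generated), which is also the hypothesis in Lang.
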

\begin{proof}
Let $r$ and $r_1,...,r_n$ be elements of $\Gamma$. The graded ring
$k[r_i^{-1}T]$ is a principal ideal domain and therefore
factorial. Hence $A:=k[r_1^{-1}T_1,...,r_n^{-1}T_n]$ is factorial
according to Lem. \ref{lem-gauss}. The latter lemma also implies
that a monic homogeneous polynomial in $A[r^{-1}T]$ is irreducible
if and only if it is irreducible in ${\rm
Frac}_\Gamma(A)[r^{-1}T]$. We then have everything to extend the
classical proof \cite[Prop. VIII.4.1]{LangAlgebra} on the
existence of a separating transcendence basis for separable field
extensions to the graded setting by working with homogeneous
elements only.
\end{proof}
\begin{lemma}\label{lem-trdeg}
If $K$ denotes the graded fraction field of $k[r^{-1}T]$, then
${\rm trdeg}_k(K)=1$.
\end{lemma}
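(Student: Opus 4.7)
The plan is to exhibit $\{T\}$ as a transcendence basis of $K$ over $k$, which immediately yields $\mathrm{trdeg}_k(K)=1$. Two items must be checked: algebraic independence of $\{T\}$, and the fact that $K$ is generated over $k$ by this single element as a graded field.

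First, I would verify algebraic independence. By construction, the graded $k$-algebra homomorphism $k[r^{-1}T']\to K$ defined by $T'\mapsto T$ (for an independent homogeneous indeterminate $T'$ of degree $r$) factors through the canonical inclusion $k[r^{-1}T]\hookrightarrow K$ of the graded ring into its graded fraction field. The composite is injective, so by the definition of algebraic independence given just before the statement, the singleton $\{T\}$ is algebraically independent over $k$.

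Second, $K$ is by definition the homogeneous localization of $k[r^{-1}T]$ at the set of all nonzero homogeneous elements, so $K$ equals the smallest graded subfield of itself containing $k$ and $T$; that is, $K = k(T)$ in the graded sense. Applying the fact recalled in the text just before the lemma, that a transcendence basis of an extension $\ell/k$ generated by a set $S$ can be chosen from $S$, we extract a transcendence basis from $\{T\}$. This basis is either $\emptyset$ or $\{T\}$, and since step one rules out the former, it must be $\{T\}$. Therefore $\mathrm{trdeg}_k(K) = 1$.

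There is no real obstacle here; the argument is a direct unwinding of the definitions together with the two preparatory facts on transcendence bases. The only mild subtlety worth flagging is that, as the discussion around equation \eqref{equ-gradedfractionfield} shows, the underlying ungraded ring of $K$ is genuinely different in the cases where the class of $r$ has finite versus infinite order in $\Gamma/\rho(k^\times)$, but this distinction plays no role: in both cases $k[r^{-1}T]$ is generated as a graded $k$-algebra by $T$, so $K$ is generated as a graded field by $T$, and that is all we need.
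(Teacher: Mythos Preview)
Your proof is correct and takes a genuinely different route from the paper. You directly exhibit $\{T\}$ as a transcendence basis by unwinding the definitions: algebraic independence is immediate from the injectivity of $k[r^{-1}T]\hookrightarrow K$, and since $K=k(T)$ in the graded sense, the fact (recalled just before the lemma) that a transcendence basis can be extracted from any generating set forces this basis to be $\{T\}$ rather than $\emptyset$. This is entirely self-contained within the appendix.

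The paper instead invokes the structural formula from \cite[Lemma 4.8]{ConradTemkin},
\[
\mathrm{trdeg}_k(K)=\mathrm{trdeg}_{k_1}(K_1)+\dim_{\bbQ}\bigl((\rho(K^\times)/\rho(k^\times))\otimes_{\bbZ}\bbQ\bigr),
\]
and then computes each summand via a case split on whether the class of $r$ in $\Gamma/\rho(k^\times)$ has finite or infinite order, reducing in the finite case (after a finite extension) to the classical fact that $k_1(T)$ has transcendence degree $1$ over $k_1$, and in the infinite case reading off both terms from the explicit description \eqref{equ-gradedfractionfield}. Your argument is shorter and avoids the external reference; the paper's argument, while heavier, has the merit of making visible how the graded transcendence degree splits into a ``residue field'' contribution and a ``value group'' contribution, which is thematically in line with the rest of the paper. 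Your final remark that the case distinction is irrelevant here is exactly right.
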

\begin{proof}
According to \cite[Lemma 4.8]{ConradTemkin} we have the formula
$$ {\rm trdeg}_k(K)={\rm trdeg}_{k_1}(K_1)+{\rm
dim}_{\bbQ}((\rho(K^\times)/\rho(k^\times))\otimes_{\bbZ}\bbQ).$$
Suppose first that the image of $r$ has finite order in
$\Gamma/\rho(k^\times)$. There is a finite extension $k\subseteq
\ell$ with $r=\rho(a)$ for some $a\in\ell^\times$. We then have an
isomorphism $\ell[r^{-1}T]\car \ell[T]$ of graded $\ell$-algebras
by mapping $T\mapsto aT$. Since ${\rm trdeg}_k(K)={\rm trdeg}_\ell
(K(\ell))$ and since $K(\ell)={\rm Frac}_\Gamma(\ell[r^{-1}T])$,
we may therefore assume $r=1$. But then $K_1=k_1(T)$, the usual
field of rational functions over $k_1$ and we are done. In the
case, where the image of $r$ has infinite order in
$\Gamma/\rho(k^\times)$, we have $K=k[r^{-1}T,rT^{-1}]$ by
(\ref{equ-gradedfractionfield}). Hence $K_1=k_1$. Since
$(\rho(K^\times)/\rho(k^\times))\otimes_{\bbZ}\bbQ$ is the line
generated by the image of $r$, the assertion follows in this case,
too.
\end{proof}

\vskip8pt

\bibliographystyle{plain}
\bibliography{mybib}

\end{document}